\def\BState{\State\hskip-\ALG@thistlm}
\def\BStatex{\Statex\hskip-\parindent \hskip2ex}
\newtheorem{theorem}{Theorem}
\newtheorem{definition}{Definition}
\newtheorem{proposition}{Proposition}
\newtheorem{remark}{Remark}
\newtheorem{condition}{Condition}
\newtheorem{lemma}{Lemma}
\newtheorem{corollary}{Corollary}
\newcommand{\subdiv}{S}
\newcommand{\subdivset}{\mathcal{S}}
\newcommand{\clos}{F}
\newcommand{\epi}{\text{epi}}
\newcommand{\hull}{\text{hull}}
\newcommand{\interpset}{\mathcal{I}}
\newcommand{\ineqset}{\mathcal{C}}
\newcommand{\strictineqset}{\overset{\bullet}{\ineqset}}
\newcommand{\Hilb}{\mathcal{H}}
\newcommand{\extfun}{P_{\clos \to [0,1]^d}}
\newcommand{\multiset}{\mathcal{L}_\subdiv}
\newcommand{\multi}{\underline{\ell}}
\newcommand{\multiknot}{t^{(\subdiv)}_{\multi}}
\newcommand{\multiknotp}{t^{(\subdiv)}_{\multi'}}
\newcommand{\R}{\mathbb{R}}
\newcommand{\domain}{[0, 1]^d}
\newcommand{\domainD}{[0, 1]^D}
\newcommand{\prodF}[1]{\clos^{1:#1}}
\newcommand{\activeset}{\mathcal{J}}
\newcommand{\addknot}[1]{\, \cup_{#1} \,}
\newcommand{\addvar}[1]{\, + \,}
\author{}
\title{Convergence}
\definecolor{green}{rgb}{0, 0.5, 0}
\begin{document}

\title{Sequential construction and dimension reduction of Gaussian processes under constraints}

\author{Fran\c{c}ois Bachoc$^\ast$, Andr\'es F. L\'opez Lopera$^{\dagger}$ and Olivier Roustant$^\ddagger$\\
${}^{\ast}$Inst. de Math\'ematiques de Toulouse (IMT), UMR5219 CNRS, 31062 Toulouse, France \\
${}^{\dagger}$Univ. Polytechnique Hauts-de-France, C\'eramaths, F-59313 Valenciennes, France \\
${}^{\ddagger}$IMT, UMR5219 CNRS, INSA, 31077 Toulouse c\'edex 4, France
}

\maketitle

\abstract{
		Accounting for inequality constraints, such as boundedness, monotonicity or convexity, is challenging when modeling costly-to-evaluate black box functions. In this regard, finite-dimensional Gaussian process (GP) regression models bring a valuable solution, as they guarantee that the inequality constraints are satisfied everywhere. Nevertheless, these models are currently restricted to small dimensional situations (up to dimension $5$). Addressing this issue, we introduce the MaxMod algorithm that sequentially inserts one-dimensional knots or adds active variables, thereby performing at the same time dimension reduction and efficient knot allocation. We prove the convergence of this algorithm. In intermediary steps of the proof, we propose the notion of multi-affine extension and study its properties. We also prove the convergence of finite-dimensional GPs, when the knots are not dense in the input space, extending the recent literature. With simulated and real data, we demonstrate that the MaxMod algorithm remains efficient in higher dimension (at least in dimension $20$), and needs fewer knots than other constrained GP models from the state-of-the-art, to reach a given approximation error.
}

\section{Introduction} \label{section:intro}

Gaussian processes (GPs) are widely used to address diverse applications since they form a flexible prior over functions \cite{Rasmussen2005GP,stein1999interpolation}. They have been successfully applied in research fields such as numerical code approximations \cite{Sacks89Design}, global optimization \cite{Jones1998EGO}, model calibration \cite{kennedy2001bayesian},
geostatistics \cite{chiles2009geostatistics,mu2019intrinsic} and machine learning \cite{Rasmussen2005GP}. 

It is known that accounting for inequality constraints (e.g. positivity, monotonicity, convexity) in GPs leads to smaller prediction errors and to more realistic uncertainties \cite{DaVeiga2012GPineqconst,DaVeiga2020GPineqconst,Golchi2015MonotoneEmulation,LopezLopera2017FiniteGPlinear,maatouk2017gaussian,Pallavi2019BayesianShapeGPs,Riihimaki2010GPwithMonotonicity}. 
These inequality constraints correspond to available information on functions over which GP priors are considered. They are encountered in diverse research fields such as social system analysis \cite{Riihimaki2010GPwithMonotonicity}, computer networking \cite{Golchi2015MonotoneEmulation}, econometrics \cite{Cousin2016KrigingFinancial}, geostatistics \cite{maatouk2017gaussian}, nuclear safety criticality assessment \cite{LopezLopera2017FiniteGPlinear}, tree distributions \cite{LopezLopera2019GPCox}, coastal flooding \cite{LopezLopera2019lineqGPNoise}, and nuclear physics \cite{Zhou2019ProtonConstrGPs}. Note also that, beyond GPs, regression methods that account for these inequality constraints, also called shape constraints, are acknowledged as an important need in statistics and machine learning \cite{bellec2018sharp,durot2002sharp,durot2018limit,groeneboom2014nonparametric,groeneboom2001estimation,hornung1978monotone,lin2014bayesian}.

Among the existing approaches which enable us to impose inequality constraints to GP models, we focus on those based on the approximation of GP samples in finite-dimensional spaces of functions such as piecewise linear functions \cite{Bachoc2010cMLE,Cousin2016KrigingFinancial,LopezLopera2017FiniteGPlinear,maatouk2017gaussian,Zhou2019ProtonConstrGPs}.
Indeed, the main benefit of these approaches is that they guarantee the inequality constraints to be satisfied everywhere in the input space. For instance, in the case of boundedness constraints, this means that the realizations from the posterior distribution of a constrained GP model obtained from these approaches are above or below the prescribed bounds everywhere in the input space. In contrast, realizations from the posterior distributions obtained from several other GP-based approaches are guaranteed to be above or below the prescribed bounds only at a limited number of selected input points (see, e.g., \cite{DaVeiga2012GPineqconst,DaVeiga2020GPineqconst} and the work in \cite{Riihimaki2010GPwithMonotonicity} accounting for monotonicity constraints). Hence, the approaches based on the approximation of GP samples in finite-dimensional spaces take into account the full information of the inequality constraints. 
Furthermore, in practice, domain experts may consider that a statistical model on a black box function is more trustworthy and physically interpretable if it respects known inequality constraints everywhere in the input space. For instance, in the application of Section \ref{section:numerical:experiments:subsec:BRGM}, a prediction of the flooded area is expected to be non-negative and non-decreasing with respect to the tide and surge inputs.
From this point of view, the approximation of GP samples in finite-dimensional spaces is one of the few admissible methods.

Nevertheless, the main drawback of approximating GP samples in finite-dimen\-sional spaces is the scalability to high-dimensional input spaces. Indeed, the finite-dimensional spaces rely on basis functions, each of them being centered at a $D$-dimensional knot, with $D$ being the input space dimension.
These $D$-dimensional knots need to be obtained from the tensorization of $D$ sets of one-dimensional knots, in order to satisfy the constraints everywhere. According to the state-of-the-art, for instance \cite{LopezLopera2019lineqGPNoise,LopezLopera2017FiniteGPlinear,maatouk2017gaussian}, the sets of knots are fixed a priori for each of the $D$ inputs. This limits the applicability to small dimension, say, $3$ to $5$. 

In this paper, we overcome this limitation, in situations where the dimension $D$ is allowed to be significantly larger (for instance $D = 20$ in Section \ref{section:numerical:experiments}), but where there are many irrelevant input variables, or, in other words, the effective dimension is small. We suggest a sequential procedure for knot insertion and variable selection that is scalable to these situations of higher dimensional input spaces with irrelevant variables. The procedure leverages three important intuitive principles. First, there should be a higher concentration of knots in input regions where the function is varying most. Second, the most influential variables should be allocated the most knots. Third, weakly influential variables should be allocated one-dimensional knots with the least priority. 

Let us now describe the sequential procedure.
Consider a set of $n$ input points and $n$ corresponding observations of the function of interest, to be interpolated, as well as given inequality constraints.
We start with a coarse finite-dimensional GP model based on few active variables and small sets of one-dimensional knots for them. Then, at each step, we either add a new active variable or insert a one-dimensional knot to a variable that is already active. The new variable or new knot is the one that corresponds to the largest modification, in $L^2$ norm, of the maximum a posteriori (MAP) function, also called the mode, of the constrained GP model. For this reason the suggested sequential procedure is called the MaxMod (maximum modification) algorithm, hereafter called MaxMod. 
The MAP is the most probable function that interpolates the observations and that satisfies the inequality constraints everywhere, according to the finite-dimensional constrained GP model. As shown in \cite{maatouk2017gaussian}, computing the MAP yields a convex optimization problem of moderate complexity. Here, we provide a computationally simple expression of the subsequent $L^2$ norm (in Appendix \ref{subsection:computing:Ldeux:difference:modes}), resulting in a computational complexity that is linear in the number of multi-dimensional knots. We allow for free locations of one-dimensional knots by using asymmetric hat basis functions, instead of the symmetric ones investigated in \cite{LopezLopera2017FiniteGPlinear,maatouk2017gaussian}.
The sequential procedure also naturally incorporates a penalization for adding new variables, or for inserting one-dimensional knots, that overly increase the total number of multi-dimensional knots.

From the point of view of free knot insertion in spline approximation, MaxMod differs from many existing references
\cite{Creutzig2007FreeKnotSplineStoch,DeBoor2001GuideSplines,DeBoor2002SplineBasics,Goldman2003BSplineApprox,Yingkang1993AlgoSplinesFreeKnots,Jupp1978ApproxDataSplinesFreeKnots,Kobbelt2002MultiResTechniques,Slassi2014OptFreeKnotSplineStochDiffEq}
(that typically address spline approximation independently of GPs and inequality constraints).
Indeed, these references are based on directly evaluating and minimizing the approximation error of a target function and thus rely on multiple evaluations of this function. In contrast, MaxMod is adapted to the situation where evaluations of the target function are scarce, and it simply maximizes the difference between successive spline approximations.

We provide a convergence guarantee for MaxMod. We consider the set of $n$ input points and function observations to be fixed and we let the number of iterations go to infinity. This corresponds to increasing the computational budget, as measured by the number of multi-dimensional knots. Then, we show that all the variables are eventually activated and that the set of $D$-dimensional knots becomes dense in the input space. This implies, based on \cite{bay2016generalization,bay2017new}, that the MAP function obtained from MaxMod converges to the optimal constrained interpolant function in the reproducing kernel Hilbert space (RKHS, see for instance \cite{berlinet2011reproducing}) of the covariance function of the GP model. Hence, the convergence result states that MaxMod, despite being a sequential procedure, becomes globally efficient as the number of iterations increases. In particular, loosely speaking, the procedure does not fall into an undesirable local pattern, where the inserted knots would cluster and would not eventually cover the whole input space. It is, in general, important to ensure that sequential procedures avoid undesirable local patterns \cite{bect2019supermartingale,ben2017universal}.

In order to obtain the convergence result, we extend the results of \cite{bay2016generalization,bay2017new}. These results show that, given a dense sequence of multi-dimensional knots,
the MAP of a constrained finite-dimensional GP converges to the above discussed optimal constrained interpolant function in the RKHS of the GP covariance function. The extension tackles the case where the multi-dimensional knots are not dense. Based on the subset $\clos$ of the input space corresponding to their closure, we define a transformation that we call the multiaffine extension, that extends a function defined on $\clos$ to the entire input space. This extension enables us to define an optimal constrained interpolant function based on a new RKHS restricted to $\clos$. Then, we show that the MAP converges to this optimal constrained interpolant function, thus extending \cite{bay2016generalization,bay2017new} to any sequence of multi-dimensional knots, not necessarily dense on the entire input space. The construction and properties of the multiaffine extension and this extension of \cite{bay2016generalization,bay2017new} may be of independent interest. Also, this general proof scheme for the convergence of MaxMod may be adapted to the convergence of other algorithms based on hat basis functions.

The benefit of the suggested sequential procedure is shown in a series of numerical experiments,  provided in Section \ref{section:numerical:experiments}, with simulated and real-world data. For the latter, data come from a coastal flooding application (see \cite{Azzimonti2018CoastalFlooding,LopezLopera2019lineqGPNoise}) satisfying both positivity and monotonicity constraints. We test the versatility of MaxMod for efficiently inserting knots or adding active dimensions while reducing the approximation error of the resulting constrained GP. We demonstrate that MaxMod remains tractable and yields a constrained GP model with accurate predictions, even up to the dimension $D=20$, for which the state-of-the-art procedures either are intractable \cite{LopezLopera2017FiniteGPlinear,maatouk2017gaussian} or do not satisfy the constraints everywhere \cite{DaVeiga2020GPineqconst,Riihimaki2010GPwithMonotonicity}. Even in smaller dimension, when the procedures of \cite{LopezLopera2017FiniteGPlinear,maatouk2017gaussian} are tractable, MaxMod typically needs fewer knots to achieve a comparable approximation error. We also show the benefit of having fewer knots when subsequently computing confidence intervals from constrained GP models.

This paper is organized as follows. In Section \ref{sec:finite:dimensional:constrained:GP}, we describe the finite-dimensional GP approach proposed in \cite{LopezLopera2017FiniteGPlinear,maatouk2017gaussian}, that we adapt to the case where only a subset of the $D$ variables is active. In Section  \ref{section:MaxMod}, we introduce MaxMod. In Section \ref{section:convergence}, we present the multiaffine extension and establish the various convergence results. 
The numerical
experiments are carried out in Section \ref{section:numerical:experiments}.
Section \ref{section:conclusion} concludes the paper. 
Finally, in the appendix, we provide technical developments, some of the technical conditions, and all the proofs of the paper.

\section{Finite-dimensional constrained Gaussian processes} \label{sec:finite:dimensional:constrained:GP}

For convenience, we have summarized the notations of Sections   \ref{sec:finite:dimensional:constrained:GP} and \ref{section:MaxMod} in Table \ref{table:list:of:symbols}, located at the end of Section \ref{section:MaxMod}.

\subsection{Basis function decomposition}

The principle of finite-dimensional constrained GPs is to consider linear combinations of basis functions which are tensorizations of one-dimensional asymmetric hat basis functions. These one-dimensional basis functions are parametrized by $-\infty < u <v <w < +\infty$, and are written $\phi_{u,v,w} : \R \to \R$, defined by,  \label{page:phi:u:v:w}
\[
\phi_{u,v,w}(t)
= 
\begin{cases}
	\frac{1}{v-u}( t-u  )
	& \text{for  $u \leq t \leq v$} \\
	\frac{1}{w-v}( w-t  )
	& \text{for  $v \leq t \leq w$} \\
	0 & \text{for $t \not \in [u,w]$},
\end{cases}
\]
for $t \in \R$.
Clearly, $\phi_{u,v,w} $ is a `hat' function centered at $v$ and with support $[u,w]$.

Still in dimension one, we now explain how a set of basis functions can be defined from the notion of subdivision.
A (one-dimensional) subdivision is a set of (one-dimensional) knots $\subdiv = \left\lbrace t^{(\subdiv)}_0, \ldots, t^{(\subdiv)}_{m_{\subdiv}+1} \right\rbrace$. \label{page:one:dim:subdiv}
We let $t^{(\subdiv)}_{(0)} \leq \dots \leq t^{(\subdiv)}_{(m_{\subdiv}+1)} $ be the corresponding ordered knots. \label{page:ordered:knots}
For any subdivision, we assume that
$ 0 = t^{(\subdiv)}_{(1)} < \dots < t^{(\subdiv)}_{(m_{\subdiv})} = 1$ and,
to deal with boundary issues, we set $t^{(\subdiv)}_{(0)} = -1$  and $t^{(\subdiv)}_{(m_{\subdiv}+1)} = 2$. 
With this convention, the smallest subdivision containing $0$ and $1$, denoted as $S^0$, is $\{-1, 0, 1, 2 \}$.

Now, let $D \in \mathbb{N}$ be the ambient (potentially large) dimension and consider a given set of active variables $\activeset = (a_1, \dots, a_d) \subseteq \{1 , \ldots , D\}$, with $1 \leq a_1 < \dots <a_d \leq D$, of size $d \leq D$. \label{page:D:d:active:set}
We now explain how to construct $d$-dimensional basis functions of the $d$ active variables. We define a $d$-dimensional subdivision (indexed by $\activeset$) as a vector of 
one-dimensional subdivisions $\subdiv = (\subdiv_{a_1}, \ldots, \subdiv_{a_d} )$. \label{page:vector:subdivisions}
For convenience, we may identify $S$ with the set of $d$-dimensional knots $\prod_{j=1}^d S_{a_j} \subseteq \mathbb{R}^d$.
We denote by $\subdivset_\activeset$ the set of $d$-dimensional subdivisions indexed by $\activeset$.

For conciseness, we use tensor notation. 
Thus, the notation for a multi-index is $\multi = (\ell_{a_1}, \dots, \ell_{a_d}) \in \mathbb{N}^d$.
For a subdivision $\subdiv \in \subdivset_\activeset$, the associated set of multi-indices is denoted  \label{page:set:multi:indices}
\[
\multiset = \{ \multi ; \, \ell_{i} \in \{1, \ldots, m_{\subdiv_{i}}\}, i \in \activeset \},
\]
and $A_\subdiv = \{ \alpha_{\multi} \in \mathbb{R} ; \multi \in \multiset \}$ is the set of associated real-valued sequences.
\label{page:A:subdiv}
For a given multi-index $\multi \in \multiset$, 
the associated vector of knots is denoted $\multiknot = \left( t^{(\subdiv_{a_1})}_{(\ell_{a_1})},\ldots,t^{(\subdiv_{a_d})}_{(\ell_{a_d})} \right)$. We call this vector a $d$-dimensional knot. \label{page:d:dim:knot}
Then, to any $\multi \in \multiset $ we associate the $d$-dimensional basis function defined by tensorization, for $t = (t_{a_1},\ldots,t_{a_d}) \in \domain$, \label{page:phi:multi}
\[
\phi_{\multi}^{(\subdiv)}(t) = 
\prod_{i=1}^d
\phi_{t^{(\subdiv_{a_i})}_{(\ell_{a_i}-1)},t^{(\subdiv_{a_i})}_{(\ell_{a_i})},t^{(\subdiv_{a_i})}_{(\ell_{a_i}+1)}}(t_{a_i}).
\]
This basis function is $1$ at the knot $( t^{(\subdiv_{a_1})}_{(\ell_{a_1})}, \ldots , t^{(\subdiv_{a_d})}_{(\ell_{a_d})} )$ and has support equal to the hypercube $\prod_{i=1}^d
[t^{(\subdiv_{a_i})}_{(\ell_{a_i}-1)},t^{(\subdiv_{a_i})}_{(\ell_{a_i}+1)}]$.

In Figure \ref{figure:illustration:knots}, we now provide an illustrative example. Consider a total of $D=4$ input variables, among which the $d=2$ variables $1$ and $3$ are active. We thus have the active set $\activeset = (a_1,a_2) = (1,3)$. Consider that for the first active variable, the subdivision is $\subdiv_1 = \{ -1,0,1/3,1,2 \}$. In $\subdiv_1$, the knots $-1$ and $2$ are only present to deal with boundary issues and the knots $t^{(\subdiv_1)}_{(1)} = 0$, $t^{(\subdiv_1)}_{(2)} = 1/3$ and $t^{(\subdiv_1)}_{(3)} = 1$ are displayed as two blue bullets and one blue triangle on the left panel. The basis functions  $\phi_{-1,0,1/3}$, $\phi_{0,1/3,1}$ and $\phi_{1/3,1,2}$ centered at these latter three knots are also displayed. 
Similarly for the second active variable, the subdivision is $\subdiv_3 = \{ -1,0,1/4,1/2,\linebreak[1]1,2 \}$ (this subdivision is written $\subdiv_3$ because the second active variable is the variable $3$). The four knots $t^{(\subdiv_3)}_{(1)} = 0$, $t^{(\subdiv_3)}_{(2)} = 1/4$, $t^{(\subdiv_3)}_{(3)} = 1/2$ and $t^{(\subdiv_3)}_{(4)} = 1$ are displayed at the middle panel, as well as the basis functions  $\phi_{-1,0,1/4}$, $\phi_{0,1/4,1/2}$, $\phi_{1/4,1/2,1}$ and $\phi_{1/2,1,2}$ centered at these knots.

Then, consider  the two-dimensional space formed by the two active variables with subdivision $\subdiv = (\subdiv_1 , \subdiv_3 )$. Its associated set of multi-indices $\multiset$ contains the $3 \times 4 = 12$ indices $(1,1) , (1,2) , \ldots , (3,4)$. The corresponding $12$ bi-dimensional knots are displayed on the right. The bi-dimensional knot represented as a triangle is $t ^{(\subdiv)}_{(2,3)} = (1/3,1/2)$ with multi-index $\multi=(2,3)$. It corresponds to the triangle one-dimensional knots $t^{(\subdiv_1)}_{(2)} = 1/3$ and $t^{(\subdiv_3)}_{(3)} = 1/2$. The bi-dimensional basis function $\phi_{\multi}^{(\subdiv)}$ on the right panel is a pyramid which top is located at the blue triangle and which vertices are the dashed lines.

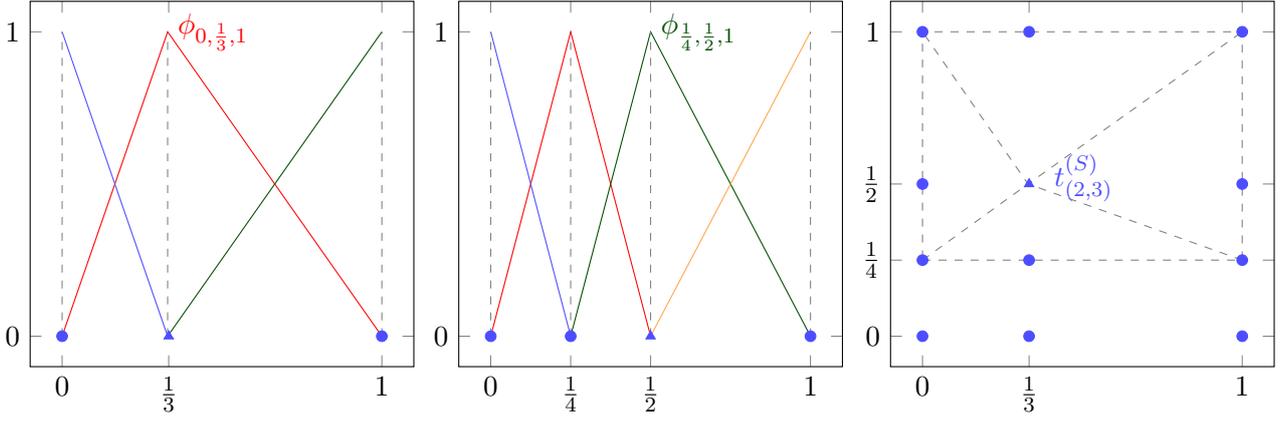
\begin{figure} 
	\centering
	\begin{tikzpicture}
		\begin{axis}[scale=1,%
			width=0.39\textwidth,
			height=0.25\textheight,
			xtick = {0, 1/3, 1},
			xticklabels = {$0$, $\frac{1}{3}$, $1$},
			ytick = {0, 1},
			scatter/classes={%
				a={mark={.},draw=black}}]
			\addplot[red, scatter,
			%	only marks,%
			scatter src=explicit symbolic]%
			table[meta=label] {
				x y label
				0 0 a
				0.33 1 a
				1 0 a
			};
			\addplot[blue!70!white,
			scatter src=explicit symbolic]%
			table[meta=label] {
				x y label
				0 1 b
				0.33 0 b
			};
			\addplot[green!60!black,
			scatter src=explicit symbolic]%
			table[meta=label] {
				x y label
				0.33 0 c
				1 1 c
			};
			\addplot[gray, dashed,
			mark = ,%
			scatter src=explicit symbolic]%
			table[meta=label] {
				x y label
				0 0 d
				0 1 d
			};
			\addplot[gray, dashed,
			mark = ,%
			scatter src=explicit symbolic]%
			table[meta=label] {
				x y label
				0.33 0 e
				0.33 1 e
			};
			\addplot[gray, dashed,
			mark = ,%
			scatter src=explicit symbolic]%
			table[meta=label] {
				x y label
				1 0 f
				1 1 f
			};
			\addplot[blue!70!white,mark=*] coordinates {(0,0)};
			\addplot[blue!70!white,mark=triangle*] coordinates {(1/3,0)};
			\addplot[blue!70!white,mark=*] coordinates {(1,0)};
			\addplot[red,mark=] coordinates {(1/3,1)};
			\node[right, red] at
			(33,100) {$\phi_{0,\frac{1}{3}, 1}$};
		\end{axis}
	\end{tikzpicture}
	\begin{tikzpicture}
		\begin{axis}[scale=1,%
			width=0.39\textwidth,
			height=0.25\textheight,
			xtick = {0, 1/4, 1/2, 1},
			xticklabels = {$0$, $\frac{1}{4}$, $\frac{1}{2}$, $1$},
			ytick = {0, 1},
			scatter/classes={%
				a={mark={.},draw=black}}]
			\addplot[red, scatter,
			%	only marks,%
			scatter src=explicit symbolic]%
			table[meta=label] {
				x y label
				0 0 a
				0.25 1 a
				0.5 0 a
			};
			\addplot[blue!70!white,
			scatter src=explicit symbolic]%
			table[meta=label] {
				x y label
				0 1 b
				0.25 0 b
			};
			\addplot[green!60!black,
			scatter src=explicit symbolic]%
			table[meta=label] {
				x y label
				0.25 0 c
				0.5 1 c
				1 0 c
			};
			\addplot[orange!70!white,
			scatter src=explicit symbolic]%
			table[meta=label] {
				x y label
				0.5 0 d
				1 1 d
			};
			\addplot[gray, dashed,
			mark = ,%
			scatter src=explicit symbolic]%
			table[meta=label] {
				x y label
				0 0 e
				0 1 e
			};
			\addplot[gray, dashed,
			mark = ,%
			scatter src=explicit symbolic]%
			table[meta=label] {
				x y label
				0.25 0 f
				0.25 1 f
			};
			\addplot[gray, dashed,
			mark = ,%
			scatter src=explicit symbolic]%
			table[meta=label] {
				x y label
				0.5 0 g
				0.5 1 g
			};
			\addplot[gray, dashed,
			mark = ,%
			scatter src=explicit symbolic]%
			table[meta=label] {
				x y label
				1 0 h
				1 1 h
			};
			\addplot[blue!70!white,mark=*] coordinates {(0,0)};
			\addplot[blue!70!white,mark=*] coordinates {(1/4,0)};
			\addplot[blue!70!white,mark=triangle*] coordinates {(1/2,0)};	
			\addplot[blue!70!white,mark=*] coordinates {(1,0)};
			\addplot[blue!70!white,mark=] coordinates {(1/3,1)};
			\node[right, green!60!black] at
			(50,100) {$\phi_{\frac{1}{4},\frac{1}{2}, 1}$};
		\end{axis}
	\end{tikzpicture}
	\begin{tikzpicture}
		\begin{axis}[scale=1,%
			width=0.39\textwidth,
			height=0.25\textheight,
			xtick = {0, 1/3, 1},
			xticklabels = {$0$, $\frac{1}{3}$, $1$},
			ytick = {0, 1/4, 1/2, 1},
			yticklabels = {$0$, $\frac{1}{4}$, $\frac{1}{2}$, $1$},	
			scatter/classes={%
				a={mark={.},draw=black}}]
			\addplot[dashed, gray,
			mark = ,%
			scatter src=explicit symbolic]%
			table[meta=label] {
				x y label
				0 0.25 a
				%		0.33 0.25 a
				1 0.25 a
				%		1 0.5 a
				1 1 a
				0 1 a
				0 0.25 a
			};
			\addplot[dashed, gray,
			mark = ,%
			scatter src=explicit symbolic]%
			table[meta=label] {
				x y label
				0 0.25 b
				0.33 0.5 b
				0 1 b
			};
			\addplot[dashed, gray,
			mark = ,%
			scatter src=explicit symbolic]%
			table[meta=label] {
				x y label
				1 0.25 c
				0.33 0.5 c
				1 1 c
			};
			\addplot[blue!70!white,mark=*] coordinates {(0,0)};
			\addplot[blue!70!white,mark=*] coordinates {(1/3,0)};
			\addplot[blue!70!white,mark=*] coordinates {(1,0)};	
			\addplot[blue!70!white,mark=*] coordinates {(0,1/4)};
			\addplot[blue!70!white,mark=*] coordinates {(1/3,1/4)};	
			\addplot[blue!70!white,mark=*] coordinates {(1,1/4)};
			\addplot[blue!70!white,mark=*] coordinates {(0,1/2)};
			\addplot[blue!70!white,mark=*] coordinates {(1,1/2)};
			\addplot[blue!70!white,mark=*] coordinates {(0,1)};
			\addplot[blue!70!white,mark=*] coordinates {(1/3,1)};	
			\addplot[blue!70!white,mark=*] coordinates {(1,1)};
			\addplot[blue!70!white,mark=triangle*] coordinates {(1/3,0.5)};	
			\node[right, blue!70!white] at
			(38,52) {$t_{(2,3)}^{(S)}$};
		\end{axis}
	\end{tikzpicture}
	\vspace{-2ex}
	\caption{Illustration of the knots and basis functions.}
	\label{figure:illustration:knots}
\end{figure}

Next, let us construct the finite-dimensional space of functions $E_\subdiv$ that contains finite-dimensional constrained GPs. We let $E_\subdiv$ be the linear space of functions $\domain \to \R$, spanned by the tensor basis functions $\phi_{\multi}^{(\subdiv)}$ with $\multi \in \multiset $. \label{page:E:subdiv}
Equivalently, $E_\subdiv$ is the space of multivariate splines of degree $1$,
constituted of componentwise piecewise linear functions, 
with knots defined by $S$.
For $\alpha \in A_\subdiv$, we let $Y_{\subdiv,\alpha}$ be the element of $E_\subdiv$ with coefficients $\alpha$: \label{page:Y:Subdiv:alpha}
\begin{equation} \label{eq:Y:S:alpha}
	Y_{\subdiv,\alpha} = \sum_{\multi \in \multiset} \alpha_{\multi} \phi_{\multi}^{(\subdiv)}.
\end{equation}
Note that $Y_{\subdiv,\alpha}(t_{\multi}^{(\subdiv)}) = \alpha_{\multi}$ for all $\multi \in \multiset$.
Finally, for a function $f: \domain \to \R$, 
we denote by $\pi_\subdiv(f)$ the projection of $f$ onto $E_\subdiv$: \label{page:pi:subdiv:f}
\begin{equation} \label{eq:piSm}
	\pi_\subdiv (f) = 
	\sum_{\multi \in \multiset} f(\multiknot) \phi_{\multi}^{(\subdiv)}.
\end{equation}
This projection is the componentwise piecewise linear function that coincides with $f$ at the knots of $\subdiv$.

\subsection{Function spaces for interpolation and inequality constraints}

We let $\mathcal{F}(A,\mathbb{R})$ (resp. $\mathcal{C}(A,\mathbb{R})$) be the set of functions (resp. continuous functions) from a subset $A$ of a finite-dimensional vector space to $\mathbb{R}$. 
For $d \in \{1,\ldots,D\}$, $U = (u_1,\ldots,u_n) \in ([0,1]^d)^n$ 
and $v^{(n)} = (v_1,\ldots,v_n) \in \R^n$, we write \label{page:I:U:vn}
\[
\interpset_{U,v^{(n)}} = \left\{ f: [0,1]^d \to \R ; f(u_i) = v_i ~ \text{for} ~ i=1,\ldots,n \right\},
\]
the set of $d$-dimensional functions which interpolate $v^{(n)}$ at $U$. Typically, a $D$-dimensional function of interest is observed at observation points $\delta_1,\ldots,\delta_n \in \domainD$, with values $v_1,\ldots,v_n$ and we let $u_i = (\delta_i)_{\activeset}$. Throughout the paper, we use the following notation: for $\activeset \subseteq \{1,\ldots,D\}$ and $x \in \mathbb{R}^D$, we let $x_\activeset$ be the vector extracted from $x$ by keeping only the components with indices in $\activeset$.

We treat the inequality constraints as a subset $\ineqset_D$ of $\mathcal{C}(\domainD , \mathbb{R})$. \label{page:C:D}
Note that, even if a constrained GP model has a subset $\activeset$ of active variables that is strictly smaller than $\{1 , \ldots , D \}$, the functions of this model are also functions of the full $D$ variables, and considered as such, they are required to belong to $\ineqset_D$.

Three classical examples of inequality constraints are given below,
corresponding respectively to boundedness, (componentwise) monotonicity and componentwise convexity:
\begin{eqnarray}
	\ineqset_D &=& \{ f \in \mathcal{C}(\domainD , \mathbb{R}) ; \, 
	a \leq f(x) \leq  b ~ \text{for all} ~ x \in \domainD \},  \label{eq:boundedness:D} \\
	\ineqset_D &=& \{ f \in \mathcal{C}(\domainD , \mathbb{R}) ; \,
	f(u) \leq f(v) ~ \text{for all} ~ u,v \in \domainD, u \leq v \}, \label{eq:monotonicity:D} \\
	\ineqset_D &=& \{ f \in \mathcal{C}(\domainD , \mathbb{R}) ; \,
	\text{for all $i \in \{1,\ldots,D\}$,} ~ \text{for all $x_{\sim i} \in [0,1]^{D-1}$,} \label{eq:convexity:D} \\
	& & \hspace{4cm} \text{the function} ~ u_i \mapsto f(u_i , x_{\sim i}) ~ \text{is convex}  \}. \nonumber
\end{eqnarray}
For monotonicity, the notation used above, $u \leq v$, means $u_1 \leq v_1 , \ldots , u_D \leq v_D$. For componentwise convexity, 
for $t \in \domainD$, $i \in \{1, \dots, D \}$ and $u_i \in [0, 1]$, 
we denote $t_{\sim i}$ the vector obtained from $t$ by removing the coordinate $i$, 
and $(u_i, t_{\sim i})$ the vector obtained from $t$ by replacing the $i^{\text{th}}$ coordinate by $u_i$.

The inequality constraint set $\ineqset_D$ of functions in $ \mathcal{C}(\domainD , \mathbb{R})$ naturally yields a corresponding inequality constraint set for functions in $ \mathcal{C}([0,1]^{|\activeset|} , \mathbb{R})$, where for a finite set $\Theta$, we write $|\Theta|$  for its cardinality.  The inequality constraint set in $ \mathcal{C}([0,1]^{|\activeset|} , \mathbb{R})$ corresponding to $\ineqset_D$ is \label{page:C:activeset}
\[
\ineqset_{\activeset} = \left\{ g: [0,1]^{|\activeset|} \to \mathbb{R}; 
\left(
x \in \domainD \mapsto g(x_{\activeset}) 
\right)
\in \ineqset_D \right\}.
\]
The set $\ineqset_{\activeset}$ is the set of functions of the variables in $\activeset$, that yield functions in $\ineqset_D$ when extended with $D - |\activeset|$ inactive variables.

We finally introduce the next basic condition on the inequality constraint set.

\begin{condition}[constraint set topology] \label{cond:C:convex:closed}
	The set $\ineqset_{D}$ is convex and closed with the topology of uniform convergence.
\end{condition}

Note that Condition \ref{cond:C:convex:closed} holds when $\ineqset_{D}$ is given by
one of \eqref{eq:boundedness:D}, \eqref{eq:monotonicity:D} or \eqref{eq:convexity:D}. 

\subsection{Finite-dimensional Gaussian processes under interpolation and inequality constraints}
\label{subsection:finite:dim:GP}

\subsubsection{Finite-dimensional Gaussian processes}

We consider a continuous GP $\xi_D$ indexed by $\domainD$, with mean zero and continuous covariance function $k_D$ on $\domainD \times \domainD$. The covariance function $k_D$ can be restricted to an active set of variables $\activeset \subseteq \{1 , \ldots , D\}$ as follows.
We let $ - \activeset = \{1 , \ldots , D\} \backslash \activeset$. We let $k_\activeset$ be the covariance function on $[0,1]^{|\activeset|} \times [0,1]^{|\activeset|}$ defined by, for $u,v \in [0,1]^{|\activeset|}$, $k_\activeset (u,v) = k_D( \widetilde{u} , \widetilde{v} )$, where $\widetilde{u}_{\activeset} = u$,  $\widetilde{u}_{ - \activeset} = 0$, $\widetilde{v}_{\activeset} = v$ and $\widetilde{v}_{ - \activeset} = 0$.
\label{page:k:D:K:J}Note that the choice of the value $0$ for the inactive variables, i.e. that are not in $\activeset$, is arbitrary.

We consider the continuous GP $\xi_\activeset$ on $[0,1]^{|\activeset|}$ defined by, for $u \in [0,1]^{|\activeset|}$, $\xi_\activeset (u) = \xi_D( \widetilde{u} )$, where $\widetilde{u} \in \domainD$ is defined by $\widetilde{u}_{\activeset} = u$ and  $\widetilde{u}_{ - \activeset} = 0$. \label{xi:active:set}
Then $\xi_\activeset$ has mean zero and covariance function $k_\activeset$.
Remark that $\xi_\activeset$ is obtained by freezing to $0$ the inputs of $\xi_D$ that are not in $\activeset$, which enables to model functions of the variables in $\activeset$.
Given a multi-dimensional subdivision, $\subdiv \in \subdivset_{\activeset}$, we consider the finite-dimensional GP 
\begin{equation} \label{eq:finite:dimensional:GP}
	\pi_\subdiv ( \xi_\activeset)
	= 
	\sum_{\multi \in \multiset}
	\xi_{\activeset}(\multiknot) \phi_{\multi}^{(\subdiv)}.
\end{equation}
This finite-dimensional GP only depends on the vector of values at the 
knots, $(\xi_{\activeset}(\multiknot))_{\multi \in \multiset}$.

Let us consider the covariance matrix of this vector, that we write $k_{\activeset}(\subdiv,\subdiv)$. \label{page:k:J:S:S}
Writing $\activeset = (a_1 , \ldots , a_d)$ with $1 \leq a_1 < \cdots < a_d \leq D$,
$k_{\activeset}(\subdiv,\subdiv)$ is the matrix of size 
$m_{\subdiv_{a_1}} \times \cdots \times m_{\subdiv_{a_d}}$, 
that we write in a multi-index way as, for $\multi, \multi' \in \multiset$, 
$k_{\activeset}(\subdiv,\subdiv)_{\multi, \multi'} 
= k_{\activeset} \left( \multiknot, \multiknotp \right).
$

Remark that with this multi-index writing, matrix products of the form 
$k_{\activeset}(\subdiv,\subdiv)k_{\activeset}(\subdiv,\subdiv)$, 
matrix inverses of the form $k_{\activeset}(\subdiv,\subdiv)^{-1}$ 
and matrix vector products of the form $k_{\activeset}(\subdiv,\subdiv) \alpha$ 
for $\alpha \in A_S$ can be defined by a straightforward extension 
of the corresponding operations for standard (single indexed) matrices and vectors.  Let us explain the matrix vector product case. 
Since $\multiset$ corresponds to the set $\prod_{i=1}^d \{1,\ldots,m_{\subdiv_{a_i}}\} $ and $\alpha = (\alpha_{\multi})_{\multi \in \multiset}$, the values of $\alpha$ can be indexed (arbitrarily) as $\alpha_1,\ldots,\alpha_N$, with $N = m_{\subdiv_{a_1}} \times \cdots \times m_{\subdiv_{a_d}}$. With the same indexing, the values in $k_{\activeset}(\subdiv,\subdiv)$ can be written as $(k_{i,j})_{i,j=1,\ldots,N}$. The standard matrix vector product between $(k_{i,j})_{i,j=1,\ldots,N}$ and $(\alpha_1,\ldots,\alpha_N)$ provides the vector $(\beta_1,\ldots,\beta_N)$. Using then the reverse indexing, we obtain the values $(\beta_{\multi})_{\multi \in \multiset}$ which are exactly the values of the (multi-indexed) matrix vector product $k_{\activeset}(\subdiv,\subdiv) \alpha$.

We assume that $k_{\activeset}(\subdiv,\subdiv)$ is invertible for all 
$\activeset \subseteq \{1,\ldots,D\}$ and $\subdiv \in \subdivset_{\activeset}$. 
This holds in particular when the matrix $[k_{D} (\delta_i,\delta_j)]_{i,j=1,\ldots,q}$ is invertible for any $q \in \mathbb{N}$ and $\delta_1,\ldots , \delta_q \in [0,1]^D$, two-by-two distinct. This is verified by most common covariance functions, for instance the squared exponential ones and those from the Mat\'ern class \cite{Rasmussen2005GP,stein1999interpolation}.

\subsubsection{Obtaining a finite number of linear inequality constraints for finite-dimensional Gaussian processes}
\label{subsubsection:obtaining:linear:inequality:constraints}

The main benefit of the finite-dimensional GP  $\pi_\subdiv ( \xi_\activeset)$ is that, for many classical inequality sets $\ineqset_{\activeset}$, obtained from $\activeset = (a_1,\ldots,a_d) \subseteq \{1 , \ldots , D\}$ and $\ineqset_D$, there exists an explicit (multi-indexed) matrix  $ M (\ineqset_{\activeset}) = (M (\ineqset_{\activeset})_{b,\multi})_{b=1,\ldots,B,\multi \in \multiset}$ and an explicit vector $v (\ineqset_{\activeset})  = (v( \ineqset_{\activeset} )_b)_{b=1,\ldots,B}$ such that \label{page:linear:constraints}
\begin{equation} \label{eq:equivalence:constraint}
	\pi_\subdiv ( \xi_\activeset)
	\in \ineqset_{\activeset}
	\Longleftrightarrow
	M (\ineqset_{\activeset} )
	(\xi_{\activeset}(\multiknot))_{\multi \in \multiset}
	\leq v( \ineqset_{\activeset} ),
\end{equation}
where again the definition of multi-indexed matrix-vector products is straightforward. 

Equation \eqref{eq:equivalence:constraint} provides $B$ linear inequality constraints on the vector of values of $\xi_\activeset$ at the knots $(\multiknot)_{\multi \in \multiset}$. Hence, the constraint $\pi_\subdiv ( \xi_\activeset)
\in \ineqset_{\activeset}$, that is a priori infinite-dimensional and intractable, boils down to simple linear inequality constraints. 

Consider for illustration the one-dimensional case, i.e. $d=1$, and let $\subdiv = \subdiv_{a_1}$. Then, in the case of boundedness constraints, it is shown in \cite{maatouk2017gaussian} that $\pi_\subdiv ( \xi_\activeset) \in \ineqset_{\activeset}$ if and only if
\[
\xi_{\activeset}(t_{(\ell)}^{(\subdiv)}) \in [a,b],
~  ~
\ell = 1 , \ldots , m_{\subdiv}.
\]
Hence, for boundedness constraints, \eqref{eq:equivalence:constraint} holds when $M (\ineqset_{\activeset})$ is the (single-indexed) $2 m_{\subdiv} \times m_{\subdiv}$ matrix $(-I_{m_{\subdiv}},I_{m_{\subdiv}})^\top$ and $v( \ineqset_{\activeset} )$ is $(-a,\ldots,-a,b,\ldots,b)^\top$ of size $2 m_{\subdiv} \times 1$.

In the case of monotonicity constraints, it is shown in \cite{maatouk2017gaussian} that $\pi_\subdiv ( \xi_\activeset) \in \ineqset_{\activeset}$ if and only if
\[
\xi_{\activeset}(t_{(\ell)}^{(\subdiv)})
\geq  
\xi_{\activeset}(t_{(\ell-1)}^{(\subdiv)}),
~  ~
\ell = 2 , \ldots , m_{\subdiv}.
\]

Hence under monotonicity constraints, \eqref{eq:equivalence:constraint} holds when
$v( \ineqset_{\activeset} )$ is the zero vector
and
$M (\ineqset_{\activeset})$ is the (single-indexed) $(m_{\subdiv}-1) \times m_{\subdiv}$ banded matrix
\[
\begin{pmatrix}
	1 & -1 & 0 &  \cdots & & 0 \\
	0 & 1 & -1 &  0  & \cdots & \vdots \\
	\vdots & \ddots & \ddots & \ddots & & \vdots \\
	\vdots &  & \ddots & \ddots & \ddots & 0 \\
	0 & \cdots & \cdots & 0 & 1 & -1 \\
\end{pmatrix}.
\]

In dimension $ |\activeset| = d > 1$, for boundedness and monotonicity constraints, the principle is the same but the notations become more cumbersome. In Appendix \ref{appendix:obtaining:linear:inequality:constraints}, we provide the expressions of $M (\ineqset_{\activeset})$ and $v( \ineqset_{\activeset} )$ for which \eqref{eq:equivalence:constraint} holds, for boundedness, monotonicity and componentwise convexity, in any dimension. These expressions follow from \cite{LopezLopera2017FiniteGPlinear,maatouk2017gaussian}, except for componentwise convexity in dimension larger than one, when they are not available in earlier references, to the best of our knowledge.

\subsubsection{The maximum a posteriori function}
\label{subsubsection:MAP}

Consider now some input locations $U = (u_1,\ldots,u_n) \in ([0,1]^d)^n$ and some corresponding observations $v^{(n)} = (v_1,\ldots,v_n) \in \R^n$. 
We are interested in the conditional distribution of $\pi_\subdiv ( \xi_\activeset)$ given $\pi_\subdiv ( \xi_\activeset) \in \interpset_{U,v^{(n)}}$ (interpolation constraints) and given $\pi_\subdiv ( \xi_\activeset) \in \ineqset_{\activeset}$ (inequality constraints). 

Let us define the following central characteristics of this conditional distribution:\label{page:hat:alpha}
\begin{equation} \label{eq:def:hat:alpha}
	\widehat{\alpha}_{\activeset,\subdiv, U, v^{(n)}}
	= \underset{ \substack{\alpha \in A_{\subdiv}
			\\
			\textrm{s.t. }\, Y_{\subdiv,\alpha} \, \in  \, \interpset_{U, v^{(n)}} \cap \, \ineqset_{\activeset}
	} }{\mathrm{argmin}}
	\alpha^\top
	k_{\activeset}(\subdiv,\subdiv)^{-1}
	\alpha.
\end{equation}

\label{page:map:function}
The mathematical definitions underlying \eqref{eq:def:hat:alpha} are recalled in Table \ref{table:list:of:symbols}. In words,
the quantity in \eqref{eq:def:hat:alpha} is the mode of the density of the Gaussian vector extracted from the finite-dimensional GP at the knots, conditionally on the equality and inequality constraints. The linear combination of the basis functions based on these knot values leads to the MAP function
\begin{equation} \label{eq:the:mode}
	Y_{\subdiv, \widehat{\alpha}_{\activeset,\subdiv,U,v^{(n)}} },
\end{equation}
which is used by \cite{bay2016generalization,bay2017new,LopezLopera2017FiniteGPlinear,maatouk2017gaussian} and is also called the mode of a finite-dimensional GP given interpolation and inequality constraints.

Note that the interpolation constraint $Y_{\subdiv,\alpha} \in \interpset_{U,v^{(n)}}$ can be expressed as a set of explicit linear equations for $\alpha$, see for instance \cite{LopezLopera2017FiniteGPlinear,maatouk2017gaussian}. 
When the equivalence \eqref{eq:equivalence:constraint} holds, the constraint $Y_{\subdiv,\alpha} \in \ineqset_{\activeset}$ is expressed as a set of linear inequality constraints. In this case,
the optimization problem \eqref{eq:def:hat:alpha} is a quadratic optimization problem with linear inequality constraints and efficient optimization procedures are available \cite{boyd2004convex,goldfarb1983numerically}, see also  \cite{LopezLopera2017FiniteGPlinear,maatouk2017gaussian}.

Furthermore, under Condition \ref{cond:C:convex:closed} and when $u_1,\ldots,u_n$ are two-by-two distinct, 
$\widehat{\alpha}_{\activeset,\subdiv, U, v^{(n)}}$ in  \eqref{eq:def:hat:alpha} is well-defined when the minimization set is non-empty, because this set is closed and convex, and the function to be minimized is continuous and strictly convex, and goes to infinity as $||\alpha||$ goes to infinity.

Finally, beside the MAP function $Y_{\subdiv, \widehat{\alpha}_{\activeset,\subdiv,U,v^{(n)}} }$, obtaining conditional realizations of $\pi_\subdiv ( \xi_\activeset) $ given $\pi_\subdiv ( \xi_\activeset) \in \interpset_{U,v^{(n)}}$ and $\pi_\subdiv ( \xi_\activeset) \in \ineqset_{\activeset}$ is also of high interest, for uncertainty quantification. When \eqref{eq:equivalence:constraint} holds, these conditional realizations can be approximately sampled by Monte Carlo and Markov chain Monte Carlo (MCMC) procedures \cite{LopezLopera2017FiniteGPlinear,maatouk2017gaussian} (see also Section \ref{section:numerical:experiments}).

\section{The MaxMod algorithm} \label{section:MaxMod}

\subsection{Initialization}

We consider the set of $n$ observation points and observed values to be fixed in Section \ref{section:MaxMod}. 
We let  $x_D^{(1)},\ldots,x_D^{(n)} \in \domainD$ be the $n$
two-by-two distinct observation points, and $y_1 ,\ldots,y_n \in \R $ be the corresponding observations.  Typically, $y_i = f( x_D^{(i)} )$ where $f : \domainD \to \mathbb{R}$ is the function of interest that is modeled by a GP realization. 
We write $X_D = (x_D^{(1)},\ldots,x_D^{(n)})$ and $y^{(n)} = (y_1,\ldots,y_n)$.
For $i=1,\ldots,n$ and $\activeset \subseteq \{1 , \ldots , D\}$ 
we write $x_\activeset^{(i)}$ for the vector extracted from $x_D^{(i)}$ 
by keeping the components with indices in $\activeset$. 
We write $X_{\activeset} = (x_\activeset^{(1)},\ldots,x_\activeset^{(n)})$.  \label{page:observation:points:values}

The sequential procedure we suggest is based on updating the set of active input variables and the (multidimensional) subdivision on the space of active input variables. 
It is initialized by a non-empty set $\activeset_0 \subseteq \{1,\ldots,D\}$ of active variables, 
with $| \activeset_0 | = d_0$ and by an initial subdivision $\subdiv^{(0)} \in \subdivset_{\activeset_0}$.

We also assume that $x_{\activeset_0}^{(1)},\ldots,x_{\activeset_0}^{(n)}$ are two-by-two distinct. This condition is not restrictive and typically holds when selecting a few active variables for initialization (for instance the most correlated with the outputs, empirically).
We also assume that the set 
$ \{\alpha \in A_{\subdiv^{(0)}} ; 
Y_{\subdiv^{(0)},\alpha} \in \interpset_{X_{\activeset_0},y^{(n)}} \, \cap \, \ineqset_{\activeset_0} \} $
is non-empty, which means that there exists a finite-dimensional interpolating function satisfying the inequality constraints at the initialization of the sequential procedure. This holds for general choices of a sufficient number of multidimensional knots.

Then, $\widehat{\alpha}_{\activeset_0,\subdiv^{(0)},X_{\activeset_0},y^{(n)}}$ is well-defined when Condition \ref{cond:C:convex:closed} holds, 
since the minimization set in \eqref{eq:def:hat:alpha} is non-empty, as discussed above.

\subsection{The $L^2$ difference between modes}

For a non-empty set $\activeset \subseteq \{1 , \ldots , D\}$, 
and for $\subdiv \in \subdivset_\activeset$, define \label{page:hat:alpha:Y:maxmod}
\begin{equation*}
	\widehat{\alpha}_{\activeset,\subdiv} = \widehat{\alpha}_{\activeset,\subdiv,X_{\activeset},y^{(n)}} 
	\qquad \textrm{and}
	\qquad \widehat{Y}_{\activeset,\subdiv} = Y_{\subdiv, \widehat{\alpha}_{\activeset,\subdiv}}.
\end{equation*}
By definition of $\widehat{\alpha}_{\activeset,\subdiv}$, 
the function $\widehat{Y}_{\activeset,\subdiv}$ 
satisfies both interpolation and inequality constraints,
i.e. belongs to $\interpset_{X_{\activeset},y^{(n)}} \, \cap \, \ineqset_{\activeset}$. Recall that, by definition of $\widehat{\alpha}_{\activeset,\subdiv}$, this function is the most likely constrained interpolator of the observations, given the active variables $\activeset$ and the knots in $\subdiv$.
Let us now introduce two simple operations on a given subdivision $\subdiv \in \subdiv_\activeset$.
\begin{itemize}
	\item Insertion of a new knot: for $i \in \activeset$ and for $t \in [0,1] \setminus \subdiv_i$,
	we denote by $\subdiv \addknot{i} t$ the subdivision $\subdiv' \in \subdivset_\activeset$ 
	defined by $\subdiv'_j = \subdiv_j$ for $j \in \activeset \setminus \{i\}$, 
	and by $\subdiv'_i = \subdiv_i \cup \{t\}$. \label{page:S:union:i:t}
	\item Addition of a new variable: for $i \not \in \activeset$, 
	we denote by $\subdiv \addvar{\activeset} i$ the subdivision $\subdiv' \in \subdivset_{\activeset \cup \{i \}}$ 
	defined by $\subdiv'_j = \subdiv_j$ for $j \in \activeset $, 
	and by $\subdiv'_i = S^0$, the minimal subdivision defined in Section \ref{sec:finite:dimensional:constrained:GP} that corresponds to functions that are linear with respect to variable $i$. \label{page:S:plus:i}
\end{itemize}

For a non-empty $\activeset \subset \{ 1 , \ldots , D\}$, for $S \in \subdivset_{\activeset}$ and for $i \in \{ 1 , \ldots , D\}$, let \label{page:N:S:J:i}
\[
N_{S,\activeset,i} = 
\begin{cases}
	\prod_{\substack{j \in \activeset \\ j \neq i }} \left( |S_j| -2 \right)
	& \text{if} ~ ~ i \in \activeset
	\\ 
	\prod_{\substack{j \in \activeset}}  \left( |S_j| -2 \right)
	& \text{if} ~ ~ i \not \in \activeset,
\end{cases}
\]
with the convention $\prod_{\substack{j \in \activeset \\ j \neq i }} \left( |S_j| -2 \right) = 1$ if $\activeset = \{ i \}$.

The quantity $N_{S,\activeset,i}$ is the increase of the number of basis functions if, starting from a subdivision $S$ and a set of active variables $\activeset$, a knot was inserted for the active variable $i$ or the inactive variable $i$ was made active. Recall that for $j \in \activeset$, $|S_j|-2$ is the number of one-dimensional basis functions corresponding to the variable $j$.

Now, we introduce the criterion used by MaxMod.
The idea is to measure the $L^2$ difference between the mode
of the finite-dimensional process obtained at a potential next step of the algorithm, and the current one, characterized by $\activeset$ and $\subdiv$.
At a new iteration, two choices are possible. Either a new knot $t$ is inserted for an active variable $i \in \activeset$, 
or a new active variable $i \not \in \activeset$ is added. 
In this case, the minimal subdivision is added for the dimension $i$
(corresponding to a linear function with respect to the dimension $i$).
Formally,  for $t \in [0,1]$ and $i \in \{1,\ldots,D\}$, the criterion is written: \label{page:normalized:Ldeux:difference}
\begin{equation} \label{eq:criterion:maxmod}
	I_{\activeset,\subdiv}(i,t) =
	\begin{cases}
		\frac{ 1
		}{
			N_{S,\activeset,i}
		}
		\displaystyle \int_{[0,1]^d}
		\left(
		\widehat{Y}_{\activeset, \, \subdiv \addknot{i} t}(x)
		-
		\widehat{Y}_{\activeset, \, \subdiv}(x)
		\right)^2
		d x
		&
		~ ~ \text{if} ~ ~ i \in \activeset,
		\vspace{0.1cm}\\
		\frac{ 1
		}{
			N_{S,\activeset,i}
		}
		\displaystyle \int_{[0,1]^{d+1}}
		\left(
		\widehat{Y}_{\activeset \cup \{i\}, \, \subdiv \addvar{\activeset} i }(x)
		-
		\widehat{Y}_{\activeset , \, \subdiv}(x)
		\right)^2
		d x
		&
		~ ~ \text{if} ~ ~ i \not \in \activeset.
	\end{cases}
\end{equation}
Note that in \eqref{eq:criterion:maxmod}, in the case $i \not \in \activeset$, we have made the slight abuse of notation of treating $\widehat{Y}_{\activeset , \, \subdiv}$ as a function of the $|\activeset|+1$ variables $(x_j)_{j \in \activeset \cup \{i\}}$, that does not use the variable $x_i$. We also remark that, for $i \not \in \activeset$, $I_{\activeset,\subdiv}(i,t)$ does not depend on $t$. We explain how to compute \eqref{eq:criterion:maxmod} efficiently in practice in Appendix \ref{subsection:computing:Ldeux:difference:modes}, resulting in a computational complexity that is linear in the number of multi-dimensional knots.

The criterion \eqref{eq:criterion:maxmod} also penalizes insertions of knots at active variables or additions of new active variables, that increase the number of basis functions (that is the computational complexity) significantly. More precisely, the $L^2$ difference between modes is divided by the number of additional basis functions.

\subsection{The MaxMod algorithm}

For $\Delta, \Delta' >0$, we introduce the reward 
\label{page:reward}
\begin{equation}
	R_{\activeset, \subdiv}(i, t) = 
	\begin{cases} 
		\Delta d(t, \subdiv_{ i }) & \text{ if } i \in \activeset \\
		\Delta' & \text{ otherwise }
	\end{cases}
	\label{eq:criterion:reward}
\end{equation}
that promotes the addition of new variables or the insertion of one-dimensional knots not too close to existing ones. 
Here, for $q \in \mathbb{N}$, $x \in \R^q$ and $B \subseteq \R^q$, we denote by $d(x,B) = \inf_{u \in B} ||x - u||$, the distance between $x$ and the set $B$.  \label{page:d:t:S}
The reward for adding a new active variable is $\Delta'$ and 
the reward for inserting a knot for an existing variable is $\Delta$ times the distance to the closest existing knot.

For technical reasons, we need to prevent inserting one-dimensional knots that coincide with existing ones. We thus introduce a sequence of separation distances $(b_m)_{m \in \mathbb{N}}$, with $b_m>0$, such that, at step $m$ of MaxMod, no one-dimensional knot should be inserted at distance less than $b_m$ to an existing knot. In practice, $b_m$ can be taken as small as desired, even equal to the machine precision. 
We assume that the $(b_m)_{m \in \mathbb{N}}$ are small enough such that
\begin{equation} \label{eq:nonCoverageCondition}
	\forall m \in \mathbb{N}, \qquad 2 b_m \left( \max_{i \in \activeset_0} \left \vert \subdiv^{(0)}_i  \right \vert + m \right) < 1.
\end{equation}
As will be explained below, \eqref{eq:nonCoverageCondition} simply guarantees that the separation distance does not prevent MaxMod to insert knots.

We also introduce a sequence $(a_m)_{m \in \mathbb{N}}$ of strictly positive numbers that correspond to distances to optimality. The principle is that a knot or a variable added by MaxMod at step $m$ should maximize a quality criterion and that we allow for a distance to the global maximum that is not exactly zero but is simply bounded by $a_m$.
The MaxMod sequential procedure can now be written as in Algorithm \ref{alg:iterative}.

%\begin{algorithm}
	%\caption{Build tree}
	%\label{alg:buildtree}
	%\begin{algorithmic}
		%\STATE{Define $P:=T:=\{ \{1\},\ldots,\{d\}$\}}
		%\WHILE{$\#P > 1$}
		%\STATE{Choose $C^\prime\in\mathcal{C}_p(P)$ with $C^\prime := \operatorname{argmin}_{C\in\mathcal{C}_p(P)} \varrho(C)$}
		%\STATE{Find an optimal partition tree $T_{C^\prime}$ }
		%\STATE{Update $P := (P{\setminus} C^\prime) \cup \{ \bigcup_{t\in C^\prime} t \}$}
		%\STATE{Update $T := T \cup \{ \bigcup_{t\in\tau} t : \tau\in T_{C^\prime}{\setminus} \mathcal{L}(T_{C^\prime})\}$}
		%\ENDWHILE
		%\RETURN $T$
		%\end{algorithmic}
	%\end{algorithm}

\begin{algorithm}[t!]
	\caption{MaxMod (maximum modification of the MAP)}\label{alg:iterative}
	\begin{algorithmic}[1]
		\BStatex  \textbf{Input parameters:} $\Delta>0$, $\Delta' >0$, two sequence of strictly positive numbers $(a_m)_{m \in \mathbb{N}}$ and $(b_m)_{m \in \mathbb{N}}$, the initial set of active variables $\activeset_0 \subseteq \{1,\ldots,D\}$ and the initial subdivision $\subdiv^{(0)} \in \subdivset_{\activeset_0}$. 
		\BStatex \textbf{Sequential procedure:} For $m \in \mathbb{N}$,  $m \geq 0$, do the following.
		\BState Set $i^\star_{m+1} \in \{1,\ldots, D\}$, $t^{\star}_{m+1} \in  [0,1] $
		such that
		$d \big( t^{\star}_{m+1} , \subdiv^{(m)}_{i^\star_{m+1}} \big) \geq b_m$ 
		if $i^\star_{m+1} \in \activeset_m$, and such that
		\begin{align}
			\label{eq:max:mod}
			& I_{\activeset_m, S^{(m)}}(i^\star_{m+1} ,t^\star_{m+1}) 
			%+ \Delta d \left( t^\star_{m+1}, \subdiv^{(m)}_{i^\star_{m+1}} \right) + a_m
			+ R_{\activeset_m, \subdiv^{(m)}}(i^\star_{m+1}, t^\star_{m+1}) + a_m
			\notag
			\\
			&			\geq 
			\underset{
				\substack{
					i \in \{1 , \ldots , D\}, \, t \in [0,1], \\
					\text{s.t. } d(t , \subdiv^{(m)}_{ i })\, \geq \, b_m \\
					\text{if $i \in \activeset_m$}
				}
			}{\sup}
			\big(
			I_{\activeset_m, S^{(m)}}(i ,t) 
			%+ \Delta d(t, \subdiv^{(m)}_{ i })
			+ R_{\activeset_m, \subdiv^{(m)}}(i, t)
			\big).
		\end{align}
		\If {$i^\star_{m+1} \in \activeset_m$} $\activeset_{m+1} = \activeset_{m} $ 
		and $\subdiv^{(m+1)} = \subdiv^{(m)} \, \addknot{ i^\star_{m+1} } \, t^\star_{m+1}$, that is we add the knot $t^\star_{m+1}$ to the current subdivision $\subdiv^{(m)}$ for the variable $i^\star_{m+1} $.
		\Else~{$\activeset_{m+1} = \activeset_{m} \cup \{ i^\star_{m+1} \}$ 
			and $\subdiv^{(m+1)} = \subdiv^{(m)} \addvar{} i^\star_{m+1}$, that is we add the variable $i^\star_{m+1} $ to the current subdivision $\subdiv^{(m)}$.}
		\EndIf
	\end{algorithmic}
\end{algorithm}

MaxMod maximizes a quality criterion, over the added variable or the inserted one-dimensional knot. This quality criterion is the sum of the reward and the $L^2$ distance between the current mode function and the next one (divided by the number of additional knots). Note that if a new variable is added, there is no new one-dimensional knot to select. Indeed, the new one-dimensional subdivision of the new variable is always composed of the knots $\{-1,0,1,2\}$. If a knot is inserted to an existing variable, its location in $[0,1]$ is optimized continuously. As discussed above, we allow for an approximate maximization of the quality criterion with a gap $a_m$ to the maximum at step $m$. Again as discussed above, no knot should be inserted to an active variable at distance less than $b_m$ from an already existing knot.

Notice that Algorithm \ref{alg:iterative} is well-defined in the sense that $i^\star_{m+1}, \, t^\star_{m+1}$ 
can be chosen at each step. Indeed, first the supremum in \eqref{eq:max:mod} is over a non-empty set.
This is because \eqref{eq:nonCoverageCondition} implies that for all $m \in \mathbb{N}$, 
one can take any $i \in \activeset_0 \subseteq \activeset_m$ and find $t \in [0,1]$ s.t. $d \big(t , \subdiv^{(m)}_{i} \big) \geq b_m$.
Indeed, consider the intervals of length $2b_m$ centered at $t^{(S^{(m)}_i)}_j$, $j=1, \dots, m_{S^{(m)}_i}$.
At step $m$ of Algorithm \ref{alg:iterative}, the number of these intervals is less than 
$\max_{i \in \activeset_0} \big \vert \subdiv^{(0)}_i  \big \vert + m$, 
since the initial number of knots is less than 
$\max_{i \in \activeset_0} \big \vert \subdiv^{(0)}_i  \big \vert$ 
and the algorithm inserted at most one knot for coordinate $i$ at each previous step. 
Thus, the union of these intervals does not cover $[0, 1]$.\\ 
%Secondly, the set over which the $\sup$ is defined is non-empty from the condition on $b_m$. 
Secondly, the $\sup$ is finite in \eqref{eq:max:mod} as proved in the following lemma. 

\begin{lemma} \label{lem:sup:finite}
	Let Condition \ref{cond:C:convex:closed} hold. 
	Then, for each $m \in \mathbb{N}$, the $\sup$ in \eqref{eq:max:mod} is finite.
\end{lemma}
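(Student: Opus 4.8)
The plan is to reduce the supremum to finitely many finite quantities and then to control the single genuinely continuous piece by a compactness argument. Since the outer index $i$ ranges over the finite set $\{1,\ldots,D\}$, it suffices to bound, for each fixed $i$, the supremum over the admissible $t$ of $I_{\activeset_m,\subdiv^{(m)}}(i,t) + R_{\activeset_m,\subdiv^{(m)}}(i,t)$. The reward term is trivially bounded, since $R_{\activeset_m,\subdiv^{(m)}}(i,t) = \Delta\, d(t,\subdiv^{(m)}_i) \le \Delta$ when $i \in \activeset_m$ and equals $\Delta'$ otherwise. For $i \notin \activeset_m$ there are finitely many choices and $I_{\activeset_m,\subdiv^{(m)}}(i,t)$ does not depend on $t$; each such value is finite because the mode $\widehat{Y}_{\activeset_m\cup\{i\},\,\subdiv^{(m)}\addvar{\activeset_m} i}$ is well-defined (its feasible set is non-empty, as the current mode, viewed as a function that ignores $x_i$, already lies in $E_{\subdiv^{(m)}\addvar{\activeset_m} i}$ and satisfies both the interpolation and the inequality constraints) and piecewise linear on a compact set. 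Hence the only nontrivial case is $i \in \activeset_m$, where $t$ ranges over the compact set $K_m = \{ t \in [0,1] ; d(t,\subdiv^{(m)}_i) \ge b_m \}$. Using $(a-b)^2 \le 2a^2 + 2b^2$ together with the fact that $\widehat{Y}_{\activeset_m,\subdiv^{(m)}}$ is a fixed piecewise-linear function of finite $L^2$ norm, it is enough to bound $\sup_{t\in K_m} \| \widehat{Y}_{\activeset_m,\, \subdiv^{(m)}\addknot{i} t} \|_{L^2}^2$.

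Next, write $S' = \subdiv^{(m)}\addknot{i} t$. The main idea is that inserting a knot only refines the piecewise-linear space, so $E_{\subdiv^{(m)}} \subseteq E_{S'}$; hence the current mode admits a coefficient vector $\beta(t) \in A_{S'}$, with $\beta(t)_{\multi} = \widehat{Y}_{\activeset_m,\subdiv^{(m)}}(t^{(S')}_{\multi})$, satisfying $Y_{S',\beta(t)} = \widehat{Y}_{\activeset_m,\subdiv^{(m)}}$. As the same function, $Y_{S',\beta(t)}$ belongs to $\interpset_{X_{\activeset_m},y^{(n)}} \cap \ineqset_{\activeset_m}$ and is therefore feasible for the optimization \eqref{eq:def:hat:alpha} defining $\widehat{\alpha}_{\activeset_m,S'}$; in particular that feasible set is non-empty, so under Condition \ref{cond:C:convex:closed} the mode is well-defined. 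Optimality then yields $\widehat{\alpha}_{\activeset_m,S'}^\top k_{\activeset_m}(S',S')^{-1} \widehat{\alpha}_{\activeset_m,S'} \le \beta(t)^\top k_{\activeset_m}(S',S')^{-1} \beta(t)$. Combining this with the elementary eigenvalue bounds $\|\widehat{\alpha}_{\activeset_m,S'}\|^2 \le \lambda_{\max}(k_{\activeset_m}(S',S')) \, \widehat{\alpha}_{\activeset_m,S'}^\top k_{\activeset_m}(S',S')^{-1} \widehat{\alpha}_{\activeset_m,S'}$ and $\|\widehat{Y}_{\activeset_m,S'}\|_{L^2}^2 = \widehat{\alpha}_{\activeset_m,S'}^\top G(t)\, \widehat{\alpha}_{\activeset_m,S'} \le \lambda_{\max}(G(t)) \|\widehat{\alpha}_{\activeset_m,S'}\|^2$, where $G(t)$ is the Gram matrix with entries $\int_{\domain} \phi_{\multi}^{(S')} \phi_{\multi'}^{(S')}$, gives $\|\widehat{Y}_{\activeset_m,S'}\|_{L^2}^2 \le \lambda_{\max}(G(t))\, \lambda_{\max}(k_{\activeset_m}(S',S'))\, \beta(t)^\top k_{\activeset_m}(S',S')^{-1} \beta(t)$.

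Finally, I would conclude by continuity and compactness. Each of the three factors on the right-hand side is a continuous function of $t$ on $K_m$: the entries of $G(t)$ and of $k_{\activeset_m}(S',S')$ depend continuously on the (continuously moving) knot positions, $\beta(t)$ is continuous because $\widehat{Y}_{\activeset_m,\subdiv^{(m)}}$ is a fixed continuous function, and $t \mapsto k_{\activeset_m}(S',S')^{-1}$ is continuous because the matrix remains invertible on all of $K_m$. This last point is the crux and the only place where the separation sequence $(b_m)$ is used: for $t \in K_m$ the inserted one-dimensional knot lies at distance at least $b_m > 0$ from the existing ones, so $S' \in \subdivset_{\activeset_m}$ is a genuine refinement, $k_{\activeset_m}(S',S')$ is invertible by the standing assumption on the covariance, and the limit points where it would degenerate are excluded from $K_m$. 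Since $K_m$ is compact, each factor attains a finite maximum, so $\sup_{t \in K_m}\|\widehat{Y}_{\activeset_m,S'}\|_{L^2}^2 < \infty$; collecting the finitely many cases shows that the whole supremum in \eqref{eq:max:mod} is finite. The main obstacle is precisely the potential degeneration of $k_{\activeset_m}(S',S')$ as $t$ approaches an existing knot, where $k_{\activeset_m}(S',S')^{-1}$ and hence the mode could blow up; compactness of $K_m$, guaranteed by $b_m > 0$, is what rules this out.
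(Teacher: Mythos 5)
Your proposal is correct and follows essentially the same route as the paper's proof: bound the reward trivially, dispose of the inactive-variable case since the criterion is then independent of $t$, and for an active variable write the current mode in the refined basis as a feasible point, use optimality of $\widehat{\alpha}_{\activeset_m,\subdiv^{(m)}\addknot{i}t}$ together with eigenvalue bounds on $k_{\activeset_m}(\subdiv^{(m)}\addknot{i}t,\subdiv^{(m)}\addknot{i}t)$, and conclude by compactness of $\{t : d(t,\subdiv^{(m)}_i)\geq b_m\}$, which keeps the covariance matrix uniformly non-degenerate. The only cosmetic difference is that you pass through the Gram matrix $G(t)$ to bound the $L^2$ norm, whereas the paper bounds the sup norm of the mode directly from the bounded coefficient vector; both steps are valid.
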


\begin{table}[t!]
	\caption{List of symbols for Sections \ref{sec:finite:dimensional:constrained:GP} and \ref{section:MaxMod}, with the page numbers where the symbols are introduced.}
 	\label{table:list:of:symbols}
	%		\color{black}
	\centering
	\small
	\begin{tabular}{p{2.9cm}|p{10.3cm}|p{0.9cm}} 
		\hline 
		{\bf Symbol} & {\bf Description}  &	 {\bf Page} \\
		\hline
		$\phi_{u,v,w}$ & One-dimensional hat basis function with knots $u,v,w$ &  \pageref{page:phi:u:v:w} \\
		%\hline 		
		%	$\subdiv$ & One-dimensional subdivision (finite %set of knots) &  \pageref{page:one:dim:subdiv} \\
		\hline 		
		$t^{(\subdiv)}_{(0)}, \ldots, t^{(\subdiv)}_{(m_{\subdiv}+1)}$ & Ordered knots of a one-dimensional subdivision $\subdiv$ &  \pageref{page:ordered:knots} \\
		\hline 		
		$D,d$ & Input space dimension and  number of active variables&  \pageref{page:D:d:active:set} \\
		\hline 
		$\activeset = (a_1,\ldots,a_d)$ & Set of active variables &  \pageref{page:D:d:active:set} \\
		\hline 
		$\subdiv = (\subdiv_{a_1}, \ldots, \subdiv_{a_d} )$ & (Multi-dimensional) subdivision (vector of one-dim. subdivisions) &  \pageref{page:vector:subdivisions} \\
		\hline 
		$\multi  \in \multiset $ & 
		Multi-index for a subdivision $\subdiv$ (indexation of the knots) &  \pageref{page:set:multi:indices} \\
		\hline 
		$\alpha \in A_\subdiv$ & 
		Sequence of coefficients for a subdivision $\subdiv$ (values at knots) &  \pageref{page:A:subdiv} \\
		\hline 
		$\multiknot$ & 
		($d$-dimensional) knot for a subdivision $\subdiv$ &  \pageref{page:d:dim:knot} \\
		\hline 
		$\phi_{\multi}^{(\subdiv)}$ & 
		Basis function for a subdivision $\subdiv$ (tensorized hat basis functions) &  \pageref{page:phi:multi} \\
		\hline 
		$ E_\subdiv$ & 
		Linear space of the componentwise, piecewise linear functions (equivalently multivariate splines of degree 1) with knots in $\subdiv$  &  \pageref{page:Y:Subdiv:alpha} \\
		\hline 
		$Y_{\subdiv,\alpha} $ & 
		Function in $E_\subdiv$ with values at knots $\alpha$
		&  \pageref{page:Y:Subdiv:alpha} \\
		\hline 
		$\pi_\subdiv(f)$ & 
		Function in $E_\subdiv$ interpolating $f$ at knots in $\subdiv$ &  \pageref{page:pi:subdiv:f} \\
		\hline 
		$\interpset_{U,v^{(n)}}$ & 
		Set of $d$-dimensional functions whose values at inputs $U$ are $v^{(n)}$ &  \pageref{page:I:U:vn} \\
		\hline 
		$\ineqset_D$ & 
		Set of $D$-dimensional functions satisfying the inequality constraints &  \pageref{page:C:D} \\
		\hline 
		$\ineqset_{\activeset}$ & 
		Set of $d$-dimensional functions satisfying the inequality constraints, when seen as functions of $D$ variables &  \pageref{page:C:activeset} \\
		\hline 
		$k_D, k_\activeset$ & 
		Covariance functions with input dimension $D$ and $d$ &  \pageref{page:k:D:K:J} \\
		\hline 
		$\xi_\activeset$ & 
		$d$-dimensional GP with covariance function $k_\activeset$
		&  \pageref{xi:active:set} \\
		\hline 
		$k_{\activeset}(\subdiv,\subdiv)$ & 
		Covariance matrix of the values of $\xi_\activeset$ at the knots in $\subdiv$
		&  \pageref{page:k:J:S:S} \\
		\hline 
		$	M (\ineqset_{\activeset} ) , v( \ineqset_{\activeset} )$ & 
		Matrix and vector for the inequality constraints
		&  \pageref{page:linear:constraints} \\
		\hline 
		$\widehat{\alpha}_{\activeset,\subdiv, U, v^{(n)}}$ & 
		Values at knots of $\xi_\activeset$ with highest density conditionally on interpolation and inequality constraints
		&  \pageref{page:hat:alpha} \\
		\hline 
		$Y_{\subdiv, \widehat{\alpha}_{\activeset,\subdiv,U,v^{(n)}} }$ & 
		$d$-dim. function in $E_{\subdiv}$ built from $\widehat{\alpha}_{\activeset,\subdiv, U, v^{(n)}}$ (MAP or mode)
		&  \pageref{page:map:function} \\
		\hline 
		$X_D,X_{\activeset},y^{(n)} $ & 
		Observation points and associated observed values for MaxMod
		&  \pageref{page:observation:points:values} \\
		\hline 
		$\widehat{\alpha}_{\activeset,\subdiv} ,  \widehat{Y}_{\activeset,\subdiv}$ & 
		$\widehat{\alpha}_{\activeset,\subdiv, U, v^{(n)}}$  and $Y_{\subdiv, \widehat{\alpha}_{\activeset,\subdiv,U,v^{(n)}} }$  for MaxMod
		&  \pageref{page:hat:alpha:Y:maxmod} \\
		\hline 
		$\subdiv \addknot{i} t $ & 
		Insertion of the knot $t$ to the subdivision $\subdiv$ for the variable $i$
		&  \pageref{page:S:union:i:t} \\
		\hline 
		$\subdiv \addvar{\activeset} i$ & 
		Addition of the variable $i$ to the subdivision $\subdiv$
		&  \pageref{page:S:plus:i} \\
		\hline 
		$N_{S,\activeset,i} $ & 
		Number of additional basis functions for a new knot/variable
		&  \pageref{page:N:S:J:i} \\
		\hline 
		$I_{\activeset,\subdiv}(i,t) $ & 
		Normalized $L^2$ difference between modes for a new knot/variable
		&  \pageref{page:normalized:Ldeux:difference} \\
		\hline 
		$R_{\activeset, \subdiv}(i, t) $ & 
		Reward promoting new variables and distance between knots
		&  \pageref{page:reward} \\
		\hline 
		$d(x,B)$ & 
		Distance between a point $x$ and a set $B$
		&  \pageref{page:d:t:S} \\
		\hline 
	\end{tabular}
\end{table}

\section{Convergence results} \label{section:convergence}

In Section \ref{subsection:convergence:finite:dimensional:mode}, we first provide an intermediary result that may be considered of independent interest. We show that for any sequence of multi-dimensional knots, the MAP function converges uniformly to a limit function which we define. Then, in Section \ref{subsection:convergence:maxmod} we use this intermediary result to prove the convergence of MaxMod.

\subsection{Convergence of the finite-dimensional MAP function with a general sequence of multi-dimensional knots} \label{subsection:convergence:finite:dimensional:mode}

In this section, we consider a fixed set of active variables $\activeset$ with $|\activeset| = d$, 
and a corresponding sequence of subdivisions, not necessarily obtained from MaxMod. 
Without loss of generality, we set $\activeset = \{1, \dots, d \} \subseteq \{1 , \ldots , D\}$ for $d \leq D$.
Thus we shall consider functions from $\domain$ to $\mathbb{R}$, with the fixed dimension $d$. 
This allows us to remove the dependence on $\activeset$ in the notations,
and to write more simply: 
$X = (x^{(1)},\ldots,x^{(n)}) = (x_\activeset^{(1)},\ldots,x_\activeset^{(n)})$,
$\subdivset = \subdivset_{\activeset}$, 
%$\subdiv \in \subdivset$, $U = (u_1,\ldots,u_n) \in ([0,1]^d)^n$ and $v^{(n)} = (v_1,\ldots,v_n) \in \R^n$, we let 
$\widehat{\alpha}_{\subdiv,U,v^{(n)}} = \widehat{\alpha}_{\activeset,\subdiv,U,v^{(n)}}$,
$\widehat{\alpha}_{\subdiv} = \widehat{\alpha}_{\activeset,\subdiv}$,
$\widehat{Y}_{\subdiv} = \widehat{Y}_{\activeset,\subdiv}$,
$\ineqset = \ineqset_{\activeset}$ 
and $k = k_{\activeset}$.
In particular, the sets $\ineqset$ corresponding to \eqref{eq:boundedness:D}, \eqref{eq:monotonicity:D} and \eqref{eq:convexity:D} are written:
\begin{eqnarray} 
	\ineqset &=& \{ f \in \mathcal{C}(\domain, \mathbb{R}) ; \,
	a \leq f(x) \leq  b ~ \text{for all} ~ x \in \domain \}, \label{eq:boundedness} \\
	\ineqset &=& \{ f \in \mathcal{C}(\domain, \mathbb{R}) ;  \,
	f(u) \leq f(v) ~ \text{for all} ~ u,v \in \domain, u \leq v \}  \label{eq:monotonicity} \\
	\ineqset &=& \{ f \in \mathcal{C}(\domain, \mathbb{R}) ; \, 
	\text{for all $i \in \{1,\ldots,d\}$,} ~ \text{for all $x_{\sim i} \in [0,1]^{d-1}$,} ~ \label{eq:convexity} \\
	& & \hspace{4cm} \text{the function} ~ u_i \mapsto f(u_i , x_{\sim i}) ~ \text{is convex} \}.  \nonumber
\end{eqnarray}

\subsubsection{The multiaffine extension of a multivariate function}
\label{susubsection:multiaffine:extension}

%	The definition of the limit function to which the MAP converges relies on the notion of multiaffine extension, that we now introduce. The definition and properties of the multiaffine extension may also be considered of independent interest.

For a univariate continuous function $g$ defined on a closed subset $B$ of $[0, 1]$ containing $0$ and $1$, 
let us denote by $L_B (g)$, or simply $L_B \, g$, the \emph{affine extension} of $g$ on $[0, 1]$: %, defined by, for all $t \in [0, 1]$:
\begin{equation} \label{eq:piecewiseLinearExtension}
	L_B \, g(t) = 
	\begin{cases}
		g(t) & \textrm{ if } t \in B \\ 
		\omega_{-}(t) g(t^-) + \omega_+(t) g(t^+) & \textrm{ if } t \notin B
	\end{cases} 
\end{equation}
where $t^- = \max \{u \in B; u \leq t \}$ is the closest left neighbor of $t$ in $B$, 
$t^+ = \min \{u \in B; u \geq t \}$ is the closest right neighbor of $t$ in $B$, 
$\omega_+(t) = \frac{t - t^-}{t^+ - t^-}$ and $\omega_-(t) = 1 - \omega_+(t)$.
Notice that, thanks to the assumptions on $B$, if $t \in [0,1] \setminus B$ then 
$t^-$ and $t^+$ are well-defined and distinct and belong to $B$.
Hence $\omega_-(t), \omega_+(t) \in (0, 1)$ are well-defined, and $L_B$ is well-defined by (\ref{eq:piecewiseLinearExtension}).

One can show that $L_B \, g$ is the unique continuous function $h$ equal to $g$ on $B$ 
such that $h$ is affine on all intervals of $[0, 1] \setminus B$ (see the proof of Proposition \ref{prop:multiaffine:extension}). This is clear when $B$ is a finite union of intervals, 
but it is still valid for more complex closed sets, such as an infinite sequence with an accumulation point.
Starting from this property, we now introduce the multiaffine extension of a continuous multivariate function, as defined below.
%defined on a product set $\clos = \clos_1 \times \dots \times \clos_d$, 
%where each $\clos_i$ is a closed subset of $[0, 1]$ containing $0$ and $1$.
%For $i \in \{1,\ldots,d\}$, we use the notation $\prodF{i} = \clos_1 \times \dots \times \clos_i$.
\begin{definition}[multiaffine function]
	Let $f$ be a function defined on a product set $G = G_1 \times \dots \times G_d$. 
	We say that $f$ is \emph{d-affine} or simply \emph{multiaffine} 
	if it is componentwise affine:
	for all $i=1, \dots, d$, for all $u = (u_1, \dots, u_d) \in G$, the function 
	$u_i \mapsto f(u)$ is affine on $G_i$.
\end{definition}

\begin{proposition}[definition of the multiaffine extension] \label{prop:multiaffine:extension}
	Consider a continuous multivariate function $f$ on $\clos = \clos_1 \times \dots \times \clos_d$, 
	where each $\clos_i$ is a closed subset of $[0, 1]$ containing $0$ and $1$.\\
	Then, there exists a unique continuous function $g$ defined on $\domain$, equal to $f$ on $\clos$,  
	such that, for all $i=1, \dots, d$, for all $t \in [0, 1]^d$, the univariate cut function
	$g(., t_{\sim i}): u_i \mapsto g(u_i, t_{\sim i})$ is affine on each interval of $[0, 1] \setminus \clos_i$.
	Furthermore, $g$ is obtained sequentially from $f$ by linearly extending univariate cuts. 
	More precisely, for $i \in \{1,\ldots,d\}$, denote $\prodF{i} = \clos_1 \times \dots \times \clos_i, \prodF{0} = \emptyset$, and
	let $L_i$ be defined from $\mathcal{F}(\prodF{i} \times [0,1]^{d-i},\mathbb{R})$ to $\mathcal{F}(\prodF{(i-1)}\times [0,1]^{d-i+1},\mathbb{R})$ by 
	\[
	L_i(h) (t) = L_i \, h(t) = (L_{\clos_i} [h(., t_{\sim i})])(t_i)
	\]
	for $h \in \mathcal{F}(\prodF{i}\times [0,1]^{d-i},\mathbb{R})$ and $t \in \prodF{(i-1)}\times [0,1]^{d-i+1}$.
	%Notice that we use the convention 
	%$\clos_1 \times \dots \times \clos_{d} \times [0,1]^{d-d} = \clos_1 \times \dots \times \clos_d$. 
	%M${\prodF{d} \times [0,1]^{d-d} = \clos}$. 
	Then
	$$g = L_1 \dots L_d \, f.$$
	We call $g$ the multiaffine extension of $f$, and denote it by $\extfun (f)$ (or simply $\extfun f$). Furthermore, $\extfun (f)$ is given explicitly by, for $t \in \domain$,
	\begin{equation} \label{eq:multiExtFormula}
		\extfun (f) (t)
		= \sum_{\epsilon_1, \dots, \epsilon_d \in \{-, +\}} 
		\Bigg(\prod_{j = 1}^d \omega_{\epsilon_j}(t_j) \Bigg)
		f(t_1^{\epsilon_1}, \dots, t_d^{\epsilon_d}),
	\end{equation}
	where the $2^d$ products are non-negative and sum to one.
\end{proposition}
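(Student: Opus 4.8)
The plan is to build the extension one coordinate at a time, reducing everything to the univariate operator $L_B$, and to establish existence, uniqueness, and the explicit formula in turn. The first and most delicate task is to settle the univariate claims stated just before the proposition: for a closed $B \subseteq [0,1]$ with $0,1 \in B$ and a continuous $g$ on $B$, the function $L_B\,g$ of \eqref{eq:piecewiseLinearExtension} is continuous on $[0,1]$, agrees with $g$ on $B$, is affine on every interval of $[0,1]\setminus B$, and is the unique such function. Agreement on $B$ and affineness on the gaps are immediate from the definition, and uniqueness follows because two such functions agree on $B$ and are affine with identical endpoint values on each gap. I expect the continuity of $L_B\,g$ at points of $B$ to be the main obstacle, since $B$ may have accumulation points and infinitely many gaps of shrinking size. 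I would prove it using the uniform continuity of $g$ on the compact set $B$: for $t$ near a point $t_0 \in B$, $L_B\,g(t)$ is a convex combination $\omega_-(t)g(t^-)+\omega_+(t)g(t^+)$ with $t^-$ squeezed towards $t_0$, while the contribution of $g(t^+)$ is controlled either because $t^+$ is also close to $t_0$ or because the weight $\omega_+(t)$ is small when the enclosing gap is large. Recording also that $L_B$ is linear and satisfies $\|L_B\,g\|_\infty \le \|g\|_\infty$ (each value being a convex combination of values of $g$) will be useful later.

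Next I would set up the operators $L_i$ and show they preserve continuity. Each $L_i$ applies $L_{\clos_i}$ in the $i$-th variable with the remaining variables as parameters. To see that $L_i$ maps continuous functions on $\prodF{i}\times[0,1]^{d-i}$ to continuous functions on $\prodF{(i-1)}\times[0,1]^{d-i+1}$, I would split the increment $L_i h(t) - L_i h(t^0)$ into a term bounded, using linearity of $L_{\clos_i}$ together with the bound $\|L_{\clos_i} w\|_\infty \le \|w\|_\infty$, by $\|h(\cdot,t_{\sim i}) - h(\cdot,t^0_{\sim i})\|_\infty$, which vanishes as $t_{\sim i}\to t^0_{\sim i}$ by uniform continuity of $h$ on its compact domain, and a second term equal to the univariate increment of $L_{\clos_i}(h(\cdot,t^0_{\sim i}))$ at $t^0_i$. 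This reduces multivariate joint continuity to the univariate statement already proved.

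For existence I would set $g = L_1\cdots L_d\,f$. Starting from $f$ continuous on $\clos = \prodF{d}$ and applying the operators in turn, the previous step shows $g$ is continuous on $\domain$, and each $L_i$ leaves values unchanged where the $i$-th coordinate lies in $\clos_i$, so $g = f$ on $\clos$. The application of $L_i$ makes $g$ affine in the $i$-th variable on each interval of $[0,1]\setminus\clos_i$; I would check this property survives the later applications $L_{i-1},\dots,L_1$, since those replace a value by a convex combination of values at points differing only in a coordinate $j<i$, and a convex combination of functions affine in the $i$-th variable is again affine in that variable. Hence $g$ has the required componentwise affine-on-gaps property in every coordinate.

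Finally, uniqueness and the explicit formula \eqref{eq:multiExtFormula} come from a single reduction argument. Given any admissible $g$ and any $t\in\domain$, for each coordinate with $t_i\notin\clos_i$ the affineness on the gap $(t_i^-,t_i^+)$ forces the value at $t$ to equal $\omega_-(t_i)\,g(t_i^-,t_{\sim i}) + \omega_+(t_i)\,g(t_i^+,t_{\sim i})$, with weights depending only on $\clos_i$ and $t_i$, not on $g$; for $t_i\in\clos_i$ one sets $t_i^-=t_i^+=t_i$. Iterating over all $d$ coordinates expresses $g(t)$ as the function-independent combination $\sum_{\epsilon\in\{-,+\}^d}\big(\prod_{j=1}^d \omega_{\epsilon_j}(t_j)\big)\, g(t_1^{\epsilon_1},\dots,t_d^{\epsilon_d})$ of values at points of $\clos$, where $g=f$. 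This simultaneously proves uniqueness (the right-hand side does not depend on which admissible $g$ we started from) and yields \eqref{eq:multiExtFormula} with $g=\extfun(f)$. The products are non-negative because each $\omega_{\epsilon_j}(t_j)\in[0,1]$, and they sum to one since $\sum_{\epsilon\in\{-,+\}^d}\prod_{j=1}^d \omega_{\epsilon_j}(t_j) = \prod_{j=1}^d\big(\omega_-(t_j)+\omega_+(t_j)\big) = 1$.
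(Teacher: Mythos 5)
Your proposal is correct and follows the same overall architecture as the paper (define $g = L_1\cdots L_d f$, verify it is admissible, then show any admissible $g$ is forced to coincide with it), but the execution of the two technical steps differs in ways worth noting. For continuity, the paper works directly in the multivariate setting: it fixes a convergent sequence $t_n \to t$, extracts subsequences, and runs a case analysis on whether $t_{n,i}$ lies in $\clos_i$, whether it approaches from the left or right, and whether the limits $t_{\infty,i}^\pm$ collapse onto $t_i$. You instead isolate a univariate continuity lemma for $L_B$ (proved via uniform continuity of $g$ on the compact $B$ together with the dichotomy ``either $t^+$ is close to $t_0$ or $\omega_+(t)$ is small''), record that $L_{\clos_i}$ is linear and a sup-norm contraction, and reduce the multivariate continuity of $L_i h$ to the univariate case by splitting the increment into a ``parameter'' part controlled by uniform continuity of $h$ and a univariate part; this is more modular and avoids the subsequence bookkeeping. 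For the explicit formula \eqref{eq:multiExtFormula}, the paper proves it by a separate induction on the composition $L_i\cdots L_d$ and then checks the affine-on-gaps property by reading it off that formula, whereas you derive the formula directly from the admissibility conditions (affineness on each gap plus continuity force the value at $t$ to be the stated convex combination of values on $\clos$), which gives uniqueness and the formula in one stroke; you then verify affineness-on-gaps by tracking that each $L_i$ introduces it in coordinate $i$ and that the subsequent operators, being convex combinations with $t_i$-independent weights, preserve it. Both routes are complete; yours trades the paper's explicit induction for a cleaner forced-value expansion, at the cost of having to argue separately that the affine property survives the remaining compositions.
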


A two-dimensional illustration of the multiaffine extension is provided in Figure \ref{fig:multiaffine:extension}. As observed on this figure, a feature of the multiaffine extension is the extension of the definition of a continuous function $f$ on hyper-rectangles where $f$ is defined only at the $2^d$ vertices (for instance the square $[0.1,0.4]^2$ in the figure). The connection with the hat basis functions of Section \ref{sec:finite:dimensional:constrained:GP} is that the extension at one of these hyper-rectangles coincides with the expression of  $Y_{\subdiv,\alpha}$ from \eqref{eq:Y:S:alpha}, when the closest knots in $\subdiv$ to the hyper-rectangle coincide with its vertices. We refer to the last item of Remark \ref{remark:multi:affine:extension} for a more formal statement.

\begin{figure}[t!]
	\centering
	\includegraphics[width=\linewidth, trim = 0cm 3cm 0cm 2cm]{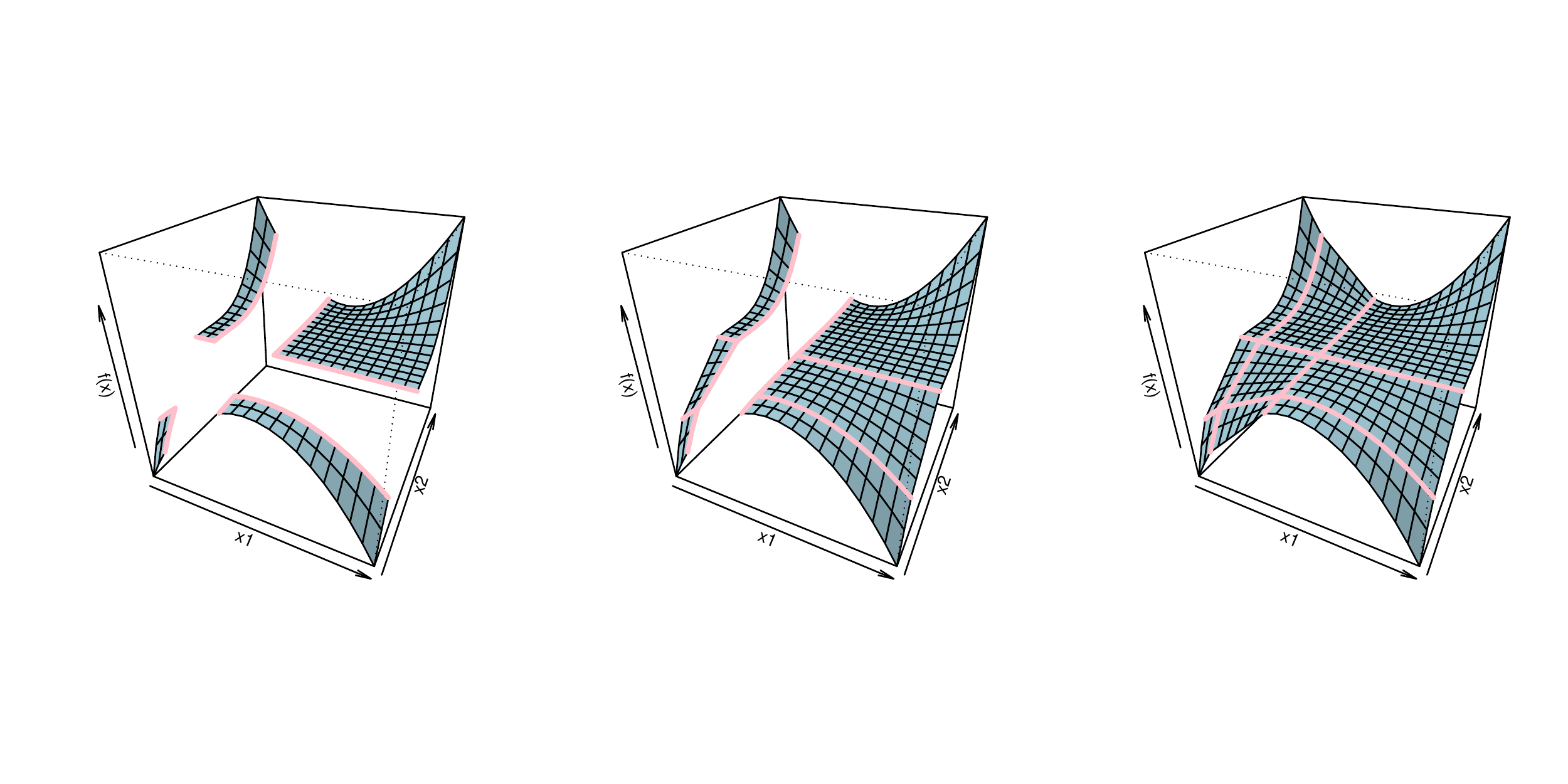}
	\caption{\small Sequential construction of the multiaffine extension. Illustration on the $2$-dimensional function  
		${f(x) = (x_1 - 0.5)^2 (x_2 - 0.5)^3}$, defined on $\clos_1 \times \clos_2$, with $\clos_1 = \clos_2 = [0, 0.1] \cup [0.4, 1]$. The three perspective plots represent, from left to right: $f$, $L_2 f$ and $\extfun f = L_1 L_2 f$.}
	\label{fig:multiaffine:extension}
\end{figure}

\clearpage
\begin{remark} \label{remark:multi:affine:extension}
	{~}
	\begin{itemize}
		\item As a direct consequence of Proposition~\ref{prop:multiaffine:extension},
		all $q$-dimensional cuts of the multiaffine extension, i.e. $(t_{i_1}, \dots, t_{i_q}) \mapsto \extfun f(t)$, are $q$-affine 
		on any hypercube in the product of complementary sets $\prod_{\ell=1}^q ([0,1] \setminus \clos_{i_\ell}).$
		%Except for special cases, e.g. when $f$ is piecewise multilinear (see Prop.~\ref{prop:extension:properties}, 2.), 
		%$\extfun f$ is not multilinear in the whole domain $\domain$.
		\item In Proposition~\ref{prop:multiaffine:extension}, the multiaffine extension is obtained by composing 1-dimensional cuts in a specific order. By uniqueness, permuting the order will give the same result.
		\item In one dimension, extending linearly a function by using the closest neighbors as in \eqref{eq:piecewiseLinearExtension}
		can be viewed as computing the posterior mean of the Brownian motion \cite[preface, page xiv]{berlinet2011reproducing}.
		In general, one could define the multiaffine extension of Proposition \ref{prop:multiaffine:extension} with the Brownian sheet.
		However, this would require to condition the Brownian sheet on a continuous set and would add technicalities (see, e.g., \cite{bay2010continuousBoundaries} for more details).
		This is why we have chosen to introduce the multiaffine extension with basic tools.
		\item Consider $\subdiv \in \subdivset$ and the set $ \clos_{\subdiv} =  \prod_{j=1}^d ( \subdiv_j \cap [0,1] )$, where each $ \subdiv_j \cap [0,1]$ is a closed subset of $[0,1]$ containing $0$ and $1$.
		Then, by Proposition~\ref{prop:extension:properties}, 
		%one can simply show that, from \eqref{eq:multiExtFormula} and \eqref{eq:piSm}, 
		for all $f \in \mathcal{C}( \domain , \mathbb{R} )$,
		\begin{equation} \label{eq:pi:S:f:equal:multiaffine:extension}
			\pi_{\subdiv} (f)
			=
			P_{ \clos_{\subdiv} \to [0,1]^d} (f_{| \clos_{\subdiv}}).
		\end{equation}
		Hence, we can also interpret the multiaffine extension as a generalization of the projection $\pi_{\subdiv}$ to an infinite number of knots.
	\end{itemize}
\end{remark}
Now, we gather below some properties of the multiaffine extension.

\begin{proposition} \label{prop:extension:properties}
	{~}
	\begin{enumerate}
		\item The map $f \mapsto P_{\clos \to \domain}(f)$ is affine and $1$-Lipschitz 
		from $\mathcal{C}(\clos, \R)$
		to $\mathcal{C}([0, 1]^d, \R)$,
		equipped with the $L^\infty$ norm. In particular, it preserves uniform convergence.
		\item If $\subdiv$ is a $d$-dimensional subdivision such that 
		$ S \cap [0, 1]^d \subseteq \clos$,
		then any piecewise multilinear function constructed from $\subdiv$
		coincides with the multiaffine extension of its restriction to $\clos$:
		$$ \forall f \in E_\subdiv: \qquad \extfun \left( f_{\vert \clos} \right) = f .$$
	\end{enumerate}
\end{proposition}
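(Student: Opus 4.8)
The plan is to treat the two items separately: item 1 follows directly from the explicit formula \eqref{eq:multiExtFormula}, while item 2 follows from the uniqueness characterization in Proposition~\ref{prop:multiaffine:extension}. For item 1, I would first read off from \eqref{eq:multiExtFormula} that, for each fixed $t \in \domain$, $\extfun(f)(t)$ is a linear combination of the values $f(t_1^{\epsilon_1}, \dots, t_d^{\epsilon_d})$ whose coefficients $\prod_{j=1}^d \omega_{\epsilon_j}(t_j)$ depend only on $t$ and not on $f$. Hence $f \mapsto \extfun(f)$ is linear, and in particular affine. For the $1$-Lipschitz property in the $\|\cdot\|_\infty$ norm, I would use linearity to reduce to showing $\|\extfun(h)\|_\infty \le \|h\|_\infty$, applied to $h = f - g$: bounding \eqref{eq:multiExtFormula} by the triangle inequality and using that the $2^d$ coefficients are non-negative and sum to one gives $|\extfun(h)(t)| \le \|h\|_\infty$ for every $t$, whence the claim. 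Preservation of uniform convergence is then immediate, since $\|\extfun(f_n) - \extfun(f)\|_\infty = \|\extfun(f_n - f)\|_\infty \le \|f_n - f\|_\infty$.

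For item 2, I would invoke the uniqueness statement of Proposition~\ref{prop:multiaffine:extension}: $\extfun(f_{\vert \clos})$ is the \emph{unique} continuous function $g$ on $\domain$ agreeing with $f$ on $\clos$ and such that, for all $i$ and all $t$, the cut $g(\cdot, t_{\sim i})$ is affine on each interval of $[0,1] \setminus \clos_i$. It therefore suffices to verify that $f$ itself, regarded as a function on $\domain$, meets these two conditions. Continuity of $f$ and the agreement of $f$ with $f_{\vert \clos}$ on $\clos$ are immediate because $f \in E_\subdiv \subseteq \mathcal{C}(\domain, \R)$. For the cut condition, I would use that every element of $E_\subdiv$ is componentwise piecewise linear, with the breakpoints in the $i$-th variable contained in $\subdiv_i \cap [0,1]$. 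Since the hypothesis $\subdiv \cap [0,1]^d \subseteq \clos$ amounts to $\subdiv_i \cap [0,1] \subseteq \clos_i$ for every $i$, no breakpoint of $f(\cdot, t_{\sim i})$ lies in the open set $[0,1] \setminus \clos_i$, so $f(\cdot, t_{\sim i})$ is affine on each connected component of that set. Uniqueness then yields $f = \extfun(f_{\vert \clos})$.

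Neither item is technically deep once \eqref{eq:multiExtFormula} and the uniqueness statement are available. The only step that requires care is the cut condition in item 2, where I must ensure that the inclusion $\subdiv_i \cap [0,1] \subseteq \clos_i$ really confines all breakpoints of the piecewise-linear cut to $\clos_i$, leaving the complementary intervals free of breakpoints; this is precisely the role of the hypothesis $\subdiv \cap [0,1]^d \subseteq \clos$, which cannot be omitted.
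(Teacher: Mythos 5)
Your proof is correct, but it takes a route that differs from the paper's in both items, most noticeably in item 2. For item 1, the paper establishes linearity by composing the affine maps $L_i$ and proves the $1$-Lipschitz bound by iterating the one-dimensional fact that $L_B g$ takes values in the range of $g$, peeling off one coordinate at a time ($L_d$, then $L_{d-1}L_d$, etc.); you instead read everything off the closed-form barycentric formula \eqref{eq:multiExtFormula}, using that the $2^d$ weights are non-negative and sum to one. Both arguments are sound and of comparable length; yours leans on the explicit formula, the paper's on the sequential construction. For item 2, the paper re-runs the sequential construction: it checks that each univariate cut of $Y_{\subdiv,\alpha}$ is piecewise linear with all knots in $\clos_i$, so that restricting to $\clos_i$ and re-extending with $L_{\clos_i}$ reproduces the same function, and concludes by induction over the coordinates. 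You instead invoke the uniqueness clause of Proposition~\ref{prop:multiaffine:extension} directly, verifying that $f \in E_\subdiv$ itself satisfies the characterizing properties (continuity, agreement with $f_{\vert\clos}$ on $\clos$, and affineness of each cut on every component of $[0,1]\setminus\clos_i$ because the breakpoints lie in $\subdiv_i\cap[0,1]\subseteq\clos_i$). This is arguably the cleaner argument, since the uniqueness statement was proved precisely to enable such identifications, and it makes transparent exactly where the hypothesis $\subdiv\cap[0,1]^d\subseteq\clos$ is used; the paper's inductive version has the minor advantage of not relying on the uniqueness clause and of mirroring the construction of $\extfun$ step by step. No gap in either item.
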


\begin{corollary} \label{cor:vertice:to:hypercube}
	For $x \in \domain$, let $f,g$ be $d$-affine on the hypercube $\Delta = \prod_{j=1}^d [x_j^- , x_j^+]$, and coincide on its $2^d$ vertices. Then they are equal on $\Delta$. 
\end{corollary}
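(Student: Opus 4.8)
The plan is to reduce the statement to a vanishing result by considering the difference $h = f - g$. Since both $f$ and $g$ are $d$-affine on $\Delta$, and componentwise affinity is preserved under subtraction, $h$ is again $d$-affine on $\Delta$; moreover $h$ vanishes at all $2^d$ vertices of $\Delta$. It therefore suffices to show that a $d$-affine function on a hypercube that vanishes at every vertex vanishes identically, and I would prove this by induction on the dimension $d$.

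For the base case $d = 1$, the function $h$ is affine on the interval $[x_1^-, x_1^+]$ and vanishes at both endpoints; an affine function on an interval is determined by its values at two distinct points, so $h \equiv 0$ there (the degenerate case $x_1^- = x_1^+$ being trivial, as $\Delta$ is then a single point).

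For the inductive step, I would first control the behaviour along the last coordinate. For each vertex $w \in \prod_{j=1}^{d-1}\{x_j^-, x_j^+\}$ of the lower-dimensional face, the univariate cut $t_d \mapsto h(w, t_d)$ is affine, because $h$ is componentwise affine, and it vanishes at $t_d = x_d^-$ and at $t_d = x_d^+$, hence on the whole segment $[x_d^-, x_d^+]$. Then, fixing any $t_d \in [x_d^-, x_d^+]$, the function $(t_1, \dots, t_{d-1}) \mapsto h(t_1, \dots, t_{d-1}, t_d)$ is $(d-1)$-affine and vanishes at all $2^{d-1}$ vertices $w$ of the $(d-1)$-dimensional cube; by the induction hypothesis it vanishes identically. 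Since $t_d$ is arbitrary, $h \equiv 0$ on $\Delta$, which yields $f = g$ on $\Delta$.

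I do not expect a genuine obstacle here, since the statement is essentially the classical fact that a multiaffine function on a hypercube is determined by its corner values; the only points requiring care are the bookkeeping of which coordinates are frozen at each stage and the handling of possibly degenerate edges $x_j^- = x_j^+$. An alternative, equally short route would be to invoke the explicit corner-interpolation formula \eqref{eq:multiExtFormula} after rescaling $\Delta$ affinely onto $\domain$, but the direct induction seems cleanest and sidesteps the hypothesis of Proposition~\ref{prop:multiaffine:extension} that each factor set contain $0$ and $1$, which the vertices of $\Delta$ need not satisfy.
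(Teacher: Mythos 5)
Your proof is correct, and it takes a genuinely different route from the paper. The paper's argument leans on the multiaffine-extension machinery of Proposition~\ref{prop:multiaffine:extension}: it observes that $f$ and $g$, being $d$-affine, are polynomials of degree at most one in each variable and hence extend to all of $\domain$; it then introduces the closed product set $\clos = \prod_{j=1}^d \bigl([0,1] \setminus (x_j^-, x_j^+)\bigr)$, invokes the uniqueness statement of that proposition to get $f = \extfun(f_{|\clos})$ and $g = \extfun(g_{|\clos})$, and concludes from the explicit formula \eqref{eq:multiExtFormula} (whose weights on $\Delta$ involve only the vertex values) that the two extensions agree. Your argument instead passes to $h = f - g$ and runs a self-contained induction on $d$: freeze the last coordinate edges to kill $h$ on the $2^{d-1}$ vertical segments over the lower-dimensional vertices, then apply the induction hypothesis slice by slice. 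What your approach buys is independence from Proposition~\ref{prop:multiaffine:extension} altogether — in particular from the requirement that the factor sets contain $0$ and $1$, which the paper has to engineer by embedding $\Delta$ into $\domain$ and choosing $\clos$ accordingly — and it makes the corollary available as a standalone fact about multiaffine interpolation. What the paper's route buys is brevity given the machinery already built, and it keeps the corollary visibly tied to the uniqueness and explicit-formula statements that are reused elsewhere in the convergence proofs. Your handling of the degenerate case $x_j^- = x_j^+$ is also appropriate; the paper's proof glosses over it.
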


\subsubsection{Technical conditions}

We  let $m_0 \in \mathbb{N}$ and consider a sequence of subdivisions $(\subdiv^{(m)})_{m \geq m_0}$ with $\subdiv^{(m)} \in \subdivset $ for $m \geq m_0$ (not necessarily the sequence obtained from MaxMod).
Let the subdivisions be nested (i.e. $S_i^{(m)} \subseteq S_i^{(m+1)}$ for $i \in \{1 , \ldots d\}$).
We will show the convergence of the sequence of mode functions obtained from the sequence of subdivisions $(\subdiv^{(m)})_{m \geq m_0}$. In that view, we now introduce a list of technical conditions.

To the set $\ineqset$, that is interpreted as the set of functions satisfying inequality constraints, we associate a set 
$\strictineqset \subseteq \mathcal{C}$, that we can choose and that is interpreted as a set of functions satisfying corresponding strict inequality constraints. Let us explicitly show how $\strictineqset$ is chosen when $\ineqset$ is given by one of \eqref{eq:boundedness}, \eqref{eq:monotonicity} or \eqref{eq:convexity}.

First, consider that $\ineqset$ is given by  \eqref{eq:boundedness}, 
with $- \infty <a < b < + \infty$
(the cases where either $a = - \infty$ or $b=+ \infty$ are similar). 
Then we choose $\strictineqset$ as the set of continuous functions that are strictly between $a$ and $b$.
Second, consider that $\ineqset$ is given by  \eqref{eq:monotonicity}. 
Let us write $u <v$ when $u \leq v$ and $u \neq v$.
We say that a function $f$ from $\domain$ to $\R$ is
strictly increasing if $f(u) < f(v)$ for all
$u,v \in \domain$, $u < v$.
Then we chose $\strictineqset$ as the set of continuous strictly increasing functions. 
Finally, consider that $\ineqset$ is given by  \eqref{eq:convexity}. Then we choose $\strictineqset$ as
\begin{eqnarray*}
	\{ f \in \mathcal{C}(\domain , \mathbb{R}),
	&& ~ \text{for all $i \in \{1,\ldots,d\}$,} ~ \text{for all $x_{\sim i} \in [0,1]^{d-1}$},  \\
	&& \text{the function} ~ u_i \mapsto f(u_i , x_{\sim i}) ~ \text{is strictly convex} \}.
\end{eqnarray*}
For other inequality sets $\ineqset$, we emphasize that we are free to choose the set $\strictineqset \subseteq \ineqset$ for which the technical conditions given below (Conditions \ref{cond:non:empty} and \ref{cond:interior:non:empty}) hold. 

\begin{condition}[initial knots flexibility, 1] \label{cond:non:empty}
	The set 
	$\{ \alpha \in A_{\subdiv^{(m_0)}};
	Y_{\subdiv^{(m_0)},\alpha} \in \interpset_{X,y^{(n)}}
	\cap \strictineqset
	\}$ is non-empty.
\end{condition}

The previous condition means that one can construct, from the initial subdivision, a finite-dimensional function that satisfies the interpolation and strict inequality constraints. 
With the choice of $\strictineqset$ for boundedness, Condition \ref{cond:non:empty} implies that $a < y_1 <b, \ldots ,a < y_n <b$ and that there are 
enough
knots in the initial subdivision to generate an interpolating function that is strictly between $a$ and $b$.

With the choice of $\strictineqset$ for monotonicity, Condition \ref{cond:non:empty} implies $y_i < y_j$ for $x_i < x_j$ ($i,j=1,\ldots,n$) and that there are sufficiently many knots in the initial subdivision to generate a strictly increasing interpolating function.

Consider finally componentwise convexity. For $U = (u_1,\ldots,u_r) \in (\domain)^r$ and $v^{(r)} = (v_1,\ldots,v_r) \in \R^r$, let us say that $U$ and $v^{(r)}$ are compatible with strict convexity if there exists a function $g$ in $\strictineqset$ such that $g(u_i) = v_i$, $i=1,\ldots,r$. 
Then, Condition \ref{cond:non:empty} implies that $X$ and $y^{(n)}$ are compatible with strict convexity and that there are sufficiently many knots in the initial subdivision to generate a componentwise strictly convex interpolating function.

\begin{condition}[initial knots flexibility, 2] \label{cond:spans:Rn}
	$\big\{
	\left(
	Y_{\subdiv^{(m_0)} , \alpha} (x^{(i)})
	\right)_{i=1,\ldots,n}
	;\linebreak[1]
	\alpha \in \subdiv^{(m_0)} 
	\big\}
	=
	\mathbb{R}^n.$
\end{condition}
This condition means that for any possible $n$-dimensional observation vector (not only $y^{(n)}$), there exists a combination of basis functions based only on the initial knots that interpolates this observation vector. This condition requires that there are enough initial knots (at least $n$) and that these knots are located adequately. This condition is mild, since the number of knots increases to infinity. For instance, in dimension $d=1$, it is sufficient that there is one initial knot in between each pair of observation points.

For a subset $B$ in a metric space with distance $\mathrm{dist}$, its interior is written $\mathrm{int}_{\mathrm{dist}}(B)$ and its closure is written $\overline{B}$. 
In particular, for a finite dimensional subspace $A$, and for $ \mathcal{B} \subseteq  \mathcal{C}( A,\mathbb{R} ) $, we shall consider $\mathrm{int}_{||.||_{\infty}}( \mathcal{B} ) $ relatively to the $L^{\infty}$ norm.
We let $\Hilb$ be the RKHS (see for instance \cite{berlinet2011reproducing}) of $k$ and we write $||.||_{\Hilb}$ for the RKHS norm on $\Hilb$. 
We shall then consider $\mathrm{int}_{||.||_{\Hilb}}(\Hilb \cap \ineqset)$, where the interior is defined w.r.t. the RKHS norm of $k$ on $\Hilb$.

\begin{condition}[RKHS interior] \label{cond:interior:non:empty}
	For all $h \in \strictineqset$, for all $r \in \mathbb{N}$, $U = (u_1,\ldots,u_r) \in (\domain)^r$, 
	with $u_1,\ldots,u_r$ two-by-two distinct, 
	%letting $v_1 = h(u_1),\ldots,v_r=h(u_r)$ and $v^{(r)} = (v_1,\ldots,v_r) \in \R^r$, 
	letting $v^{(r)} = (h(u_1), \ldots, h(u_r)) \in \R^r$, 
	the set $\mathrm{int}_{||.||_{\Hilb}}(\Hilb \cap \ineqset) \cap \interpset_{U,v^{(r)}}$ is non-empty.
\end{condition}

With the choice of $\strictineqset$ for boundedness, Condition \ref{cond:interior:non:empty} holds when
$\mathrm{int}_{||.||_{\Hilb}}(\Hilb \cap \ineqset) \cap \interpset_{U,v^{(r)}}$ is non-empty
for all $U = (u_1,\ldots,u_r) \in (\domain)^r$ and $v^{(r)} = (v_1,\ldots,v_r) \in \R^r$, with $u_1,\ldots,u_r$ two-by-two distinct and $a < v_1 < b, \ldots,a < v_n < b$. 
We also have the following lemma.

\begin{lemma} \label{lem:non-empty:contains}
	With $\ineqset$ given by \eqref{eq:boundedness} with $- \infty <a < b < + \infty$ and when $\strictineqset$ is the set of continuous functions that are strictly between $a$ and $b$, we have $\Hilb \cap \strictineqset \subseteq \mathrm{int}_{||.||_{\Hilb}} (\Hilb \cap \ineqset) $.
\end{lemma}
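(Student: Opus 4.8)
The plan is to exploit the standard fact that, because $k$ is continuous on the compact set $\domain \times \domain$, the RKHS norm dominates the supremum norm on $\Hilb$. Concretely, by the reproducing property, for any $g \in \Hilb$ and $x \in \domain$ one has $|g(x)| = |\langle g , k(\cdot,x)\rangle_{\Hilb}| \le \|g\|_{\Hilb}\sqrt{k(x,x)}$, so setting $\kappa = \sup_{x \in \domain}\sqrt{k(x,x)}$, which is finite by continuity and compactness, gives $\|g\|_\infty \le \kappa \|g\|_{\Hilb}$ for all $g \in \Hilb$. The same continuity of $k$ guarantees $\Hilb \subseteq \mathcal{C}(\domain,\R)$, so every element of $\Hilb$ can be meaningfully tested against the constraint set $\ineqset$.

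Next I would fix $h \in \Hilb \cap \strictineqset$, i.e. $h \in \Hilb$ with $a < h(x) < b$ for all $x \in \domain$, and extract a uniform margin. Since $h$ is continuous and $\domain$ is compact, $h$ attains its extrema; writing $m = \min_{x} h(x)$ and $M = \max_{x} h(x)$, we have $a < m \le M < b$, so the margin $\epsilon := \min(m - a,\, b - M)$ is strictly positive and $a + \epsilon \le h(x) \le b - \epsilon$ for every $x$.

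Finally I would show that the RKHS-ball $\{\, g \in \Hilb : \|g - h\|_{\Hilb} < \epsilon/\kappa \,\}$ is contained in $\Hilb \cap \ineqset$. Indeed, for such $g$ the norm domination gives $\|g - h\|_\infty \le \kappa \|g - h\|_{\Hilb} < \epsilon$, whence $a < h(x) - \epsilon < g(x) < h(x) + \epsilon < b$ for all $x$; thus $g \in \ineqset$ (in fact $g \in \strictineqset$). Since this ball is an $\|\cdot\|_{\Hilb}$-neighborhood of $h$, we conclude $h \in \mathrm{int}_{\|\cdot\|_{\Hilb}}(\Hilb \cap \ineqset)$, which is the claimed inclusion.

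The argument is essentially a routine combination of the reproducing inequality with compactness, so there is no serious obstacle; the one point to handle cleanly is the degenerate case $\kappa = 0$, which forces $k \equiv 0$ and $\Hilb = \{0\}$, where $\Hilb \cap \strictineqset$ is nonempty only if $a < 0 < b$ and the inclusion holds trivially. The main care is thus in stating the two RKHS facts (norm domination and continuity of RKHS elements) precisely, since it is these that convert the topological interior statement into the elementary margin estimate above.
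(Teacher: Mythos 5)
Your proof is correct and follows essentially the same route as the paper's: both rest on the domination $\|g\|_\infty \leq C \|g\|_{\Hilb}$ (which the paper cites from the literature and you derive from the reproducing property) combined with the compactness margin showing that a strictly bounded continuous function sits at positive sup-norm distance from the boundary of $\ineqset$. The paper merely packages the margin step as the observation $\strictineqset = \mathrm{int}_{\|\cdot\|_\infty}(\ineqset)$, which is exactly your explicit $\epsilon = \min(m-a, b-M)$ argument.
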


From Lemma \ref{lem:non-empty:contains},  the set $\mathrm{int}_{||.||_{\Hilb}}(\Hilb \cap \ineqset) \cap \interpset_{U,v^{(r)}}$ is non-empty if there exists $h \in \Hilb$ that is strictly between $a$ and $b$ and interpolates $(u_1,v_1),\ldots,(u_v,v_n)$. This can be interpreted as requiring $\Hilb$ to be rich enough, and is not restrictive (it is also required in \cite{bay2016generalization,bay2017new}).

With the choice of $\strictineqset$ for monotonicity,
Condition \ref{cond:interior:non:empty} holds when
the set $\mathrm{int}_{||.||_{\Hilb}}(\Hilb \cap \ineqset) \cap \interpset_{U,v^{(r)}}$ is non-empty
for all $U = (u_1,\ldots,u_r) \in (\domain)^r$ and $v^{(r)} = (v_1,\ldots,v_r) \in \R^r$, with $u_1,\ldots,u_n$ two-by-two distinct, with $v_i < v_j$ for $u_i < u_j$ ($i,j=1,\ldots,n$).
Again, this is not restrictive, it is also required in \cite{bay2016generalization,bay2017new} and can be interpreted as $\Hilb$ being rich enough.

With the choice of $\strictineqset$ for componentwise convexity, Condition \ref{cond:interior:non:empty} holds when
$\mathrm{int}_{||.||_{\Hilb}}(\Hilb  \cap \ineqset)\linebreak[1]  \cap \interpset_{U,v^{(r)}}$ is non-empty
for all $U = (u_1,\ldots,u_r) \in (\domain)^r$ and $v^{(r)} = (v_1,\ldots,v_r) \in \R^r$ that are compatible with strict convexity. Again this is not restrictive and is also required in \cite{bay2016generalization,bay2017new}.

Next, we introduce two conditions related to the stability of the constraint set.

\begin{condition}[constraint set stability by projection] \label{cond:piSC:subset:C}
	For all subdivision $\subdiv \in \subdivset, \pi_{\subdiv} (\ineqset) \subseteq \ineqset $.
\end{condition}
\begin{condition}[constraint set stability by multiaffine extension] \label{cond:PfC:subset:C}
	For $f \in \ineqset$, with $f_{|\clos}$ the restriction of $f$ to $\clos$, we have $P_{\clos \to \domain} (f_{|F}) \in \ineqset$.  
\end{condition}

By Lemma~\ref{lem:cond:PfC:subset:C} below, these two conditions hold in the cases of boundedness, monotonicity and componentwise convexity.
About Condition \ref{cond:piSC:subset:C}, 
this is a consequence of Remark \ref{remark:multi:affine:extension} and Lemma~\ref{lem:cond:PfC:subset:C}, applied to the set $\clos_S = \subdiv \cap \domain$, that is closed, of product form, and contains $\{0,1\}^d$. Notice that the fact that Condition \ref{cond:piSC:subset:C} holds for boundedness and monotonicity constraints  has been already proved in \cite{maatouk2017gaussian}.
%, shows that Condition \ref{cond:piSC:subset:C} holds}.
\begin{lemma} \label{lem:cond:PfC:subset:C}
When $\ineqset$ is given by one of \eqref{eq:boundedness}, \eqref{eq:monotonicity} or \eqref{eq:convexity}, Condition \ref{cond:PfC:subset:C} holds.
\end{lemma}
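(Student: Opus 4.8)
The plan is to exploit the sequential, coordinatewise structure of the multiaffine extension. By Proposition~\ref{prop:multiaffine:extension} we have $\extfun = L_1 \cdots L_d$, where each $L_i$ modifies only the $i$-th coordinate, extending it from $\clos_i$ to $[0,1]$ by one-dimensional affine interpolation on the gaps of $[0,1] \setminus \clos_i$. Hence it suffices to show that a single coordinate extension $L_i$ maps the constraint set into itself, checking the constraint on the successively enlarged domains $\prodF{i} \times [0,1]^{d-i}$; an induction over the $d$ factors then delivers the claim. The one structural identity I would use throughout is that, for $t_i \notin \clos_i$,
\[
(L_i h)(t) = \omega_-(t_i)\, h(t_i^-, t_{\sim i}) + \omega_+(t_i)\, h(t_i^+, t_{\sim i}),
\]
a convex combination, with nonnegative weights summing to one, of two cuts of $h$ in the coordinate $i$, where $t_i^-,t_i^+\in\clos_i$; and that $L_i h = h$ on $\clos_i$.

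For boundedness I would bypass the reduction and argue directly from the explicit formula \eqref{eq:multiExtFormula}: $\extfun(f_{|\clos})(t)$ is a convex combination of the values $f(t_1^{\epsilon_1},\dots,t_d^{\epsilon_d})$ at the $2^d$ vertices, all of which lie in $\clos$ and hence take values in $[a,b]$, so the combination stays in $[a,b]$.

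For monotonicity and convexity I would use the reduction and split each single extension $L_i$ into two cases. In the \emph{cross} case (preservation in a coordinate $j\neq i$), the displayed identity shows that, as a function of $t_j$, $L_i h$ is a nonnegative combination of the cuts $h(t_i^-,\cdot)$ and $h(t_i^+,\cdot)$, each monotone (resp.\ convex) in $t_j$ by the inductive hypothesis, and monotonicity and convexity are stable under such nonnegative combinations (on $\clos_i$ one has $L_i h = h$, and continuity glues the two regimes). In the \emph{own} case (coordinate $i$ itself), for monotonicity I would use that the linear interpolant of a function nondecreasing at its nodes is nondecreasing, comparing any two arguments through their surrounding neighbours $t^-,t^+\in\clos_i$; for convexity I would use the increasing-chord-slope characterization: since the cut of $f$ is convex on $\clos_i$, for $x<p<q$ in $\clos_i$ one has $\frac{\psi(p)-\psi(x)}{p-x} \le \frac{\psi(q)-\psi(p)}{q-p}$, and this forces the slopes of the interpolant (constant on each gap, equal to one-sided derivatives of $\psi$ at points of $\clos_i$) to be nondecreasing across all of $[0,1]$, hence the interpolant convex.

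The main obstacle I anticipate is exactly this own-coordinate convexity step: one must verify that linear interpolation on the gaps of a function convex on $\clos_i$ remains convex for an \emph{arbitrary} closed $\clos_i$ (not merely a finite union of intervals), so the chord-slope comparison has to be arranged to cover accumulation points of $\clos_i$ and their adjacent gaps uniformly, rather than relying on finitely many breakpoints. Once that lemma is in place, the boundedness, monotonicity, and all cross-coordinate statements reduce to routine stability facts for convex combinations.
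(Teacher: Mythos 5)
Your proposal is correct and its overall architecture matches the paper's: boundedness is handled identically (the explicit formula \eqref{eq:multiExtFormula} exhibits $\extfun(f_{|\clos})(t)$ as a convex combination of values of $f$ at vertices in $\clos$), and for monotonicity and convexity both arguments reduce the multivariate claim to a one-dimensional lemma about $L_{\clos_i}$ applied to cuts of $f$ along coordinate $i$, with the cross coordinates absorbed into nonnegative weights. The organisational difference is that you run an induction over the sequential composition $L_1\cdots L_d$ on the successively enlarged domains, whereas the paper works directly from \eqref{eq:multiExtFormula} and regroups the sum over $\epsilon_{\sim i}$ so that the inner sum over $\epsilon_i$ is exactly $L_{\clos_i} f_{|\clos}(\cdot, x_{\sim i}^{\epsilon})$; the two reductions are interchangeable. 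The genuine divergence is in the one-dimensional convexity lemma (the paper's Lemma \ref{lem:LB:to:convex:function}), which you correctly identify as the only delicate step for an arbitrary closed $\clos_i$ with accumulation points: you propose a monotone-chord-slope argument, while the paper proves the identity $\hull(\epi(f_B)) = \epi(L_B f_B)$ via Carath\'eodory and Krein--Milman, which sidesteps any discussion of slopes or one-sided derivatives. Your route can be made to work, but your parenthetical claim that the slope on a gap equals a one-sided derivative of $\psi$ at a point of $\clos_i$ is not quite right (it is a chord slope, sandwiched between the relevant one-sided derivatives, and one-sided derivatives need not even exist at isolated points of $\clos_i$); the correct statement is that the three-point chord inequality for points of $\clos_i$ propagates to all triples in $[0,1]$, which requires a short case analysis over how the three points sit relative to the gaps. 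The paper's epigraph argument buys a cleaner treatment of exactly this point, at the cost of invoking two classical convexity theorems.
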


\subsubsection{Convergence of the finite-dimensional MAP function}

For a sequence of sets $(B_m)_{m \geq m_0}$, with $B_m \subseteq [0,1]$ for $m \geq m_0$, 
we say that $B_m$ is dense in $[0,1]$ as $m \to \infty$ if for every $x \in [0,1]$, 
we have $d(x,B_m) \to 0$ as $m \to \infty$. 
We first recall the convergence theorem (Theorem 3) in \cite{bay2017new} that shows that, when for $i=1,\ldots,d$ the knots in $S_i^{(m)}$ are dense in $[0,1]$, then the function $\widehat{Y}_{\subdiv^{(m)}}$ converges as $m \to \infty$ to a limit function $Y_{\text{opt}}$.

\begin{theorem}[Kimeldorf-Wahba correspondence under constraints, \cite{bay2017new}] \label{theorem:convergence:spline}
Consider a sequence of nested subdivisions $(\subdiv^{(m)})_{m \geq m_0}$ with $\subdiv^{(m)} \in \subdivset$ for $m \geq m_0$ (i.e. $S_i^{(m)} \subseteq S_i^{(m+1)}$ for $i \in \{1 , \ldots d\}$), such that for $i \in \{1 , \ldots d\}$, $S_i^{(m)}$ is dense in $[0, 1]$. Assume that Condition \ref{cond:C:convex:closed} holds.
Assume that the two following conditions hold.
\begin{itemize}
	\item[($H_1$)] $\mathrm{int}_{||.||_{\Hilb}}(\Hilb \, \cap \, \ineqset) \, \cap \, \interpset_{X,y^{(n)}}$ is non-empty,
	\item[($H_2$)] $\forall m \geq m_0, \, \pi_{S^{(m)}} (\ineqset) \subseteq \ineqset $.
\end{itemize}
Then, as $m \to \infty$, the function $\widehat{Y}_{\subdiv^{(m)}}$ converges uniformly on $\domain$
to $Y_{\text{opt}}$, with:
$$ 
Y_{\text{opt}} = \underset{f \, \in \, 
	\Hilb \, \cap \, \ineqset \, \cap \, \interpset_{X,y^{(n)}}}{\mathrm{argmin}}
||f||_{\Hilb}.
$$
\end{theorem}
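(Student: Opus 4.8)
The plan is to recast both the finite-dimensional modes and the limit $Y_{\text{opt}}$ as minimizers of (squared) RKHS norms over nested feasible sets, and then to obtain convergence of the minimizers by a Hilbert-space compactness argument, upgrading at the end to uniform convergence. The cornerstone is a Kimeldorf--Wahba-type identity: for a subdivision $\subdiv$ with knots $(\multiknot)_{\multi \in \multiset}$ and knot values $\alpha$, the finite-dimensional quadratic form $\alpha^\top k(\subdiv,\subdiv)^{-1}\alpha$ equals $|| \kappa_\subdiv \alpha ||_{\Hilb}^2$, where $\kappa_\subdiv \alpha = \sum_{\multi,\multi'} (k(\subdiv,\subdiv)^{-1})_{\multi,\multi'}\,\alpha_{\multi'}\, k(\cdot, \multiknot)$ is the minimum-norm element of $\Hilb$ interpolating $\alpha$ at the knots. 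Thus $\widehat{\alpha}_{\subdiv^{(m)}}$ minimizes $|| \kappa_{\subdiv^{(m)}}\alpha ||_{\Hilb}^2$ over those $\alpha$ for which the piecewise-multilinear function $Y_{\subdiv^{(m)},\alpha}$ lies in $\interpset_{X,y^{(n)}} \cap \ineqset$.

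First I would establish a uniform bound on the optimal values $c_m := || \kappa_{\subdiv^{(m)}} \widehat{\alpha}_{\subdiv^{(m)}} ||_{\Hilb}^2$ and show $\limsup_m c_m \le || Y_{\text{opt}} ||_{\Hilb}^2$. To do this I would take a function $g$ in $\mathrm{int}_{|| \cdot ||_{\Hilb}}(\Hilb \cap \ineqset) \cap \interpset_{X,y^{(n)}}$, available from $(H_1)$, form its projection $\pi_{\subdiv^{(m)}}(g)$, and exhibit a feasible competitor. Feasibility in $\ineqset$ follows from $(H_2)$, which gives $\pi_{\subdiv^{(m)}}(g) \in \ineqset$; the delicate point is \emph{exact} interpolation, since $\pi_{\subdiv^{(m)}}(g)$ interpolates $g$ at the knots but only approximately at $X$. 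I would repair this using the interior property: a small RKHS perturbation of $g$, whose norm vanishes as $m\to\infty$ (by density of the knots and continuity of $g$), restores $Y(x^{(i)})=y_i$ while staying in $\ineqset$; projecting the perturbed function yields a competitor whose quadratic form converges to $|| g ||_{\Hilb}^2$, which can be taken arbitrarily close to $|| Y_{\text{opt}} ||_{\Hilb}^2$.

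Next, the bounded sequence $\kappa_{\subdiv^{(m)}}\widehat{\alpha}_{\subdiv^{(m)}}$ in $\Hilb$ has a weakly convergent subsequence with some limit $f^\star$, and I would identify $f^\star = Y_{\text{opt}}$ by showing that $f^\star$ is feasible and that $|| f^\star ||_{\Hilb} \le || Y_{\text{opt}} ||_{\Hilb}$. The interpolation constraints pass to the limit because point evaluations are continuous linear functionals on $\Hilb$; membership in $\ineqset$ uses Condition~\ref{cond:C:convex:closed} together with the fact that, as the mesh vanishes, both $\widehat{Y}_{\subdiv^{(m)}}$ and its kernel interpolant converge uniformly to $f^\star$, so that $f^\star \in \ineqset$ by closedness and convexity. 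The norm bound follows from weak lower semicontinuity of $|| \cdot ||_{\Hilb}$ combined with the $\limsup$ estimate. Since $Y_{\text{opt}}$ is the unique minimizer (the norm being strictly convex on the closed convex feasible set), $f^\star = Y_{\text{opt}}$; moreover $|| \kappa_{\subdiv^{(m)}}\widehat{\alpha}_{\subdiv^{(m)}} ||_{\Hilb} \to || Y_{\text{opt}} ||_{\Hilb}$, which with weak convergence gives strong convergence in $\Hilb$ along the subsequence, and then along the whole sequence by uniqueness of the limit.

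Finally, I would upgrade to uniform convergence of $\widehat{Y}_{\subdiv^{(m)}}$ itself. Strong convergence in $\Hilb$ yields uniform convergence of the kernel interpolants $\kappa_{\subdiv^{(m)}}\widehat{\alpha}_{\subdiv^{(m)}}$, since continuity of $k$ bounds the sup norm by the RKHS norm; a separate estimate then shows that the piecewise-multilinear mode $\widehat{Y}_{\subdiv^{(m)}}$ and its kernel interpolant differ by $o(1)$ in sup norm as the mesh tends to zero, using equicontinuity of the interpolants on the dense knot set. The main obstacle is precisely this interplay between the two representations of the mode: the objective is naturally the RKHS norm of the \emph{kernel} interpolant, whereas the constraints act on the \emph{piecewise-multilinear} function $Y_{\subdiv^{(m)},\alpha}$, so the whole argument hinges on controlling the sup-norm gap between $Y_{\subdiv^{(m)},\alpha}$ and $\kappa_{\subdiv^{(m)}}\alpha$ uniformly over the relevant $\alpha$ as the knots become dense; this is also the step where the nestedness and density hypotheses are essential.
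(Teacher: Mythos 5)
The paper does not prove this theorem itself — it is imported verbatim from \cite{bay2017new} — and your outline follows essentially the same strategy as that reference's proof (whose key step, repairing exact interpolation at $X$ by a vanishing RKHS perturbation so that $\pi_{\subdiv^{(m)}}(g)$ becomes feasible, is precisely the content of Lemma~1 of \cite{bay2017new}, adapted in this paper as Lemma~\ref{lem:adapted:from:bay:et:al} for the non-dense extension). Your argument — Kimeldorf--Wahba identification of the quadratic form with $\Vert \kappa_{\subdiv^{(m)}}\alpha\Vert_{\Hilb}^2$, the $\limsup$ bound via a competitor built from an interior point and the segment toward $Y_{\text{opt}}$, weak compactness plus lower semicontinuity to identify the limit, and the sup-norm control of the gap between the spline mode and its kernel interpolant via the uniform modulus of continuity on the dense knot set — is correct and consistent with that proof.
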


In fact, in the above  theorem, ($H_1$) and  ($H_2$) hold in our framework.
Indeed, ($H_1$) is a consequence of Conditions \ref{cond:non:empty} and \ref{cond:interior:non:empty} and ($H_2$) is a consequence of Condition \ref{cond:piSC:subset:C}.

The interpretation is that $\widehat{Y}_{\subdiv^{(m)}}$ is a finite-dimensional GP mode, obtained by solving an optimization problem in finite dimension (see Section \ref{sec:finite:dimensional:constrained:GP}), while $Y_{\text{opt}}$ is the optimal constrained interpolator in $\Hilb$, obtained by solving an optimization problem in infinite dimension, see \cite{bay2017new}.

We now give an extension to Theorem \ref{theorem:convergence:spline} to the case where, for $i \in \{ 1 , \ldots , d\}$, the knots in $S_i^{(m)}$ are not necessarily dense in $[0, 1]$. This extension is the main result of this section. 
We define $\clos = \clos_1 \times \dots \times \clos_d$, 
where $\clos_i$ is the closure in $[0, 1]$ of all the knots at coordinate $i$:
\begin{equation*}
\clos_i = [0,1] \cap \overline{\bigcup_{m \geq m_0} \subdiv^{(m)}_i}.
\end{equation*}

We further denote by $\ineqset_\clos$ and $\interpset_{\clos,X,y^{(n)}}$
the pre-image sets of $\ineqset$ and $\interpset_{X, y^{(n)}}$ by the multiaffine extension $\extfun$
introduced in Section \ref{susubsection:multiaffine:extension}:
\begin{eqnarray*}
\ineqset_\clos &=& \{f : \clos \to \R, \text{ s.t. } \extfun f \in \ineqset \},  \\
\interpset_{\clos,X,y^{(n)}} &=& \{ f : \clos \to \R, \text{ s.t. } \extfun f \in \interpset_{X, y^{(n)}} \}.
\end{eqnarray*}
We also let $k_\clos$ be the restriction of the kernel $k$ on $\clos \times \clos$ 
and $\Hilb_\clos$ be the corresponding RKHS of functions from $\clos \to \R$ (see e.g. \cite{berlinet2011reproducing}).
We have $\Hilb_\clos = \{f : \clos \to \R, \, \exists h \in \Hilb \text{ s.t. } h_{\vert \clos} = f\}$,
and the RKHS norm in $\Hilb_\clos$ is $\Vert f \Vert_{\Hilb_\clos} = \underset{h_{\vert \clos} = f}{\text{inf}} \Vert h \Vert_\Hilb$.\\

Then we state the extension of Theorem~\ref{theorem:convergence:spline} to non-dense sequences.
The result is intuitive: on the closure set $\clos$,
the finite-dimensional GP mode converges uniformly to the constrained interpolator in $\Hilb_\clos$, with the equality and inequality constraints given by $\interpset_{\clos,X, y^{(n)}}$ and $\ineqset_{\clos}$. On the complement of $\clos$, these two functions are piecewise multilinear (the interpolator in $\Hilb_\clos$ being extended with the multiaffine extension), see Proposition \ref{prop:multiaffine:extension}.
The multiaffine extension enables to express the convergence, both on $\clos$ and its complement, simply.

\begin{theorem} \label{theorem:extension:convergence:spline}
Consider a sequence of nested subdivisions $\subdiv^{(m)}$, as well as $\clos$, $\Hilb_\clos$, $\ineqset_\clos$, $\interpset_{\clos,X,y^{(n)}}$, as defined in this section. 
Assume that Conditions \ref{cond:C:convex:closed} to \ref{cond:PfC:subset:C} hold.
Then, as $m \to \infty$, the function $\widehat{Y}_{\subdiv^{(m)}}$ converges uniformly on $\domain$
to $\extfun \left( Y_{\clos,\text{opt}} \right)$, with:
$$ 
Y_{\clos,\text{opt}} = \underset{f \, \in \, 
	\Hilb_\clos \, \cap \, \ineqset_\clos \, \cap \, \interpset_{\clos,X,y^{(n)}}}{\mathrm{argmin}}
||f||_{\Hilb_\clos}.
$$
\end{theorem}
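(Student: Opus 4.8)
The strategy is to reduce the non-dense case to the dense case of Theorem~\ref{theorem:convergence:spline} by transporting everything onto the closure set $\clos$. The key observation is that, because the subdivisions are nested and each $\clos_i$ is the closure of $\bigcup_m \subdiv^{(m)}_i$, the knots at coordinate $i$ become dense in $\clos_i$ (not in $[0,1]$). I would therefore set up a \emph{restricted} problem on $\clos$, where the roles of $[0,1]^d$, $k$, $\ineqset$, $\interpset_{X,y^{(n)}}$ are played by $\clos$, $k_\clos$, $\ineqset_\clos$, $\interpset_{\clos,X,y^{(n)}}$, and then invoke Theorem~\ref{theorem:convergence:spline} in this restricted setting to obtain uniform convergence on $\clos$ to $Y_{\clos,\text{opt}}$. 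Finally I would use the $1$-Lipschitz continuity of $\extfun$ from Proposition~\ref{prop:extension:properties}(1) to push the uniform convergence from $\clos$ to all of $\domain$.

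The first concrete step is to show that the finite-dimensional mode $\widehat{Y}_{\subdiv^{(m)}}$, restricted to $\clos$, coincides with the mode of an analogous restricted finite-dimensional problem living in $\Hilb_\clos$. The bridge here is equation \eqref{eq:pi:S:f:equal:multiaffine:extension} and Proposition~\ref{prop:extension:properties}(2): since $\subdiv^{(m)} \cap \domain \subseteq \clos$, any element of $E_{\subdiv^{(m)}}$ equals the multiaffine extension of its own restriction to $\clos$. Thus $\widehat{Y}_{\subdiv^{(m)}} = \extfun\big( \widehat{Y}_{\subdiv^{(m)}} {}_{|\clos} \big)$, and the objective $\alpha^\top k(\subdiv,\subdiv)^{-1}\alpha$, the interpolation constraint, and the inequality constraint can all be rephrased intrinsically in terms of the restricted function and the restricted kernel $k_\clos$. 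In particular, the definitions $\ineqset_\clos = \{f : \extfun f \in \ineqset\}$ and $\interpset_{\clos,X,y^{(n)}} = \{f : \extfun f \in \interpset_{X,y^{(n)}}\}$ are tailored precisely so that $\widehat{Y}_{\subdiv^{(m)}} \in \ineqset \cap \interpset_{X,y^{(n)}}$ is equivalent to its restriction lying in $\ineqset_\clos \cap \interpset_{\clos,X,y^{(n)}}$. This identifies the restricted mode with the finite-dimensional GP mode of $\xi_\clos$ on the knots $\subdiv^{(m)}$, now dense in $\clos$.

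The second step is to verify that the hypotheses $(H_1)$ and $(H_2)$ of Theorem~\ref{theorem:convergence:spline} hold for the restricted problem. For $(H_2)$, I would check that $\pi_{\subdiv^{(m)}}(\ineqset_\clos) \subseteq \ineqset_\clos$; this should follow from Condition~\ref{cond:piSC:subset:C} combined with Condition~\ref{cond:PfC:subset:C} and the stability of the multiaffine extension under composition with projection (Remark~\ref{remark:multi:affine:extension}). For $(H_1)$, I need $\mathrm{int}_{||.||_{\Hilb_\clos}}(\Hilb_\clos \cap \ineqset_\clos) \cap \interpset_{\clos,X,y^{(n)}}$ nonempty; here I would start from a function in $\strictineqset$ interpolating the data (Condition~\ref{cond:non:empty}) and from the RKHS richness supplied by Condition~\ref{cond:interior:non:empty}, restrict it to $\clos$, and argue that restriction maps the $\Hilb$-interior of $\Hilb \cap \ineqset$ into the $\Hilb_\clos$-interior of $\Hilb_\clos \cap \ineqset_\clos$. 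Establishing this interior-preservation under restriction, together with the correct identification of $\Hilb_\clos$ via $\Vert f \Vert_{\Hilb_\clos} = \inf_{h_{|\clos}=f} \Vert h \Vert_\Hilb$, is the most delicate part: one must confirm that the $\Hilb_\clos$-optimal interpolator $Y_{\clos,\text{opt}}$ defined in the statement genuinely coincides with the limit produced by Theorem~\ref{theorem:convergence:spline} applied on $\clos$.

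Once uniform convergence of $\widehat{Y}_{\subdiv^{(m)}}{}_{|\clos} \to Y_{\clos,\text{opt}}$ on $\clos$ is secured, the conclusion is immediate: applying $\extfun$ and invoking Proposition~\ref{prop:extension:properties}(1), $\extfun\big(\widehat{Y}_{\subdiv^{(m)}}{}_{|\clos}\big) = \widehat{Y}_{\subdiv^{(m)}}$ converges uniformly on $\domain$ to $\extfun(Y_{\clos,\text{opt}})$. \textbf{The main obstacle} I anticipate is the interplay between the two RKHS structures: one has to carefully justify that minimizing $\Vert \cdot \Vert_{\Hilb_\clos}$ over the restricted constraint sets yields exactly the restriction of the right infinite-dimensional object, and that the interior condition transfers from $\Hilb$ to $\Hilb_\clos$ under restriction. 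The multiaffine extension is the tool that makes the pre-image sets $\ineqset_\clos$ and $\interpset_{\clos,X,y^{(n)}}$ the correct ones, but verifying that Conditions~\ref{cond:non:empty}--\ref{cond:PfC:subset:C} translate cleanly into the restricted hypotheses $(H_1)$, $(H_2)$ will require the bulk of the technical work.
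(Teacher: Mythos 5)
Your overall strategy is exactly the paper's: restrict everything to $\clos$, identify $\widehat{Y}_{\subdiv^{(m)}\vert\clos}$ with the mode of the restricted problem via Proposition~\ref{prop:extension:properties}(2), verify the analogues $(H_1,\clos)$ and $(H_2,\clos)$, and transfer uniform convergence back to $\domain$ through the $1$-Lipschitz affine map $\extfun$. The verification of $(H_2,\clos)$ from Conditions~\ref{cond:piSC:subset:C} and \ref{cond:PfC:subset:C}, and of $(H_1,\clos)$ by combining Conditions~\ref{cond:non:empty} and \ref{cond:interior:non:empty} with the characterization $\Vert f\Vert_{\Hilb_\clos}=\inf_{h_{\vert\clos}=f}\Vert h\Vert_\Hilb$ and an interior-transfer argument, is also what the paper does.

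There is, however, one genuine gap in the step where you propose to ``invoke Theorem~\ref{theorem:convergence:spline} in the restricted setting.'' The restricted problem is \emph{not} an instance of that theorem, because the observation points $x^{(i)}$ need not belong to $\clos$: the constraint set $\interpset_{\clos,X,y^{(n)}}$ is not of the form $\{f: f(x^{(i)})=y_i\}$ for points of the new domain, but rather $\{f:\sum_j \lambda_{i,j} f(a_j)=y_i\}$ for finitely many points $a_j\in\clos$ and weights coming from \eqref{eq:multiExtFormula}. Consequently the proof of Theorem~\ref{theorem:convergence:spline} must be re-run, not cited, and its key approximation lemma (Lemma~1 of \cite{bay2017new}) must be re-established for these linear-functional constraints. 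That is where Condition~\ref{cond:spans:Rn} enters --- it guarantees the invertibility of the limiting Gram matrix $R$ of the functionals $f\mapsto\sum_j\lambda_{i,j}f(a_j)$, which is what makes the projections onto the discretized interpolation sets $F_m$ well defined and convergent (Lemma~\ref{lem:adapted:from:bay:et:al} in the paper). Your proposal never uses Condition~\ref{cond:spans:Rn}, which is a sign this step is missing; one also needs the auxiliary fact that $\pi_{\subdiv^{(m)}}(f)\to\extfun(f)$ uniformly for continuous $f$ on $\clos$ (Lemma~\ref{lem:pi:m:to:ext}), which is a nontrivial consequence of the density of the knots in $\clos$ and does not follow from the black-box statement of Theorem~\ref{theorem:convergence:spline}.
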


Note that the two functions $Y_{\text{opt}}$ in Theorem \ref{theorem:convergence:spline}
and $Y_{\clos,\text{opt}}$ in Theorem \ref{theorem:extension:convergence:spline} are equal when $\clos = \domain$.
When $\clos \neq \domain$, these two functions need not coincide, even on $\clos$.

\subsection{Convergence of the MaxMod algorithm} \label{subsection:convergence:maxmod}

We can now apply Theorem \ref{theorem:extension:convergence:spline} to prove the convergence of MaxMod in Theorem \ref{theorem:convergence} below. 
The technical conditions for Theorem \ref{theorem:convergence} are adaptations of those of 
Theorem \ref{theorem:extension:convergence:spline} to the setting of MaxMod. They are stated and discussed in Appendix \ref{appendix:section:adaptation:condition}. There, we also show that these conditions hold in the cases of boundedness, monotonicity and componentwise convexity.

Theorem \ref{theorem:convergence} shows the consistency of MaxMod, which will asymptotically select all the variables and allocate a dense sequence of knots to each variable. As a consequence, the mode function obtained from MaxMod converges to the infinite-dimensional optimal function $Y_{\text{opt}}$ defined in Theorem \ref{theorem:convergence:spline}. In Theorem \ref{theorem:convergence}, we let $\Hilb_{D}$ be the RKHS of $k_D$.

\begin{theorem} \label{theorem:convergence}
Let $(\activeset_m)$ and $(\subdiv^{(m)})$ be the sequence of sets of active variables and of subdivisions obtained from Algorithm~\ref{alg:iterative}.
Assume that $a_m \to 0$ and that Conditions \ref{cond:C:convex:closed} and \ref{cond:non:empty:D}
to
\ref{cond:PfC:subset:C:D} (see Appendix \ref{appendix:section:adaptation:condition}) hold.
Then, for $m$ large enough, $\activeset_m = \{1 , \ldots , D\}$. Furthermore, for $j=1,\ldots,D$, the set $\subdiv^{(m)}_j$ (which becomes well-defined for $m$ large enough) is dense in $[0,1]$ as $m \to \infty$. Consequently, as $m \to \infty$, the mode $\widehat{Y}_{\subdiv^{(m)}}$ converges uniformly on $\domainD$ to the function $Y_{\text{opt}}$ defined by
$$ 
Y_{\text{opt}} = \underset{f \, \in \, 
	\Hilb_D \, \cap \, \ineqset_D \, \cap \, \interpset_{X_D,y^{(n)}}}{\mathrm{argmin}}
||f||_{\Hilb_D}.
$$
\end{theorem}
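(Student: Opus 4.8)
The plan is to exploit the monotone, bounded nature of the active sets together with the convergence Theorem~\ref{theorem:extension:convergence:spline}, and to derive a contradiction from the rewards whenever either a variable fails to be activated or a coordinate fails to receive a dense set of knots. First I would note that $(\activeset_m)$ is non-decreasing and contained in the finite set $\{1,\ldots,D\}$, so it stabilizes: there exist $M$ and $\activeset_\infty \subseteq \{1,\ldots,D\}$ with $\activeset_m = \activeset_\infty$ for all $m \geq M$. For $m \geq M$ every step of Algorithm~\ref{alg:iterative} is a knot insertion, so $(\subdiv^{(m)})_{m \geq M}$ is nested in $\subdivset_{\activeset_\infty}$. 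Applying Theorem~\ref{theorem:extension:convergence:spline} to this fixed active set (its hypotheses follow from the MaxMod-adapted conditions of Appendix~\ref{appendix:section:adaptation:condition}), the modes $\widehat{Y}_{\subdiv^{(m)}}$ converge uniformly, hence in $L^2$, so the increments $\|\widehat{Y}_{\subdiv^{(m+1)}} - \widehat{Y}_{\subdiv^{(m)}}\|_{L^2}^2$ tend to $0$. Since $\subdiv^{(m+1)} = \subdiv^{(m)} \addknot{i^\star_{m+1}} t^\star_{m+1}$ and $N_{\subdiv^{(m)},\activeset_\infty,i} \geq 1$, this shows that the selected increment $I_{\activeset_\infty,\subdiv^{(m)}}(i^\star_{m+1},t^\star_{m+1}) \to 0$.

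Next I would prove that for every $i \in \activeset_\infty$ the knots $\bigcup_m \subdiv^{(m)}_i$ are dense in $[0,1]$. Suppose not: some coordinate has a persistent gap, i.e. an interval $(a,b) \subseteq [0,1] \setminus \clos_i$ of length $\delta := b-a > 0$ with endpoints in $\clos_i$. Then at every step $m \geq M$ the candidate knot at the midpoint $c = (a+b)/2$ lies at distance $\geq \delta/2$ from $\subdiv^{(m)}_i$, so by \eqref{eq:criterion:reward} its reward satisfies $R_{\activeset_\infty,\subdiv^{(m)}}(i,c) \geq \Delta\delta/2$. Feeding $(i,c)$ into the near-optimality inequality \eqref{eq:max:mod} and using $I \geq 0$ gives $I_{\activeset_\infty,\subdiv^{(m)}}(i^\star_{m+1},t^\star_{m+1}) + R_{\activeset_\infty,\subdiv^{(m)}}(i^\star_{m+1},t^\star_{m+1}) + a_m \geq \Delta\delta/2$. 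Since the first term and $a_m$ vanish, the reward of the chosen insertion is eventually $\geq \Delta\delta/4$, that is $d(t^\star_{m+1}, \subdiv^{(m)}_{i^\star_{m+1}}) \geq \delta/4$ for all large $m$. This is impossible: for a fixed coordinate one can insert only finitely many knots each at distance $\geq \delta/4$ from the previously present ones (at most $O(1/\delta)$ points of $[0,1]$ are pairwise $\delta/4$-separated), and summing over the finitely many coordinates of $\activeset_\infty$ allows only finitely many such insertions, whereas all of the infinitely many steps $m \geq M$ would be of this type. Hence no persistent gap exists and $\clos_i = [0,1]$ for each $i \in \activeset_\infty$; in particular the maximal gap of $\subdiv^{(m)}_i$ tends to $0$ for each active $i$, so the reward of the chosen action also vanishes, $R_{\activeset_\infty,\subdiv^{(m)}}(i^\star_{m+1},t^\star_{m+1}) \to 0$.

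It then remains to rule out $\activeset_\infty \neq \{1,\ldots,D\}$. If some $j \notin \activeset_\infty$ existed, then for every $m \geq M$ the candidate of adding $j$ has reward exactly $\Delta'$ by \eqref{eq:criterion:reward}; inserting it into \eqref{eq:max:mod} and using $I \geq 0$ yields $I_{\activeset_\infty,\subdiv^{(m)}}(i^\star_{m+1},t^\star_{m+1}) + R_{\activeset_\infty,\subdiv^{(m)}}(i^\star_{m+1},t^\star_{m+1}) + a_m \geq \Delta'$, whose left-hand side tends to $0$ by the previous paragraph, contradicting $\Delta' > 0$. Therefore $\activeset_\infty = \{1,\ldots,D\}$, which is the first assertion, and the density proved above gives the second. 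Finally, with $\activeset_m = \{1,\ldots,D\}$ and every coordinate dense we have $\clos = \domainD$, so the multiaffine extension $\extfun$ is the identity, $\Hilb_\clos = \Hilb_D$, $\ineqset_\clos = \ineqset_D$ and $\interpset_{\clos,X,y^{(n)}} = \interpset_{X_D,y^{(n)}}$; Theorem~\ref{theorem:extension:convergence:spline} (equivalently Theorem~\ref{theorem:convergence:spline} with $d = D$) then yields the uniform convergence of $\widehat{Y}_{\subdiv^{(m)}}$ on $\domainD$ to $Y_{\text{opt}}$.

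I expect the density step to be the main obstacle, because it requires controlling the two competing parts of the MaxMod criterion simultaneously. The vanishing of the selected $L^2$ increment comes from convergence of the modes, but this alone does not prevent knots from clustering; the geometric reward $\Delta\, d(t,\subdiv_i)$ is what forces exploration, and the delicate point is to convert a hypothetical non-dense coordinate into a uniform-in-$m$ lower bound on an \emph{available} reward and then to contradict it by a packing argument in the compact interval $[0,1]$. Care is also needed to check that the hypotheses of Theorem~\ref{theorem:extension:convergence:spline} transfer to the stabilized active set, and to ensure the separation sequence $(b_m)$ never obstructs the gap-filling candidate, which is precisely the role of \eqref{eq:nonCoverageCondition}.
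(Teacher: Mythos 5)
Your proposal is correct and follows essentially the same route as the paper's proof: stabilization of the nested active sets, application of Theorem~\ref{theorem:extension:convergence:spline} to force the selected $L^2$ increment to vanish, a contradiction via the reward term to obtain density of the knots on each active coordinate, and a second contradiction via $\Delta'$ to force activation of all variables. The only cosmetic difference is in closing the density step, where you use a packing bound on $\delta/4$-separated points in $[0,1]$ while the paper extracts a convergent subsequence of the inserted knots; these are interchangeable, and your explicit attention to the admissibility of the midpoint candidate under the separation distances $b_m$ matches the role of \eqref{eq:nonCoverageCondition} in the paper.
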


Theorem \ref{theorem:convergence} can be interpreted as stating that, for a fixed dataset, as the computational budget (quantified here by the number of multi-dimensional knots) goes to infinity, the finite-dimensional mode converges to $Y_{\text{opt}}$, which is optimal for the dataset, but requires, so to speak, an infinite computational budget. Remark that MaxMod is sequential and of the greedy type. Hence,
as discussed in Section \ref{section:intro}, it is important to guarantee that its one-step-ahead allocation of the knots does not prevent it to yield mode functions that are converging to the global optimum $Y_{\text{opt}}$.

\begin{remark}
From the proof of Theorem \ref{theorem:convergence}, one can see that the convergence of MaxMod still holds if the $L^2$ distance
in \eqref{eq:criterion:maxmod} is replaced by any discrepancy criterion $\Delta(
f_1,
f_2)$, that goes to zero when $|| f_1 - f_2 ||_{\infty} \to 0$. 
\end{remark}

%%%%%%%%%%%%%%%%%%%%%%%%%%%%%%%%%%%%%%%%%%%%%%%%%%%%%%%%%%%%%%
%%%%%%%%%%%%%%%%%%%%%%%%%%%%%%%%%%%%%%%%%%%%%%%%%%%%%%%%%%%%%%
%%%%%%%%%%%%%%%%%%%%%%%%%%%%%%%%%%%%%%%%%%%%%%%%%%%%%%%%%%%%%%
%%%%%%%%%%%%%%%%%%%%%%%%%%%%%%%%%%%%%%%%%%%%%%%%%%%%%%%%%%%%%%
%DEBUT PARTIE ANDRES
%%%%%%%%%%%%%%%%%%%%%%%%%%%%%%%%%%%%%%%%%%%%%%%%%%%%%%%%%%%%%%
%%%%%%%%%%%%%%%%%%%%%%%%%%%%%%%%%%%%%%%%%%%%%%%%%%%%%%%%%%%%%%
%%%%%%%%%%%%%%%%%%%%%%%%%%%%%%%%%%%%%%%%%%%%%%%%%%%%%%%%%%%%%%
%%%%%%%%%%%%%%%%%%%%%%%%%%%%%%%%%%%%%%%%%%%%%%%%%%%%%%%%%%%%%%
\section{Numerical experiments} \label{section:numerical:experiments}

In this section, we aim at testing the performance of the constrained GP when the knots and active dimensions are sequentially added using MaxMod.  In practice, as shown in \cite{LopezLopera2019lineqGPNoise}, incorporating noise in the constrained GP model leads to significant computational improvements due to the ``relaxation'' of the interpolation conditions. The noise, parametrized by a variance $\tau^2 \geq 0$, leads to a new definition of the mode in \eqref{eq:the:mode}, that can be found in \cite{LopezLopera2019lineqGPNoise,maatouk:hal-01533356}, and that we call the noisy mode. This noisy mode can be computed even when the number of multi-dimensional knots $m = m_{\subdiv_{a_1}} \times \dots \times m_{\subdiv_{a_d}}$ is smaller than the number of observations $n$. MaxMod can be carried out exactly as in Section \ref{section:MaxMod} when working with the noisy mode. We will denote as $\widehat{Y}_{\text{MaxMod}}$ the noisy mode obtained from MaxMod.
% We initialize the noise variance parameter $\tau^2$ to the empirical variance of the observations.

Here, we shall work with the noisy mode, which allows us to always initialize MaxMod with only one active dimension, i.e. $|\activeset| = 1$, and to add new active ones according to Algorithm \ref{alg:iterative}. We take the first active dimension as the one resulting in an initial mode $\widehat{Y}_{\text{MaxMod},0}$ that differs the most from zero. Then, for each addition of a new dimension, an initial set of two knots is allocated at the boundaries of $[0,1]$.
\begin{algorithm}[t!]
\caption{Practical implementation of MaxMod using the noisy mode}\label{alg:iterative2}
\begin{algorithmic}[1]
	\BStatex  \textbf{Input parameters:} $\Delta>0$, $\Delta' >0$, $D$.
	\BState Initialize the algorithm by selecting as initial input dimension the variable that gives the largest norm of the initial noisy mode $\widehat{Y}_{\text{MaxMod},0}$.
	\BStatex \textbf{Sequential procedure:} For $m \in \mathbb{N}$,  $m \geq 0$, do the following.
	\For{$k = 1, \ldots, D$} 
	\If {the variable $k$ is already active} compute the optimal position of the new knot $t_k \in [0,1]$ according to the MaxMod criterion described in \eqref{eq:max:mod} (in the supremum), and denote the resulting noisy mode as $\widehat{Y}_{\text{MaxMod},m+1}^{(k)}$.
	\Else~{add two knots at the boundaries of the selected new active dimension, i.e. $(t_{k,1}, t_{k,2}) = (0,1)$, and denote the resulting noisy mode as $\widehat{Y}_{\text{MaxMod},m+1}^{(k)}$.}
	\EndIf
	\EndFor
	\BState Choose the optimal decision $k^{\ast} \in \{1, \ldots, D\}$ that maximizes the MaxMod criterion:
	\begin{equation*}
		k^\ast \in \underset{k \in \{1, \ldots, D\}}{\mathrm{argmax}} \mathrm{MaxMod}_{\Delta, \Delta'}(\widehat{Y}_{\text{MaxMod}, m}, \widehat{Y}_{\text{MaxMod},m+1}^{(k)}). 
	\end{equation*}
	\BState Update knots and active variables, set new noisy mode to $\widehat{Y}_{\text{MaxMod},m+1} = \widehat{Y}_{\text{MaxMod},m+1}^{(k^\ast)}$. 
\end{algorithmic}
\end{algorithm}

Algorithm \ref{alg:iterative2} summarizes the practical implementation of MaxMod that is used within this section.
We fix $\Delta = \Delta' = 1\times 10^{-9}$, see \eqref{eq:criterion:reward}, which means that there is a negligible reward for either inserting a new knot in an already active dimension or adding a new dimension. For other applications, one may be particularly interested in adding knots while preserving a tractable input dimension, in which case $\Delta$ must be larger than $\Delta'$.
Furthermore, we consider a squared exponential kernel for the covariance function $k_{\activeset}$ \cite{Genton2001Kernels,Rasmussen2005GP}. For each step of MaxMod, the covariance parameters of $k_{\activeset}$ and the noise variance parameter $\tau^2$ are estimated via maximum likelihood \cite{Rasmussen2005GP}.
The open source code of MaxMod is available in the R package lineqGPR (0.2.0) \cite{lineqgpr}.\footnote{The source code can be found at: \url{https://github.com/anfelopera/lineqGPR}. The analytic examples proposed here (Sections \ref{section:numerical:experiments:subsec:2Dtoy} and \ref{section:numerical:experiments:subsec:dimension:reduction}) can be reproduced using the Jupyter notebooks available in the GitHub repository (\url{https://github.com/anfelopera/lineqGPR/tree/master/notebooks}).
The numerical experiments were executed on an 11th Gen Intel(R) Core(TM) i5-1145G7 CPU@2.60GHz, 16 Gb RAM.
Computations from step 2 to 4 of Algorithm \ref{alg:iterative2} were parallelized using two independent cores.}

\subsection{2D illustration under monotonicity constraints}
\label{section:numerical:experiments:subsec:2Dtoy}
For illustration, we consider the 2D monotonic function $f(x) = \frac{1}{2} x_1 + \arctan(10x_2)$ on $[0,1]^2$. We evaluate $f$ at a maximin Latin hypercube design (LHD, \cite{Dupuy2015DiceDesign}) with $n = 40$ points $x_D^{(1)},\ldots,x_D^{(n)}$. The observations, with the notation of Section \ref{section:MaxMod}, are thus $y_1 = f(x_D^{(1)}),\ldots,y_n=f(x_D^{(n)})$.
For $\widehat{Y}_{\text{MaxMod}}$, we account for monotonicity constraints everywhere. 
As a stopping rule, we check that the criterion in \eqref{eq:criterion:maxmod}, plus the reward in \eqref{eq:criterion:reward}, is smaller than a tolerance fixed to $1\times 10^{-5}$, for all possible new knot or variable.

\subsubsection{Assessment of MaxMod in terms of bending energy}
From Figure \ref{fig:toy2D}, we can observe that MaxMod starts by adding the second dimension rather than the first one since $f$ is more variable across $x_2$. Note also that, before activating the first dimension, the algorithm refines the second one by placing a third knot around $x_2 = 0.29$. Then, after the second iteration, although the first dimension has been activated, the algorithm prefers enriching the quality of $\widehat{Y}_{\text{MaxMod}}$ across $x_2$ while staying linear across $x_1$. The convergence of the algorithm is obtained after four iterations, resulting in a total of $m = 10$ knots: 2 and 5 one-dimensional knots allocated across the first and the second dimension, respectively. We note that the final estimated noise variance parameter is negligible, $\widehat{\tau}_{\text{MaxMod}}^2 = 8.38 \times 10^{-5}$ (equivalent to $\approx 0.07\%$ of the variance of the observations), resulting in a GP model that almost interpolates all the observations (see Figure \ref{fig:toy2D}).
\begin{figure}[t!]
\centering
\begin{minipage}{0.35\columnwidth}
	\centering
	\vskip1ex
	\includegraphics[width=\columnwidth]{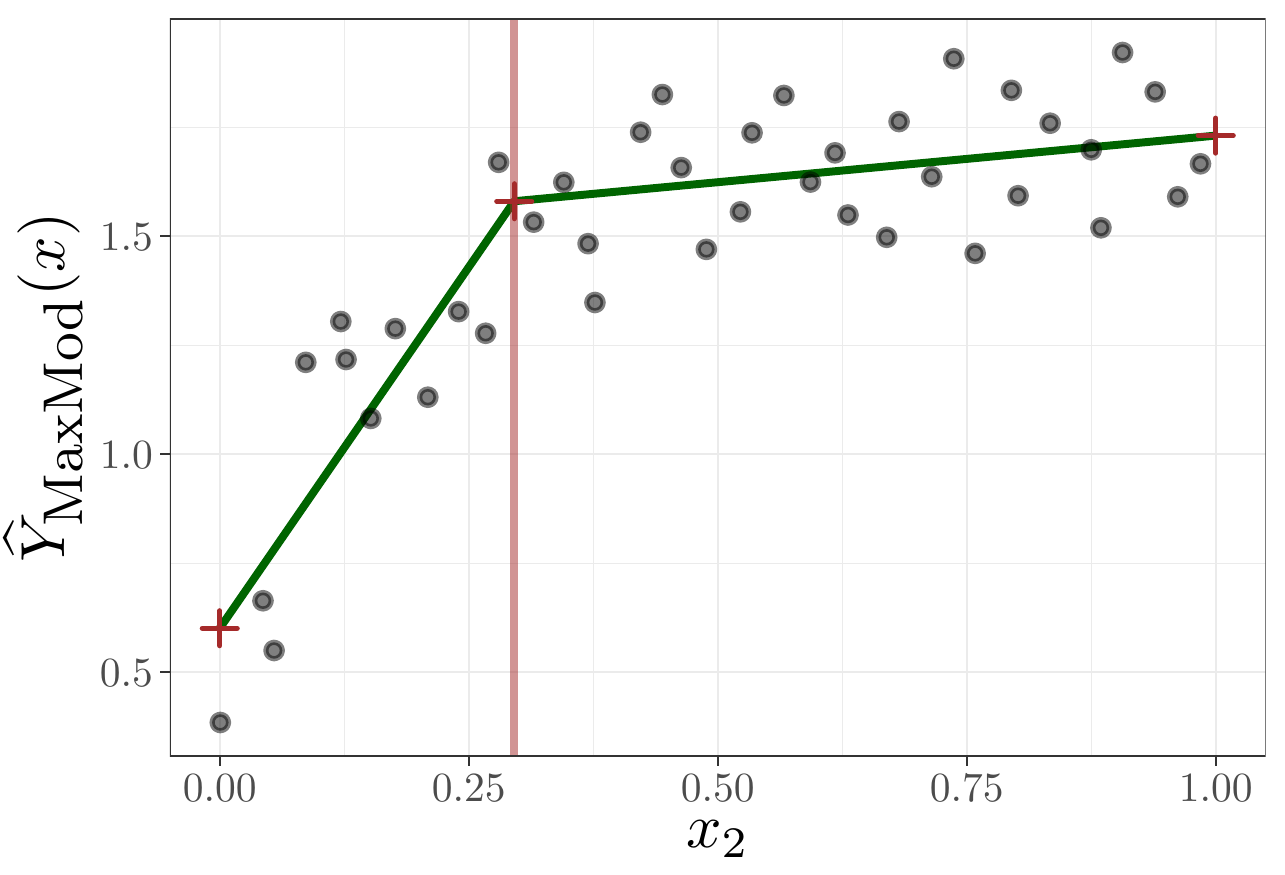}
	%		\\ \vskip2ex
	%		\includegraphics[width=\columnwidth]{}
\end{minipage}
\foreach \i\j in {4/3,6/5} {
	\begin{minipage}{0.27\columnwidth}
		\centering
		\includegraphics[width=\columnwidth]{demToyLineqGPMaxMode2Exp1SeqGrid2DMAPfig\i}
	\end{minipage}		
}
\begin{minipage}{0.35\columnwidth}
	\centering
	\hskip5ex iteration 1
\end{minipage}	
\foreach \i\j in {4/2,6/4} {
	\begin{minipage}{0.27\columnwidth}	
		\centering
		iteration \j
	\end{minipage}
}
\caption{Evolution of MaxMod using $f(x) = \frac{1}{2} x_1 + \arctan(10x_2)$ as target function. The mode $\widehat{Y}_{\text{MaxMod}}$ accounts for monotonicity constraints everywhere. The panels shows: the observations (black dots), the mode (1D: green solid line, 2D: solid surface) and the knots (red crosses). The set of added knots are highlighted by a vertical red line in 1D and a vertical red plane in 2D.}
\label{fig:toy2D}
\end{figure}

We now compare the mode $\widehat{Y}_{\text{MaxMod}}$ to modes resulting from equispaced designs of the knots (see \cite{LopezLopera2019lineqGPNoise,LopezLopera2017FiniteGPlinear,maatouk2017gaussian}). We consider either square or rectangular designs of the knots. This leads to three modes denoted as:
\begin{itemize}
\item $\widehat{Y}_{\text{MaxMod}}$: the mode resulting from MaxMod.
\item $\widehat{Y}_{\text{MaxMod, rect}}$: the mode resulting from equispaced one-dimensional knots but where the number of knots per dimension is the same as for $\widehat{Y}_{\text{MaxMod}}$.
\item $\widehat{Y}_{\text{square}}$: the mode resulting from equispaced one-dimensional knots with the same number of knots per dimension. 
\end{itemize}
We assess the quality of $\widehat{Y}_{\text{square}}, \widehat{Y}_{\text{MaxMod,rect}}$ and $\widehat{Y}_{\text{MaxMod}}$ in terms of the (normalized) bending energy \cite{DeBoor2001GuideSplines}: $E_n(f, \widehat{Y}) = \int_{[0,1]^{D}} (f(x) - \widehat{Y}(x) )^2dx/\int_{[0,1]^{D}} f^2(x) dx$. By comparing the $E_n$ values using $\widehat{Y}_{\text{MaxMod}}$ to the ones provided by $\widehat{Y}_{\text{square}}$ or $\widehat{Y}_{\text{MaxMod,rect}}$, we aim at showing that MaxMod not only adds active dimensions strategically but also places knots in regions leading to smaller errors.

Figure \ref{fig:toy2Derrors} shows the performance of $\widehat{Y}_{\text{square}}, \widehat{Y}_{\text{MaxMod,rect}}$ and $\widehat{Y}_{\text{MaxMod}}$ in terms of $E_n$. Observe that the mode $\widehat{Y}_{\text{MaxMod}}$ minimizes faster the $E_n$ criterion, leading to negligible values after adding $m = 8$ knots (iteration 4, see also Figure \ref{fig:toy2D}). For equispaced designs of knots, the $E_n$ values from $\widehat{Y}_{\text{MaxMod,rect}}$ outperformed those from $\widehat{Y}_{\text{square}}$. However, both $\widehat{Y}_{\text{square}}$ and $\widehat{Y}_{\text{MaxMod,rect}}$ led to suboptimal results due to the equispaced restriction.
\begin{figure}[t!]
\centering
\includegraphics[width=0.85\columnwidth]{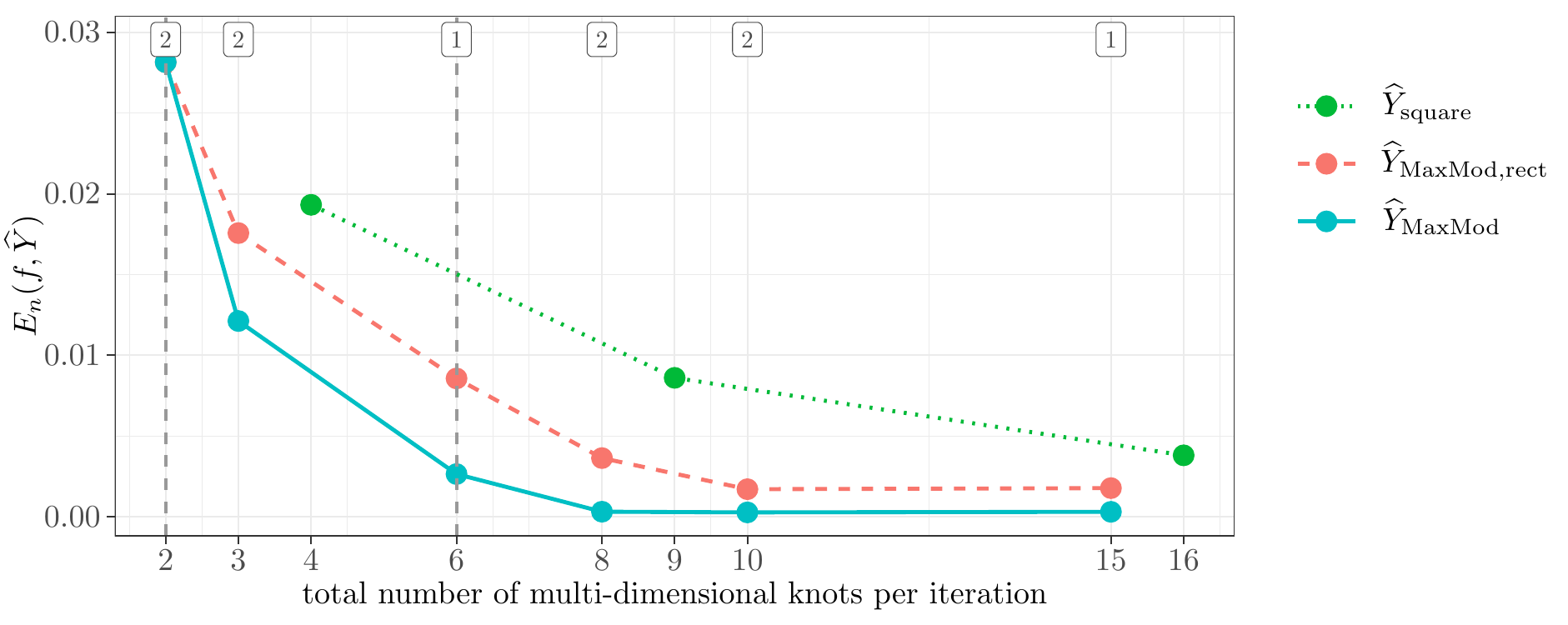}
\caption{Evolution of the (normalized) bending energy $E_n$ for the example in Figure \ref{fig:toy2D}. Results are shown for $\widehat{Y}_{\text{square}}$ (green dotted line), $\widehat{Y}_{\text{MaxMod,rect}}$ (red dashed line) and $\widehat{Y}_{\text{MaxMod}}$ (blue solid line). For $\widehat{Y}_{\text{MaxMod,rect}}$ and $\widehat{Y}_{\text{MaxMod}}$, the labels on top are given for each iteration. They denote which dimension has been refined by MaxMod and vertical dashed lines indicate when a new active dimension is added.}
\label{fig:toy2Derrors}
\end{figure}

\subsubsection{Computation time}
We report the computation time taken by MaxMod and compare it with equispaced knots evenly allocated to each variable (mode $\widehat{Y}_{\text{square}}$). We consider the execution of MaxMod up to $m=6$ and the equispaced approach with $m=16$.
Indeed,  from Figure \ref{fig:toy2Derrors}, both models lead to similar $E_n$ values. MaxMod led to a total lapse time of about 1.4s, while the equispaced approach led to 0.13s.
The computation time of MaxMod is mostly due to the update of the covariance parameters (maximum likelihood estimation) each time a new knot or active variable is tried.

Thus, the execution time of MaxMod is larger than for the equispaced approach, for the same accuracy. However, the benefit of MaxMod is that it reduces the number of knots. This is crucial for the subsequent exploitation of the Gaussian process model. 
More precisely, the cost of numerical sampling of constrained conditional simulations of $f$, that enables to compute confidence intervals \cite{LopezLopera2019lineqGPNoise,LopezLopera2017FiniteGPlinear,maatouk2017gaussian}, increases with the number of knots. Here (in two dimensions) this cost is mild: obtaining 1000 constrained GP simulations required 0.08s with the knots provided by MaxMod, compared to 0.14s for the equispaced approach.\footnote{In this paper, we considered the Hamiltonian Monte Carlo sampler proposed in \cite{Pakman2014Hamiltonian} for simulating constrained GP realizations.} However, the benefit of having less knots with MaxMod is key in higher dimension, as we illustrate below in dimension 5 (see Table \ref{tab:CPUtimesBRGMApp}). Finally, the equispaced approach becomes intractable when the dimension $D$ is larger than approximately $10$, as the available numerical routines are unable to compute a mode with more than about a thousand ($2^{10}$) knots.
This approach may be already inaccurate in the range $6-10$ for $D$, as 
the common number of knots for the variables cannot exceed $3$.  
In contrast, we will implement MaxMod up to dimension $20$.

\subsubsection{Impact of the sample size}
In the context of expensive black box objective functions, we are interested in having accurate predictions with small sample sizes (number of observations of $f$). 
We assess the quality of the three modes for varying sample sizes $n = 10, \ldots, 50$, considering again maximin LHDs.
We use the optimal configurations of knots obtained in the experiment in Figure \ref{fig:toy2Derrors}, that is $m = 15$ ($m_1 = 3, m_2 = 5$) for $\widehat{Y}_{\text{MaxMod,rect}}$ and $\widehat{Y}_{\text{MaxMod}}$ and $m=16$ ($m_1 = m_2 = 4$) for $\widehat{Y}_{\text{square}}$. 
Figure \ref{fig:toy2DerrorsvsSampleSize} shows that for small sample sizes, the three modes lead to small $E_n$ values, and that both $\widehat{Y}_{\text{MaxMod,rect}}$ and $\widehat{Y}_{\text{MaxMod}}$ outperform $\widehat{Y}_{\text{square}}$.
For this latter mode, we can see a local increase of the error from sample size $20$ to $30$, which may be explained by the randomness of the maximin LHDs. 
Hence, again, the prediction accuracy improves when considering the optimal design of knots of MaxMod.
\begin{figure}[t!]
\centering
%		\medskip
%	
\includegraphics[width=0.85\columnwidth]{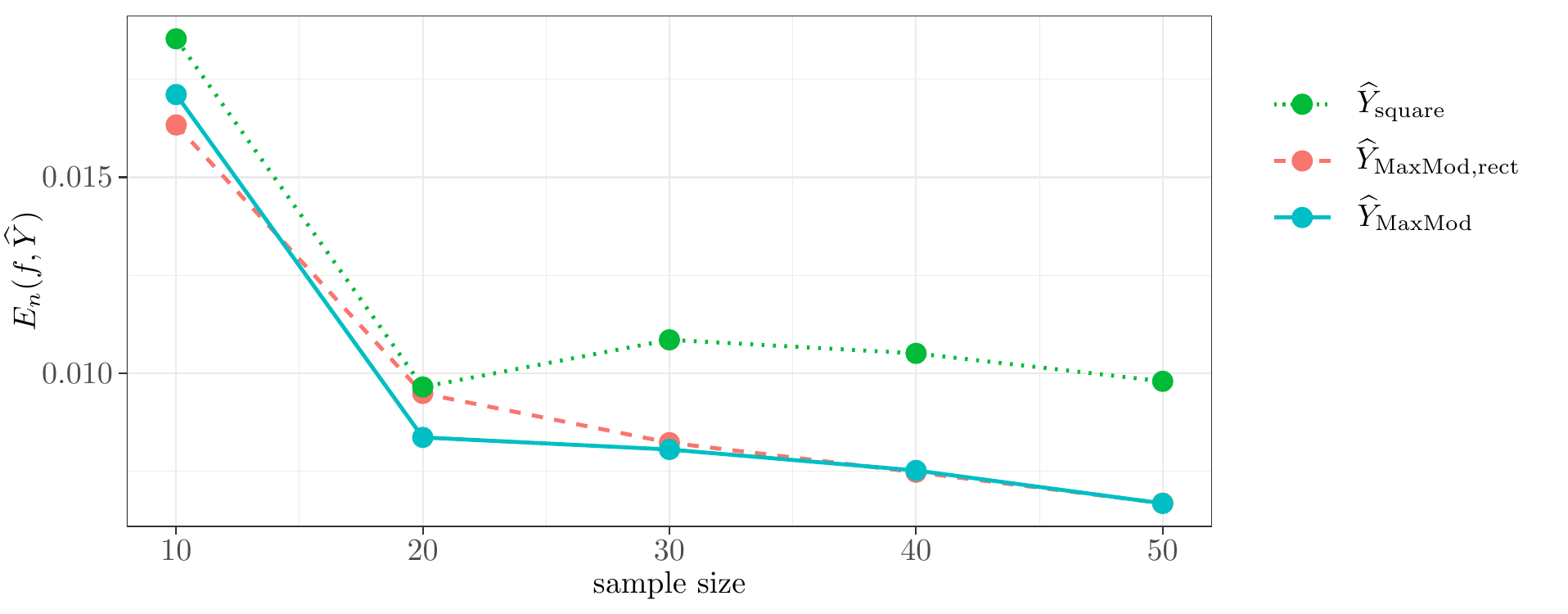}
\caption{Evolution of the (normalized) bending energy $E_n$ for the example in Figure \ref{fig:toy2D} with respect to the sample size $n = 10, \ldots, 50$. The $E_n$ criterion is computed over a $10 \times 10$ equispaced grid.}
\label{fig:toy2DerrorsvsSampleSize}
\end{figure}

\subsection{Dimension reduction illustration}
\label{section:numerical:experiments:subsec:dimension:reduction}
We now focus on the capability of MaxMod to perform dimension reduction. 
We apply the same stopping rule as in the previous test case, with a tolerance equal to $5\times 10^{-3}$.
We consider the target function: 
\begin{equation}
f(x) = \sum_{i=1}^{d} \arctan\left(5\bigg[1-\frac{i}{d+1}\bigg] x_i\right),
\label{eq:modatanFun}
\end{equation}
with $x \in [0,1]^d$. Note that $f$ is completely monotone exhibiting lesser growth rates as $i$ increases. In addition to $(x_1, \ldots, x_d)$, we include $D - d$ virtual variables, indexed as $(x_{d+1}, \ldots, x_{D})$, which will compose the subset of inactive dimensions since $f$ does not depend on them. We consider $D \in \{5,10,15,20\}$ and $d \in \{2,3,4,5\}$. We also analyze the case where $d=D$. For each value of $D$, we evaluate $f$ at a maximin LHD with $n = 10\times D$ points. For any possible combination of $D$ and $d$, we apply MaxMod expecting at properly finding the true $d$ active dimensions when $d < D$. When $d = D$, we expect that MaxMod will concentrate the computational budget on the most important input variables $x_1, \ldots, x_{d^\ast}$ with $d^\ast < d$.

From Table \ref{tab:ActInactivePerformances}, we observe that when $d<D$ MaxMod properly identifies the $d$ dimensions that are actually active, leading to small $E_n$ results when considering either $\widehat{Y}_{\text{MaxMod,rect}}$ or $\widehat{Y}_{\text{MaxMod}}$.\footnote{In all the replicates in Table \ref{tab:ActInactivePerformances}, MaxMod leads to small noise variance parameters $\hat{\tau}_{\text{MaxMod}}^2$ when $d < D$. The largest value, obtained for $D = 20$ and $d = 5$, is $\hat{\tau}_{\text{MaxMod}}^2 = 1.75 \times 10^{-3}$ ($\approx 0.4\%$ of the variance of the observations).} When $d = D$, MaxMod successfully detects the most important variables $ 1 , \ldots , d^\ast$ with $d^\ast < D$ for $D \geq 10$.
When the active dimension $d$ is larger or equal to $10$, the function approximation problem is intrinsically more difficult. Thus MaxMod stops before being able
to reach the stopping criterion, since the number of knots becomes too large for the numerical routines that compute the mode ($m > 1400$). In these cases, we fix the maximal number of iterations to $12$, which nevertheless yields small values of $E_n$.

Overall, $\widehat{Y}_{\text{MaxMod}}$ outperformed $\widehat{Y}_{\text{MaxMod,rect}}$ due to its flexibility to freely allocate knots without being limited to equispaced designs.
Finally, we remark that the mode $\widehat{Y}_{\text{square}}$ is intractable here due to the large dimension $D$.

\begin{table}
\caption{Performance of MaxMod for the example in Section \ref{section:numerical:experiments:subsec:dimension:reduction}. Results are shown for both $\widehat{Y}_{\text{MaxMod,rect}}$ and $\widehat{Y}_{\text{MaxMod}}$ considering $D \in \{5,10,15,20\}$ and $d \in \{2,3,4,5\}$. We also consider the case where $d=D$. The activated dimensions, number of one-dimensional knots per active dimension and $E_n$ results are displayed for any combination of $D$ and $d$.
	$^\dagger$The maximal number of iterations is fixed to $12$ for the experiments where $d \geq 10$. }
\label{tab:ActInactivePerformances}
\centering
\begin{tabular}{|c|c|c|c|c|c|}
	\hline
	$D$ & $d$ & active dim. & knots per dim. &  $E_n(f,\widehat{Y}_{\text{MaxMod,rect}})$ & $E_n(f, \widehat{Y}_{\text{MaxMod}})$ \\
	\hline		
	\multirow{4}{*}{5} & 2 & (1, 2) & (7, 4) & $\boldsymbol{2.59\times 10^{-5}}$ & $4.51\times 10^{-5}$  \\
	& 3 & $(1 ,2, 3)$ & $(6, 6, 4)$ & ${6.42\times 10^{-4}}$ & $\boldsymbol{4.09\times 10^{-4}}$ \\
	& 4 & $(1, \ldots, 4)$ & $(4, 4, 3, 2)$ & $\boldsymbol{9.02\times 10^{-4}}$ & ${9.05\times 10^{-4}}$ \\
	& 5 & $(1, \ldots, 5)$ & $(3, 4, 4, 3, 2)$ & $\boldsymbol{1.15\times 10^{-3}}$ & ${1.19\times 10^{-3}}$\\
	\hline
	\multirow{4}{*}{10} & 2 & $(1, 2)$ & $(5, 3)$ & $\boldsymbol{1.03\times 10^{-5}}$ & ${2.78\times 10^{-5}}$ \\ 
	& 3 & $(1, 2, 3)$ & $(5, 4, 3)$ & ${2.13\times 10^{-3}}$ & $\boldsymbol{1.79\times 10^{-3}}$ \\
	& 4 & $(1, \ldots, 4)$ & $(5, 3, 3, 2)$ & ${3.10\times 10^{-4}}$ & $\boldsymbol{2.89\times 10^{-4}}$ \\
	& 5 & $(1, \ldots, 5)$ & $(3, 4, 3, 3, 2)$ & ${7.40\times 10^{-4}}$ & $\boldsymbol{4.31\times 10^{-4}}$ \\
	& 10$^\dagger$ & $(1, \ldots, 7)$ & $(3, 3, 3, 2, 3, 3, 2)$ & ${4.44\times 10^{-3}}$ & $\boldsymbol{3.94\times 10^{-3}}$ \\
	\hline
	\multirow{4}{*}{15} & 2 & (1, 2) & (4, 3) & $\boldsymbol{1.73\times 10^{-4}}$ & ${1.85\times 10^{-4}}$ \\ 
	& 3 & $(1, 2, 3)$ & $(4, 3, 3)$ & ${2.13\times 10^{-4}}$ & $\boldsymbol{1.94\times 10^{-4}}$\\
	& 4 & $(1, \ldots, 4)$ & $(3, 3, 3, 2)$ & ${4.06\times 10^{-4}}$ & $\boldsymbol{1.94\times 10^{-4}}$ \\
	& 5 & $(1, \ldots, 5)$ & $(3, \ldots, 3, 2)$ & ${2.26\times 10^{-4}}$ & $\boldsymbol{9.29\times 10^{-5}}$ \\
	& 15$^\dagger$ & $(1, \ldots, 6)$ & $(3, \ldots, 3)$ & ${6.22 \times 10^{-3}}$ & $\boldsymbol{5.93 \times 10^{-3}}$ \\			
	\hline
	\multirow{4}{*}{20} & 2 & (1, 2) & (5, 3) & ${9.88\times 10^{-5}}$ & $\boldsymbol{9.37\times 10^{-5}}$\\ 
	& 3 & $(1, 2, 3)$ & $(4, 4, 3)$ & $\boldsymbol{1.40\times 10^{-4}}$ & $\boldsymbol{1.40\times 10^{-4}}$\\
	& 4 & $(1, \ldots, 4)$ & $(4, 3, 3, 3)$ & ${3.48\times 10^{-4}}$ & $\boldsymbol{1.97\times 10^{-4}}$\\
	& 5 & $(1, \ldots, 5)$ & $(3, \ldots, 3, 2)$ & ${5.60\times 10^{-4}}$ & $\boldsymbol{2.83\times 10^{-4}}$ \\
	& 20$^\dagger$ & $(1, \ldots, 7)$ & $(2, 3, \ldots, 3, 2)$ & ${5.58\times 10^{-3}}$ & $\boldsymbol{5.55\times 10^{-3}}$ \\
	\hline
\end{tabular}
\end{table}

\subsection{Coastal flooding application in 5D}
\label{section:numerical:experiments:subsec:BRGM}

We consider the 5D coastal flood application studied in \cite{Azzimonti2018CoastalFlooding,LopezLopera2019lineqGPNoise}, available in the R package \texttt{profExtrema} \cite{Azzimonti2018profExtrema}. In the past, a flood event at the \textit{Boucholeurs} area (\textit{La Rochelle}, France) was induced by an overflow on the Atlantic ocean caused by the Xynthia storm in 2010. To prevent adverse coastal flood events, such as the one led by the Xynthia storm, accurate forecast and early-warning systems (see, e.g., \cite{Azzimonti2018CoastalFlooding,LopezLopera2019lineqGPNoise,Rohmer2018RandomForest} for GP-based ones) are required.

%Such event was characterized by a high storm surge in phase with a high spring tide (see \cite{Azzimonti2018profExtrema,Azzimonti2018CoastalFlooding} for further details). 

The dataset contains 200 observations of the flooded area ($A_{\text{flood}} [m^2]$) driven by five offshore forcing conditions (inputs) at the \textit{Boucholeurs} area: tide ($T [m]$), surge ($S [m]$), the phase difference ($\phi$, hours) between the surge peak and the high tide, the time duration of the raising part ($t_{-}$, hours) and the falling part ($t_{+}$, hours) of the (triangular) surge signal. In particular, it is known that $A_{\text{flood}}$ increases as $T$ and $S$ increase. According to \cite{Azzimonti2018CoastalFlooding,LopezLopera2019lineqGPNoise}, while the contribution of $T$, $S$, $t_{-}$ and $t_{+}$ are almost linear, $A_{\text{flood}}$ exhibits a higher variation across $\phi$. Thus, in our experiments, we may expect that MaxMod properly concentrate one-dimensional knots across $\phi$ rather than across the other dimensions. As suggested in \cite{LopezLopera2019lineqGPNoise}, we consider here $Y := \log_{10}(A_{\text{flood}})$ as the output variable.

As shown in \cite{LopezLopera2019lineqGPNoise}, enforcing a GP-based coastal emulator to both positivity and monotonicity (with respect to $T$ and $S$) constraints leads to a more reliable prediction. There, the number of one-dimensional knots per dimension has been manually fixed looking for a trade-off between the computational cost and the quality of resolution of the constrained GP. Here, we aim at applying MaxMod and comparing the $E_n$ results to those from the knots in \cite{LopezLopera2019lineqGPNoise}. As in Sections \ref{section:numerical:experiments:subsec:2Dtoy} and \ref{section:numerical:experiments:subsec:dimension:reduction}, we compute the modes $\widehat{Y}_{\text{square}}$, $\widehat{Y}_{\text{MaxMod,rect}}$ and $Y_{\text{MaxMod}}$. In addition, we compute the mode resulting from the knots in \cite{LopezLopera2019lineqGPNoise}, denoted as $\widehat{Y}_{\ast}$. Since the target function $Y$ is actually unknown, the bending energy $E_n$ is computed over the available 200 observations: $E_n(Y, \widehat{Y}) = \sum_{i = 1}^{200} (Y_{i} - \widehat{Y}_i)^2/\sum_{i=1}^{200} Y_{i}^2$. Figure \ref{fig:toyBRGMerrors} shows that MaxMod results in a total of $m = 4\times3\times6\times3\times2 = 432$ knots, leading to $E_n(Y, \widehat{Y}_{\text{MaxMod,rect}}) = 9.17 \times10^{-3}$ and $E_n(Y, \widehat{Y}_{\text{MaxMod}}) = 8.81 \times10^{-3}$. These results are comparable to those from $\widehat{Y}_{\ast}$, $E_n(Y, \widehat{Y}_{\ast}) = 8.72 \times10^{-3}$ with $m = 4\times4\times5\times3\times3 = 720$, and from $\widehat{Y}_{\text{square}}$, $E_n(Y, \widehat{Y}_{\text{square}}) = 8.72 \times10^{-3}$ with $m = 4^5 = 1024$. Moreover, as expected, MaxMod concentrated the computational budget on the input $\phi$ ($6$ one-dimensional knots) rather than the other ones ($4,3,3,2$) since $Y$ varies the most across $\phi$. In Figure \ref{fig:toyBRGMerrors}, it appears that $\widehat{Y}_{\text{MaxMod}}$ is the most efficient, needing a smaller number of knots than $\widehat{Y}_{\text{square}}$, $\widehat{Y}_{\text{MaxMod,rect}}$ and $\widehat{Y}_{\ast}$ to reach a given value of $E_n$. In terms of noise variance, MaxMod estimates $\widehat{\tau}_{\text{MaxMod}}^2 = 6.16 \times 10^{-2}$, a small (but non negligible) value equivalent to $\approx 7.6\%$ of the variance of the observations.
Our interpretation for this small value is that it accounts for possible numerical instabilities of the computer code. Furthermore it improves the accuracy of the mode function and speeds up its computation by making the inequality constraints easier to satisfy (see \cite{LopezLopera2019lineqGPNoise} for further discussions).
% %corresponding to  compared to the variance of the coastal flooding data $\operatorname{var}(Y) = 8.04 \times 10^{-1}$.
%However, we can note that $Y$ is actually noisy, and therefore, selecting smaller values of $\tau_{\text{MaxMod}}^2$ may lead to over-fitting or to processes that cannot satisfy both interpolation and inequality conditions.
\begin{figure}[t!]
\centering
\includegraphics[width=1\columnwidth]{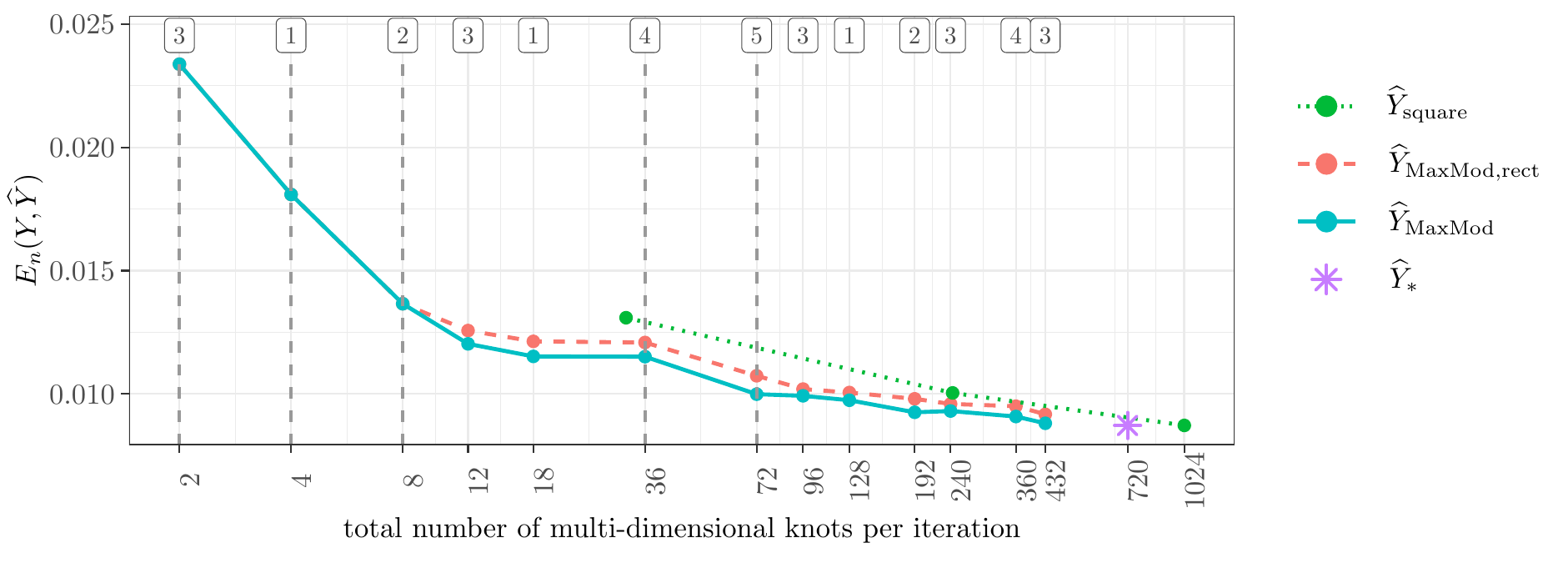}
\caption{Evolution of the bending energy $E_n$ criterion for the coastal flooding application in Section \ref{section:numerical:experiments:subsec:BRGM}. The panel description is the same as in Figure \ref{fig:toy2Derrors}. The $E_n$ value associated to the mode $\widehat{Y}_{\ast}$, yielded by using the configuration of the knots suggested in \cite{LopezLopera2019lineqGPNoise}, is displayed by a purple asterisks at $m = 720$.}
\label{fig:toyBRGMerrors}
\end{figure}

Table \ref{tab:CPUtimesBRGMApp} shows the CPU times required
for training the GP models from $\widehat{Y}_{\text{square}}$ (covariance parameter estimation) and $\widehat{Y}_{\text{MaxMod}}$ (Algorithm \ref{alg:iterative2} including covariance parameter estimation). It also shows the elapsed times related to the computation of the modes $\widehat{Y}_{\text{square}}$ and $\widehat{Y}_{\text{MaxMod}}$ (with given covariance parameters), and to the sampling of 100 conditional realizations of the resulting constrained processes.
The training time is smaller for $\widehat{Y}_{\text{square}}$.
The computation of $\widehat{Y}$ is faster for MaxMod, but for both MaxMod and the equispaced approach, the computation time for $\widehat{Y}$ is small compared to the training time. With the $432$ knots from MaxMod, the $100$ conditional realizations  are obtained in less than 2 minutes. 
In contrast, with the $1024$ knots from $\widehat{Y}_{\text{square}}$, the Hamiltonian Monte Carlo routine for sampling the conditional distribution proposed in \cite{Pakman2014Hamiltonian} does not converge, because of the overly high dimension. This highlights the benefit of MaxMod in real applications: it requires an acceptable increase of the (offline) training time in order to reduce the constrained GP model dimension, which enhances its subsequent exploitation (conditional sampling).
\begin{table}
\centering
\caption{CPU lapse times for the coastal flooding application in Section \ref{section:numerical:experiments:subsec:BRGM}.} 
\label{tab:CPUtimesBRGMApp}
\begin{tabular}{|c|c|c|c|c|c|}
	\hline
	& & \multirow{2}{*}{$E_n(Y, \widehat{Y})$} & \multicolumn{3}{c|}{CPU time [s]} \\ \cline{4-6}
	Approach & $m$ & \multirow{2}{*}{[$1\times 10^{-3}$]} & \multirow{2}{*}{Training step} & Computation & Sampling step \\
	& & & & of $\widehat{Y}$ & with 100 realizations \\
	\hline
	$\widehat{Y}_{\text{square}}$ & $1024$ &  8.72 & 49.1 & 8.03 & non converged after 1 day \\			
	$\widehat{Y}_{\text{MaxMod}}$ & $432$ & 8.81 & 949.5 & 0.58 & 108.72 \\
	\hline
\end{tabular}
\end{table}

%%%%%%%%%%%%%%%%%%%%%%%%%%%%%%%%%%%%%%%%%%%%%%%%%%%%%%%%%%%%%%
%%%%%%%%%%%%%%%%%%%%%%%%%%%%%%%%%%%%%%%%%%%%%%%%%%%%%%%%%%%%%%
%%%%%%%%%%%%%%%%%%%%%%%%%%%%%%%%%%%%%%%%%%%%%%%%%%%%%%%%%%%%%%
%%%%%%%%%%%%%%%%%%%%%%%%%%%%%%%%%%%%%%%%%%%%%%%%%%%%%%%%%%%%%%
%FIN PARTIE ANDRES
%%%%%%%%%%%%%%%%%%%%%%%%%%%%%%%%%%%%%%%%%%%%%%%%%%%%%%%%%%%%%%
%%%%%%%%%%%%%%%%%%%%%%%%%%%%%%%%%%%%%%%%%%%%%%%%%%%%%%%%%%%%%%
%%%%%%%%%%%%%%%%%%%%%%%%%%%%%%%%%%%%%%%%%%%%%%%%%%%%%%%%%%%%%%
%%%%%%%%%%%%%%%%%%%%%%%%%%%%%%%%%%%%%%%%%%%%%%%%%%%%%%%%%%%%%%

\section{Conclusion} \label{section:conclusion}

This paper introduces MaxMod, that sequentially inserts one-dimen\-sional knots or adds active variables to a constrained GP regression model. This algorithm results in the first constrained GP model that at the same time satisfies the constraints everywhere and that is not restricted to small dimensional cases in practice. 

A proof of convergence, for a fixed dataset and as the number of iterations goes to infinity, guarantees that, despite its sequential nature, MaxMod globally converges to an optimal infinite dimensional model. In establishing this convergence, the notion of a multi-affine extension is constructed. Furthermore, the convergence of finite-dimensional GP models is shown in settings where the multi-dimensional knots are not dense in the input domain, thereby extending the recent literature. This construction, and this latter convergence result, may be of special and independent interest, together with the corresponding proof techniques. 

In Section \ref{section:numerical:experiments}, with simulated and real data, it is confirmed numerically that MaxMod is tractable and accurate (at least in dimension $D=20$), and typically needs less multi-dimensional knots than the other state-of-the-art constrained GP models. 
We demonstrate the strong benefit of having fewer knots when subsequently computing confidence intervals from constrained GP models.	
These numerical examples also indicate that MaxMod successfully detects the active variables when the effective dimension $d$ is small. 
Even when $d = D$, MaxMod was able to detect the most important variables.

\section*{Acknowledgments}
We are grateful to N. Durrande for constructive discussions. This research was supported by the Chair in Applied Mathematics OQUAIDO\footnote{OQUAIDO gathers partners in technological research (BRGM, CEA, IFPEN, IRSN, Safran, Storengy) and academia (CNRS, Ecole Centrale de Lyon, Mines Saint-Etienne, Univ. of Grenoble, Univ. of Nice, Univ. of Toulouse) around advanced methods for Computer Experiments.}, the ANR\footnote{French National Research Agency, under the RISCOPE project (ANR-16-CE04-0011).}, and the Isaac Newton Institute for Mathematical Sciences, Cambridge.\footnote{UNQ program (EPSRC grantEP/K032208/1).} Part of this research was carried out when the second author was affiliated to IMT and BRGM. We are indebted to the referees, for their constructive suggestions that greatly improved the manuscript. 

\appendix

\section{Expression of the linear inequality constraints in Section \ref{subsubsection:obtaining:linear:inequality:constraints} for boundedness, monotonicity and componentwise convexity} \label{appendix:obtaining:linear:inequality:constraints}

Let us provide expressions of $M (\ineqset_{\activeset} )$ and of $v( \ineqset_{\activeset} )$ such that \eqref{eq:equivalence:constraint} holds when $\ineqset_D$ is given by \eqref{eq:boundedness:D}, \eqref{eq:monotonicity:D} and \eqref{eq:convexity:D}. When $\ineqset_D$ is given by \eqref{eq:boundedness:D}, we define $M (\ineqset_{\activeset} )$ and $v( \ineqset_{\activeset} )$ as follows. The matrix $M (\ineqset_{\activeset} )$ has $2 |\multiset|$ rows and thus $v( \ineqset_{\activeset} )$ is of size $2 |\multiset|$. The first $|\multiset|$ rows and values of $v( \ineqset_{\activeset} )$ are indexed by $\multi \in \multiset$ and defined by, for $\multi' \in \multiset$,
\begin{equation} \label{eq:equivalence:constraint:boundedness:one}
	M (\ineqset_{\activeset})_{\multi,\multi'}
	= - \mathbf{1}_{\multi = \multi'}
	~ ~
	\text{and}
	~ ~
	v( \ineqset_{\activeset} )_{\multi} = -a.
\end{equation}

The last $|\multiset|$ rows and values of $v( \ineqset_{\activeset} )$ are indexed by $\multi \in \multiset$ and defined by, for $\multi' \in \multiset$,
\begin{equation} \label{eq:equivalence:constraint:boundedness:two}
	M (\ineqset_{\activeset})_{ |\multiset| + \multi,\multi'}
	=  \mathbf{1}_{\multi = \multi'}
	~ ~
	\text{and}
	~ ~
	v( \ineqset_{\activeset} )_{|\multiset|+ \multi} = b,
\end{equation}

using the slight abuse of notation $|\multiset|+ \multi$ to denote the last $|\multiset|$ rows and values. The fact that \eqref{eq:equivalence:constraint} holds with \eqref{eq:equivalence:constraint:boundedness:one} and \eqref{eq:equivalence:constraint:boundedness:two} can be simply shown to follow from the proof of Lemma \ref{lem:cond:PfC:subset:C} and from \eqref{eq:pi:S:f:equal:multiaffine:extension}. Similar equations were also stated, in the previous references
\cite{LopezLopera2017FiniteGPlinear,maatouk2017gaussian}.

When $\ineqset$ is given by \eqref{eq:monotonicity:D}, we define $M (\ineqset_{\activeset} )$ and $v( \ineqset_{\activeset} )$ as follows. The matrix
$M (\ineqset_{\activeset} )$ is composed of $d = | \activeset |$ vertically stacked matrices $M^{(1)} (\ineqset_{\activeset} ),\ldots,M^{(d)} (\ineqset_{\activeset} )$ and $v( \ineqset_{\activeset} )$ is composed of $d$ vertically stacked column vectors $v^{(1)}( \ineqset_{\activeset} ), \ldots , v^{(d)}( \ineqset_{\activeset} )$.
For $i = 1 , \ldots , d$, the matrix $M^{(i)} (\ineqset_{\activeset} )$
has $ (m_{\subdiv_{a_i}}-1)  \prod_{j=1,\ldots,d,j \neq i}  m_{\subdiv_{a_j}}$ rows and $v^{(i)}( \ineqset_{\activeset} )$ has the same number of components. The rows and values of $v^{(i)}( \ineqset_{\activeset} )$ are indexed by the multi-indices $\multi = (\ell_{a_1},\ldots,\ell_{a_d}) 
\in
\big(  \prod_{j=1}^{i-1} \{ 1 , \ldots , m_{\subdiv_{a_j}}\} \big)
\times
\{ 2 , \ldots , m_{\subdiv_{a_i}}\} 
\times
\big(  \prod_{j=i+1}^d \{ 1 , \ldots , m_{\subdiv_{a_j}}\} \big) 
$ 
and defined by 
\begin{align} \label{eq:equivalence:constraint:monotonicity}
	& M^{(i)}(\ineqset_{\activeset})_{(\ell_{a_1},\ldots,\ell_{a_d}),(\ell'_{a_1},\ldots,\ell'_{a_d})}
	= - \mathbf{1}_{\ell_{a_j} = \ell'_{a_j}~ \text{for} ~ j \neq i , \ell_{a_i} = \ell'_{a_i}}
	+
	\mathbf{1}_{\ell_{a_j} = \ell'_{a_j}~ \text{for} ~ j \neq i , \ell_{a_i} - 1 = \ell'_{a_i}}
	\\
	&
	~ ~
	\text{and}
	~ ~
	v^{(i)}( \ineqset_{\activeset} )_{(\ell_{a_1},\ldots,\ell_{a_d})} = 0. \notag
\end{align}

Equation \eqref{eq:equivalence:constraint:monotonicity} means that the set of values on the tensorized grid of $d$-dimensional knots is component-wise non-decreasing. It can be shown from the proof of Lemma \ref{lem:cond:PfC:subset:C} and from \eqref{eq:pi:S:f:equal:multiaffine:extension} that $M (\ineqset_{\activeset} )$ and $v( \ineqset_{\activeset} )$ given by \eqref{eq:equivalence:constraint:monotonicity}
imply \eqref{eq:equivalence:constraint}. This was noticed in the previous references \cite{LopezLopera2017FiniteGPlinear,maatouk2017gaussian}.

Finally, when $\ineqset$ is given by \eqref{eq:convexity:D},
we define $M (\ineqset_{\activeset} )$ and $v( \ineqset_{\activeset} )$ as follows.
The matrix $M (\ineqset_{\activeset} )$ is composed of $d$ vertically stacked matrices $M^{(1)} (\ineqset_{\activeset} ),\ldots,M^{(d)} (\ineqset_{\activeset} )$ and $v( \ineqset_{\activeset} )$ is composed of $d$ vertically stacked column vectors $v^{(1)}( \ineqset_{\activeset} ), \ldots , v^{(d)}( \ineqset_{\activeset} )$.
For $i = 1 , \ldots , d$, the matrix $M^{(i)} (\ineqset_{\activeset} )$
has $ (m_{\subdiv_{a_i}}-2)  \prod_{j=1,\ldots,d,j \neq i}  m_{\subdiv_{a_j}}$ rows and $v^{(i)}( \ineqset_{\activeset} )$ has the same number of components. The rows and values of $v^{(i)}( \ineqset_{\activeset} )$ are indexed by the multi-indices $\multi = (\ell_{a_1},\ldots,\ell_{a_d}) \in
\big(  \prod_{j=1}^{i-1} \{ 1 , \ldots , m_{\subdiv_{a_j}}\} \big)
\times
\{ 3 , \ldots , m_{\subdiv_{a_i}}\} 
\times
\big(  \prod_{j=i+1}^d \{ 1 , \ldots , m_{\subdiv_{a_j}}\} \big) 
$  and defined by 
\begin{align} \label{eq:equivalence:constraint:convexity}
	& M^{(i)}(\ineqset_{\activeset})_{(\ell_{a_1},\ldots,\ell_{a_d}),(\ell'_{a_1},\ldots,\ell'_{a_d})}
	\\
	& = - \mathbf{1}_{\ell_{a_j} = \ell'_{a_j}~ \text{for} ~ j \neq i , \ell_{a_i} = \ell'_{a_i}}
	+
	2 \mathbf{1}_{\ell_{a_j} = \ell'_{a_j}~ \text{for} ~ j \neq i , \ell_{a_i} - 1 = \ell'_{a_i}}
	- \mathbf{1}_{\ell_{a_j} = \ell'_{a_j}~ \text{for} ~ j \neq i , \ell_{a_i}-2 = \ell'_{a_i}}
	\notag
	\\ 
	\text{and} ~ ~ & 
	v^{(i)}( \ineqset_{\activeset} )_{(\ell_{a_1},\ldots,\ell_{a_d})} = 0. \notag
\end{align}

Equation \eqref{eq:equivalence:constraint:convexity} means that the set of values on the tensorized grid of $d$-dimensional knots is component-wise convex. It follows from the proof of Lemma \ref{lem:cond:PfC:subset:C} and from \eqref{eq:pi:S:f:equal:multiaffine:extension} that $M (\ineqset_{\activeset} )$ and $v( \ineqset_{\activeset} )$ given by \eqref{eq:equivalence:constraint:convexity}
imply \eqref{eq:equivalence:constraint}. To the best of our knowledge, this was only shown in dimension one, in earlier references. 

\begin{remark}
	The equation \eqref{eq:equivalence:constraint:convexity} is defined only for $i \in \{ 1 , \ldots,d\}$ such that $m_{\subdiv_{a_i}} \geq 3$. If $i$ is such that $m_{\subdiv_{a_i}} = 2$, then the matrix $M^{(i)} (\ineqset_{\activeset} )$ and the vector $v^{(i)}( \ineqset_{\activeset} )$ can be removed from $M (\ineqset_{\activeset} )$ and $v( \ineqset_{\activeset} )$. Indeed, when $m_{\subdiv_{a_i}} = 2$, the function $ \pi_\subdiv ( \xi_\activeset) $ is linear (thus convex) along the dimension $a_i$.
\end{remark} 

\section{Computing the $L^2$ difference between modes in practice}
\label{subsection:computing:Ldeux:difference:modes}

At step $m$ of the MaxMod algorithm, the current mode function is $\widehat{Y}_{\activeset_{m} , \, \subdiv^{(m)}} : [0,1]^{d_{m}} \to \mathbb{R}$, with $d_{m} =|\activeset_{m}| $ and with
\begin{equation} \label{eq:hatY:step:m:minus:one}
	\widehat{Y}_{\activeset_{m} , \, \subdiv^{(m)}}
	=
	\sum_{\multi \in \mathcal{L}_{\subdiv^{(m)}}} 
	(\widehat{\alpha}_{\activeset_{m},\subdiv^{(m)}})_{\multi}
	\phi_{\multi}^{(\subdiv^{(m)})}.
\end{equation}

To simplify the exposition in Section \ref{subsection:computing:Ldeux:difference:modes}, we let $\activeset_{m} = \activeset$, and we consider the case where $\activeset = \{ 1 , \ldots, d\}$. The results of Section \ref{subsection:computing:Ldeux:difference:modes} can then be immediately extended to a general set $\activeset$, by permuting indices. We also let $\subdiv^{(m)} = \subdiv = (\subdiv_{1},\ldots,\subdiv_{d})$.

\subsection{$L^2$ difference for a knot insertion to an active variable} 
\label{subsubsection:knot:added:to:active:variable}

Consider the active variable $1$, and a new knot $t \in [0,1] \backslash \subdiv^{(m)}_{1}$ (for the variable $1$). Again, the choice of the active variable  $1$ enables to simplify the exposition and the results of Section \ref{subsubsection:knot:added:to:active:variable} can be immediately extended to a general active variable $i \in \{ 1 , \ldots , d\}$.

The candidate mode function is
\begin{equation} \label{eq:hatY:step:m}
	\widehat{Y}_{\activeset  , \, \subdiv \cup_{1} t}
	=
	\sum_{\multi \in \mathcal{L}_{\subdiv\cup_{1} t }} 
	(\widehat{\alpha}_{\activeset,\subdiv\cup_{1} t})_{\multi}
	\phi_{\multi}^{(\subdiv\cup_{1} t)}.
\end{equation}
For $j=1,\ldots,d$, write $m_{j} = m_{\subdiv_{j}}$. With this notation, $\subdiv_{j} = \{  t^{(\subdiv_{j})}_{(0)},\ldots, t^{(\subdiv_{j})}_{(m_{j}+1)} \}$ with the ordered knots $t^{(\subdiv_{j})}_{(0)} < \dots < t^{(\subdiv_{j})}_{(m_{j}+1)}$.

Let $\subdiv' = \subdiv \cup_{1} t = (\subdiv'_{1} , \ldots , \subdiv'_{d})$. We have $\subdiv'_{j} = \subdiv_{j}$ for $j \in \{ 2 , \ldots, d\}$. Let $\nu \in \{ 1 , \ldots , m_{1} - 1\}$ be such that $t^{\subdiv_{1}}_{( \nu )} < t < t^{\subdiv_{1}}_{( \nu + 1 )}$. Then we have $\subdiv'_{1} = \{  t^{(\subdiv'_{1})}_{(0)},\ldots, t^{(\subdiv'_{1})}_{(m_{1}+2)} \}$ with the ordered knots $t^{(\subdiv'_{1})}_{(0)} < \dots < t^{(\subdiv'_{1})}_{(m_{1}+2)}$ with
\[
(t^{(\subdiv'_{1})}_{(0)} , \ldots , t^{(\subdiv'_{1})}_{(m_{1}+2)})
=
(t^{(\subdiv_{1})}_{(0)} , \ldots ,  t^{(\subdiv_{1})}_{(\nu)}
,
t
,
t^{(\subdiv_{1})}_{(\nu+1)}
, \ldots , 
t^{(\subdiv_{1})}_{(m_{1}+1)}
).
\]

Then the next proposition provides a computationally efficient formula for the $L^2$ difference between modes. To understand Proposition \ref{prop:efficient:computation:new:knot}, note that the difference between the current mode function \eqref{eq:hatY:step:m:minus:one} and the new mode function \eqref{eq:hatY:step:m} can be expressed as a weighted sum of the basis functions of the new (refined) subdivision $\subdiv'$.
In Proposition \ref{prop:efficient:computation:new:knot}, \eqref{eq:expression:coeff:in:new:basis:new:knot:one}, \eqref{eq:expression:coeff:in:new:basis:new:knot:two} and \eqref{eq:expression:coeff:in:new:basis:new:knot:three} first provide the expressions of the corresponding coefficients. Then, it just suffices to compute the matrix of $L^2$ inner products between the $d$-dimensional basis functions of $\subdiv'$.
This matrix is given in \eqref{eq:matrix:Psi}. It is the tensor product of the $d$
matrices of $L^2$ inner products between the one-dimensional basis functions of the $d$ one-dimensional subdivisions $\subdiv'_1,\ldots,\subdiv'_d$. These latter matrices are provided in \eqref{eq:matrix:psi}.

\begin{proposition}[$L^2$ difference for knot insertion to an active variable] \label{prop:efficient:computation:new:knot}
	Define for $\multi = (\ell_{1},\ldots,\ell_{d}) \in \mathcal{L}_{\subdiv'}$, 
	\begin{equation} \label{eq:expression:coeff:in:new:basis:new:knot:one}
		\beta_{\multi} = 
		(\widehat{\alpha}_{\activeset,\subdiv})_{\multi}
		-
		(\widehat{\alpha}_{\activeset,\subdiv'})_{\multi}
	\end{equation}
	if $\ell_{1} \in \{1 , \ldots , \nu \}$,
	\begin{equation} \label{eq:expression:coeff:in:new:basis:new:knot:two}
		\beta_{\multi} = 
		(\widehat{\alpha}_{\activeset,\subdiv})_{(\nu,\ell_2\ldots,\ell_{d})}
		\frac{ t^{(\subdiv_{1})}_{(\nu+1)} - t }{t^{(\subdiv_{1})}_{(\nu+1)} - t^{(\subdiv_{1})}_{(\nu)}}
		+
		(\widehat{\alpha}_{\activeset,\subdiv})_{(\nu+1,\ell_2,\ldots,\ell_{d})}
		\frac{ t - t^{(\subdiv_{1})}_{(\nu)}  }{t^{(\subdiv_{1})}_{(\nu+1)} - t^{(\subdiv_{1})}_{(\nu)}}
		- (\widehat{\alpha}_{\activeset,\subdiv'})_{\multi}
		,
	\end{equation}
	if $\ell_{1} = \nu+1$ and
	\begin{equation} \label{eq:expression:coeff:in:new:basis:new:knot:three}
		\beta_{\multi} = 
		(\widehat{\alpha}_{\activeset,\subdiv})_{(\ell_1-1,\ell_2,\ldots,\ell_{d})}
		-
		(\widehat{\alpha}_{\activeset,\subdiv'})_{\multi}
	\end{equation}
	if $\ell_{1} \in \{ \nu +2 , \ldots , m_{1} +1 \} $.

	Define for $\multi = (\ell_{1},\ldots,\ell_{d}) \in \mathcal{L}_{\subdiv}$, $\multi' = (\ell'_{1},\ldots,\ell'_{d}) \in \mathcal{L}_{\subdiv'}$,
	\begin{equation} \label{eq:matrix:Psi}
		\Psi_{\multi,\multi'}
		=
		\begin{cases}
			\prod_{j=1,\ldots,d}
			\psi^{(j)}_{\ell_{j},\ell'_{j}}
			& ~ \text{if} ~  ~  ~
			| \ell_{j} - \ell'_{j} | \leq 1
			~ \text{for} ~ j=1,\ldots,d
			\\
			0
			& ~ \text{else} 
		\end{cases},
	\end{equation}
	with, when $| \ell_{j} - \ell'_{j} | \leq 1 $,
	\begin{equation} \label{eq:matrix:psi}
		\psi^{(j)}_{\ell_{j},\ell'_{j}}
		=
		\begin{cases}
			\frac{
				t^{(\subdiv')}_{(\ell_{j}+1)}
				-
				t^{(\subdiv')}_{(\ell_{j})}
			}{
				3
			}
			~ ~ \text{if} ~ ~
			\ell_{j} = \ell'_{j} = 1
			\\
			\frac{
				t^{(\subdiv')}_{(\ell_{j})}
				-
				t^{(\subdiv')}_{(\ell_{j}-1)}
			}{
				3
			}
			~ ~ \text{if} ~ ~
			\ell_{j} = \ell'_{j} = m_{j} +1
			\\
			\frac{
				t^{(\subdiv')}_{(\ell_{j}+1)}
				-
				t^{(\subdiv')}_{(\ell_{j}-1)}
			}{
				3
			}
			~ ~ \text{if} ~ ~
			\ell_{j} = \ell'_{j} \in \{2 , \ldots ,  m_{j} \}
			\\
			\frac{
				t^{(\subdiv')}_{(\ell_{j}+1)}
				-
				t^{(\subdiv')}_{(\ell_{j})}
			}{
				6
			}
			~ ~ \text{if} ~ ~
			\ell'_{j} = \ell_{j} +1 
			\\
			\frac{
				t^{(\subdiv')}_{(\ell_{j})}
				-
				t^{(\subdiv')}_{(\ell_{j}-1)}
			}{
				6
			}
			~ ~ \text{if} ~ ~
			\ell'_{j} = \ell_{j} -1 
			\\
		\end{cases}.
	\end{equation}
	Then, we have
	\begin{equation} \label{eq:quadratic:form:for:Ltwo:difference}
		\int_{[0,1]^d}
		\left(
		\widehat{Y}_{\activeset , \, \subdiv}(x)
		-
		\widehat{Y}_{\activeset  , \, \subdiv \cup_{1} t}(x)
		\right)^2
		dx
		=
		\sum_{\multi \in \mathcal{L}_{\subdiv'}, \multi' \in \mathcal{L}_{\subdiv'}}
		\beta_{\multi}
		\Psi_{\multi,\multi'}
		\beta_{\multi'}.
	\end{equation}
\end{proposition}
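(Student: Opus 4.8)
The plan is to exploit the nesting of the spline spaces. Since $\subdiv' = \subdiv \addknot{1} t$ differs from $\subdiv$ only by the insertion of the single knot $t$ for the first variable, every knot of $\subdiv$ is also a knot of $\subdiv'$, so $E_\subdiv \subseteq E_{\subdiv'}$. Both modes $\widehat{Y}_{\activeset,\subdiv}$ and $\widehat{Y}_{\activeset,\subdiv'}$ therefore belong to $E_{\subdiv'}$, and so does their difference. Hence the difference admits a unique expansion $\widehat{Y}_{\activeset,\subdiv} - \widehat{Y}_{\activeset,\subdiv'} = \sum_{\multi \in \mathcal{L}_{\subdiv'}} \beta_{\multi} \phi_{\multi}^{(\subdiv')}$, and by the nodal property $Y_{\subdiv',\gamma}(t^{(\subdiv')}_{\multi}) = \gamma_{\multi}$ recorded right after \eqref{eq:Y:S:alpha}, the coefficient $\beta_{\multi}$ is exactly the value of this difference at the refined knot $t^{(\subdiv')}_{\multi}$.

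First I would establish the three formulas \eqref{eq:expression:coeff:in:new:basis:new:knot:one}--\eqref{eq:expression:coeff:in:new:basis:new:knot:three}. The new mode contributes the term $-(\widehat{\alpha}_{\activeset,\subdiv'})_{\multi}$ to $\beta_{\multi}$, since $\widehat{Y}_{\activeset,\subdiv'}(t^{(\subdiv')}_{\multi}) = (\widehat{\alpha}_{\activeset,\subdiv'})_{\multi}$; it then remains to evaluate $\widehat{Y}_{\activeset,\subdiv}$ at each refined knot. By construction of $\subdiv'_1$, a refined knot with $\ell_1 \in \{1,\dots,\nu\}$ or $\ell_1 \in \{\nu+2,\dots,m_1+1\}$ coincides with an old knot (at index $\ell_1$, respectively $\ell_1-1$, in the first coordinate), so the value of $\widehat{Y}_{\activeset,\subdiv}$ there is the corresponding old coefficient, giving \eqref{eq:expression:coeff:in:new:basis:new:knot:one} and \eqref{eq:expression:coeff:in:new:basis:new:knot:three}. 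For $\ell_1 = \nu+1$ the refined knot sits at $t$ in its first coordinate, strictly between $t^{(\subdiv_{1})}_{(\nu)}$ and $t^{(\subdiv_{1})}_{(\nu+1)}$; since $\widehat{Y}_{\activeset,\subdiv}$ is affine in the first variable on this interval, its value is the convex combination of the two neighbouring old coefficients with the barycentric weights appearing in \eqref{eq:expression:coeff:in:new:basis:new:knot:two}.

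Next I would compute the Gram matrix $\Psi_{\multi,\multi'} = \int_{[0,1]^d} \phi_{\multi}^{(\subdiv')}\phi_{\multi'}^{(\subdiv')}\,dx$. Because each $\phi_{\multi}^{(\subdiv')}$ is a tensor product of one-dimensional hat functions, Fubini factorizes the integral over $[0,1]^d$ into the product $\prod_{j=1}^d \int_0^1 \phi_{\ell_j}\phi_{\ell'_j}$ of one-dimensional mass-matrix entries, which vanishes unless $|\ell_j - \ell'_j| \leq 1$ because the supports of non-adjacent hat functions are disjoint; this yields the block structure and sparsity asserted in \eqref{eq:matrix:Psi}. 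The one-dimensional entries in \eqref{eq:matrix:psi} are then the classical mass-matrix values for piecewise-linear hat functions, obtained by elementary integration of products of the linear pieces ($\int \phi_\ell^2 = \frac{1}{3}\,|\mathrm{support}|$ on interior nodes and $\int \phi_\ell \phi_{\ell\pm1} = \frac{1}{6}\,|\mathrm{shared\ interval}|$). The only subtlety is that the two extreme hat functions, centered at $0$ and at $1$, are truncated by the integration domain $[0,1]$, so only their decreasing (respectively increasing) half contributes, which accounts for the two special diagonal cases $\ell_j = \ell'_j = 1$ and $\ell_j = \ell'_j = m_j+1$. Finally, substituting the expansion of the difference into $\int_{[0,1]^d}(\widehat{Y}_{\activeset,\subdiv} - \widehat{Y}_{\activeset,\subdiv \addknot{1} t})^2\,dx$, expanding the square, and using bilinearity of the integral produces the quadratic form \eqref{eq:quadratic:form:for:Ltwo:difference}.

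The genuinely routine parts are the one-dimensional integrals; the only point demanding care is the bookkeeping of the knot re-indexing caused by the insertion of $t$ (the unit shift of the first index for $\ell_1 \geq \nu+2$) together with the boundary truncation of the two extreme basis functions. Beyond this bookkeeping I expect no real obstacle, since the nesting $E_\subdiv \subseteq E_{\subdiv'}$ and the nodal interpolation property reduce the entire computation to a single linear interpolation at $t$ followed by a tensorized mass-matrix evaluation.
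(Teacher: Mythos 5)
Your proposal is correct and follows essentially the same route as the paper's proof: re-expand $\widehat{Y}_{\activeset,\subdiv}$ in the refined basis by evaluating it at the knots of $\subdiv'$ (using the nodal property and, for $\ell_1=\nu+1$, the affineness of the old mode in the first variable between $t^{(\subdiv_1)}_{(\nu)}$ and $t^{(\subdiv_1)}_{(\nu+1)}$), then compute the tensorized one-dimensional mass-matrix entries and conclude by bilinearity. The paper writes the $\ell_1=\nu+1$ case via explicit evaluation of the one-dimensional hat functions at $t$ rather than invoking affineness, but this is the same calculation.
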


Consider that the coefficients in \eqref{eq:hatY:step:m:minus:one} and \eqref{eq:hatY:step:m} have been computed by optimization of a quadratic function with linear inequality constraints, see Section \ref{subsection:finite:dim:GP}. Then, from the above proposition, the $L^2$ difference between modes can be obtained by the explicit quadratic form \eqref{eq:quadratic:form:for:Ltwo:difference}. Furthermore, the matrix $\Psi$ defining this quadratic form is the tensor product of $d$ banded matrices $\psi^{(1)},\ldots,\psi^{(d)}$.
Hence, the computational cost is eventually linear in the number of multi-dimensional knots.

\subsection{$L^2$ difference when a new active variable is added}
\label{subsubsection:new:variable}

Consider the new variable $d+1$. Again, the choice of the new variable $d+1$ enables to simplify the exposition and the results of Section \ref{subsubsection:new:variable} can be immediately extended to a general new variable $i \in \{ d+1 , \ldots , D\}$.
The candidate mode function is
\begin{equation} \label{eq:hatY:step:m:new:variable}
	\widehat{Y}_{\activeset \cup \{d+1\}  , \, \subdiv + (d+1)}
	=
	\sum_{\multi \in \mathcal{L}_{\subdiv + (d+1)  }} 
	(\widehat{\alpha}_{\activeset \cup \{d+1\}, \subdiv + (d+1) })_{\multi}
	\phi_{\multi}^{(\subdiv + (d+1) )}.
\end{equation}

Then the next proposition provides a computationally efficient formula for the $L^2$ difference between modes.

\begin{proposition}[$L^2$ difference when a new active variable is added] \label{prop:efficient:computation:new:variable}
	Let $\multi  \in \mathcal{L}_{\subdiv + (d+1)}$ and write $\multi$ of the form $(\widetilde{\multi},\ell_{d+1})$ with $\widetilde{\multi} \in \mathcal{L}_{\subdiv}$ and $\ell_{d+1} \in \{1 , 2 \}$. Then let
	\[
	\beta_{\multi} = 
	(\widehat{\alpha}_{\activeset,\subdiv})_{\widetilde{\multi}}
	-
	(\widehat{\alpha}_{\activeset \cup \{d+1\},\subdiv+(d+1)})_{\multi}
	.
	\]
	For $\multi, \multi'  \in \mathcal{L}_{\subdiv + (d+1)}$, define $\Psi_{\multi,\multi'}$ as in Proposition \ref{prop:efficient:computation:new:knot}, but with $\subdiv'$ there replaced by $\subdiv + (d+1)$. Then we have
	\begin{equation} \label{eq:quadratic:form:for:Ltwo:difference:new:variable}
		\int_{[0,1]^{d+1}}
		\left(
		\widehat{Y}_{\activeset , \, \subdiv}(x)
		-
		\widehat{Y}_{\activeset \cup \{d+1\} , \, \subdiv + (d+1)}(x)
		\right)^2
		dx
		=
		\sum_{\multi \in \mathcal{L}_{\subdiv+(d+1)}, \multi' \in \mathcal{L}_{\subdiv+(d+1)}}
		\beta_{\multi}
		\Psi_{\multi,\multi'}
		\beta_{\multi'}.
	\end{equation}
\end{proposition}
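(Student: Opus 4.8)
The plan is to express the integrand $\widehat{Y}_{\activeset,\subdiv} - \widehat{Y}_{\activeset \cup \{d+1\},\, \subdiv + (d+1)}$ as a single element of the refined spline space $E_{\subdiv + (d+1)}$, to read off its coefficients in the basis $\big(\phi_\multi^{(\subdiv + (d+1))}\big)_{\multi \in \mathcal{L}_{\subdiv + (d+1)}}$, and then to evaluate the resulting quadratic form through the Gram matrix $\Psi$. This situation is in fact simpler than the knot-insertion case of Proposition~\ref{prop:efficient:computation:new:knot}: the new one-dimensional subdivision in direction $d+1$ is the minimal one $S^0 = \{-1,0,1,2\}$, having only the two interior knots $0$ and $1$, so that functions in $E_{\subdiv + (d+1)}$ are affine in $x_{d+1}$.

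First I would check that both functions in the difference belong to $E_{\subdiv + (d+1)}$, viewed as functions of the $d+1$ variables $(x_1,\ldots,x_{d+1})$. For $\widehat{Y}_{\activeset \cup \{d+1\},\, \subdiv + (d+1)}$ this is immediate. For $\widehat{Y}_{\activeset,\subdiv}$, treated (with the abuse of notation of Section~\ref{section:MaxMod}) as a function of $(x_1,\ldots,x_{d+1})$ that does not depend on $x_{d+1}$, the point is that it is componentwise piecewise linear in $(x_1,\ldots,x_d)$ with knots in $\subdiv$ and constant --- hence affine --- in $x_{d+1}$; therefore it lies in $E_{\subdiv + (d+1)}$. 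Consequently the difference lies in $E_{\subdiv + (d+1)}$ and can be written as $\sum_{\multi \in \mathcal{L}_{\subdiv + (d+1)}} \beta_\multi \phi_\multi^{(\subdiv + (d+1))}$ for suitable coefficients $\beta_\multi$.

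Next I would identify the $\beta_\multi$ using the cardinal property $Y_{\subdiv,\alpha}(\multiknot) = \alpha_\multi$: for any $f \in E_{\subdiv + (d+1)}$ the coefficient of $\phi_\multi^{(\subdiv + (d+1))}$ equals the value $f\big(t_\multi^{(\subdiv + (d+1))}\big)$ at the corresponding $(d+1)$-dimensional knot. Writing $\multi = (\widetilde{\multi}, \ell_{d+1})$ with $\ell_{d+1} \in \{1,2\}$, the knot $t_\multi^{(\subdiv + (d+1))}$ has its first $d$ coordinates equal to $t_{\widetilde{\multi}}^{(\subdiv)}$ and its last coordinate equal to $0$ or $1$. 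Since $\widehat{Y}_{\activeset,\subdiv}$ does not depend on $x_{d+1}$, its value there equals $\widehat{Y}_{\activeset,\subdiv}\big(t_{\widetilde{\multi}}^{(\subdiv)}\big) = (\widehat{\alpha}_{\activeset,\subdiv})_{\widetilde{\multi}}$, independently of $\ell_{d+1}$, whereas $\widehat{Y}_{\activeset \cup \{d+1\},\, \subdiv + (d+1)}\big(t_\multi^{(\subdiv + (d+1))}\big) = (\widehat{\alpha}_{\activeset \cup \{d+1\},\, \subdiv + (d+1)})_\multi$. Subtracting yields exactly the stated formula for $\beta_\multi$. Equivalently, one may use the partition-of-unity identity $\phi_1^{(S^0)}(x_{d+1}) + \phi_2^{(S^0)}(x_{d+1}) = 1$ on $[0,1]$ together with the tensorization $\phi_{\widetilde{\multi}}^{(\subdiv)}\,\phi_{\ell_{d+1}}^{(S^0)} = \phi_{(\widetilde{\multi},\ell_{d+1})}^{(\subdiv + (d+1))}$ to reach the same coefficients.

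Finally I would expand the squared $L^2$ norm,
\[
\int_{[0,1]^{d+1}} \Bigg( \sum_{\multi} \beta_\multi \phi_\multi^{(\subdiv + (d+1))} \Bigg)^2 dx = \sum_{\multi,\multi'} \beta_\multi \beta_{\multi'} \int_{[0,1]^{d+1}} \phi_\multi^{(\subdiv + (d+1))}\, \phi_{\multi'}^{(\subdiv + (d+1))}\, dx ,
\]
and invoke Proposition~\ref{prop:efficient:computation:new:knot} (with $\subdiv'$ replaced by $\subdiv + (d+1)$), which already establishes that this matrix of $L^2$ inner products is precisely $\Psi$: by tensorization each integral factorizes into a product over the $d+1$ directions, each factor being an inner product of one-dimensional hat functions that vanishes unless $|\ell_j - \ell'_j| \leq 1$ and otherwise equals $\psi^{(j)}_{\ell_j,\ell'_j}$. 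This gives \eqref{eq:quadratic:form:for:Ltwo:difference:new:variable}. The only genuinely new point relative to Proposition~\ref{prop:efficient:computation:new:knot} --- and hence the main, though mild, thing to verify carefully --- is the membership of the $x_{d+1}$-constant extension of $\widehat{Y}_{\activeset,\subdiv}$ in $E_{\subdiv + (d+1)}$ and the consequent fact that its coefficients do not depend on $\ell_{d+1}$; the Gram-matrix computation is then identical to the previous case.
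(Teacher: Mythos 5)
Your proposal is correct and follows essentially the same route as the paper: the paper rewrites $\widehat{Y}_{\activeset,\subdiv}$ in the enlarged basis via the partition-of-unity identity $\sum_{\ell_{d+1}=1}^{2}\phi^{(S^0)}_{\ell_{d+1}}(x_{d+1})=1$, obtaining the same coefficients $\gamma_{\multi}=(\widehat{\alpha}_{\activeset,\subdiv})_{\widetilde{\multi}}$, and then declares the Gram-matrix computation identical to that of Proposition~\ref{prop:efficient:computation:new:knot}. Your reading of the coefficients by evaluation at the $(d+1)$-dimensional knots is just the cardinal-basis restatement of that same step, which you yourself note is equivalent, so there is nothing substantively different to compare.
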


The discussion of Proposition \ref{prop:efficient:computation:new:variable} is the same as for Proposition \ref{prop:efficient:computation:new:knot}.

\section{Technical conditions for Theorem \ref{theorem:convergence}} \label{appendix:section:adaptation:condition}

As for Theorem \ref{theorem:extension:convergence:spline}, for each $\activeset \subseteq \{1 , \ldots , D\}$, we consider a set of functions in $\mathcal{C}( [0,1]^{|\activeset|} , \mathbb{R} )$ satisfying strict inequalities: $\strictineqset_{\activeset} \subseteq \mathcal{C}_{\activeset}$. We recall that the MaxMod algorithm is initialized with $\activeset_0 \subseteq \{ 1, \ldots , D \}$ and $\subdiv^{(0)} \in \subdivset$.

Conditions \ref{cond:non:empty}, \ref{cond:spans:Rn} and \ref{cond:interior:non:empty} are replaced by the following conditions.

%\begin{condition}[extension of Condition \ref{cond:non:empty}: initial knots flexibility, 1] \label{cond:non:empty:D}
	\begin{condition}[extension of Condition \ref{cond:non:empty}] \label{cond:non:empty:D}
		Consider $\activeset \supseteq \activeset_0 $ and $\subdiv \in \subdivset_{\activeset}$ such that, for $j \in \activeset_0$, we have $\subdiv_{j} \supseteq \subdiv^{(0)}_{j}  $.
		Then the set 
		\[
		\{ \alpha \in A_{\subdiv};
		Y_{\subdiv,\alpha} \in \interpset_{X_{\activeset},y^{(n)}}
		\cap \strictineqset_{\activeset}
		\}
		\]
		is non-empty.
	\end{condition}
	Condition \ref{cond:non:empty:D} means that whenever the set of active variables has been increased compared to $\activeset_0$ and knots have been inserted to the variables in $\activeset_0$ or to the new active variables, then it is possible to find an interpolating function that satisfies the strict inequalities.

	%\begin{condition}[extension of Condition \ref{cond:spans:Rn}: initial knots flexibillity, 2] \label{cond:spans:Rn:D}
		\begin{condition}[extension of Condition \ref{cond:spans:Rn}] \label{cond:spans:Rn:D}
			We have
			\[
			\left\{
			\left(
			Y_{\subdiv^{(0)} , \alpha} \left(x_{\activeset_0}^{(i)}\right)
			\right)_{i=1,\ldots,n}
			;
			\alpha \in \subdiv^{(0)} 
			\right\}
			=
			\mathbb{R}^n.
			\]
		\end{condition}

		For $\activeset \subseteq \{1 , \ldots , D\}$, we let $\Hilb_{\activeset}$ be the RKHS of $k_{\activeset}$ with norm $||\cdot ||_{\Hilb_{\activeset}}$ (recall that $\Hilb_{\activeset}$ is a set of functions from $[0,1]^{|\activeset|}$ to $\mathbb{R}$).
		
		%\begin{condition}[extension of Condition \ref{cond:interior:non:empty}: RKHS interior] \label{cond:interior:non:empty:D}
			\begin{condition}[extension of Condition \ref{cond:interior:non:empty}] \label{cond:interior:non:empty:D}
				For all $\activeset \subseteq \{1 , \ldots , D\}$, for all $h \in \strictineqset_{\activeset}$, for all $r \in \mathbb{N}$, $U = (u_1,\ldots,u_r) \in ([0,1]^{|\mathcal{I}|})^r$, 
				with $u_1,\ldots,u_r$ two-by-two distinct, 
				letting $v^{(r)} = (h(u_1), \ldots, h(u_r)) \in \R^r$, 
				the set $\mathrm{int}_{||.||_{\Hilb_{\activeset}}}(\Hilb_{\activeset} \cap \ineqset_{\activeset}) \cap \interpset_{U,v^{(r)}}$ is non-empty.
			\end{condition}

			Consider the case where $\ineqset_D$ is given by \eqref{eq:boundedness:D}, \eqref{eq:monotonicity:D} and \eqref{eq:convexity:D} (boundedness, monotonicity and convexity). Then the same discussions of Conditions \ref{cond:non:empty} and \ref{cond:interior:non:empty} of Theorem \ref{theorem:extension:convergence:spline} apply to Conditions \ref{cond:non:empty:D} and \ref{cond:interior:non:empty:D} here. 
			In particular, when $\ineqset_D$ is given by \eqref{eq:boundedness:D}, we choose $\strictineqset_{\activeset}$ as the set of functions in $\mathcal{C}([0,1]^{|\activeset|},\mathbb{R})$ that are strictly between $a$ and $b$.  When $\ineqset_D$ is given by \eqref{eq:monotonicity:D}, we choose $\strictineqset_{\activeset}$ as the set of functions in $\mathcal{C}([0,1]^{|\activeset|},\mathbb{R})$ that are strictly increasing.  When $\ineqset_D$ is given by \eqref{eq:convexity:D}, we choose $\strictineqset_{\activeset}$ as the set of functions in $\mathcal{C}([0,1]^{|\activeset|},\mathbb{R})$ which one-dimensional cuts are strictly convex.
			With these choices, Conditions \ref{cond:non:empty:D} and \ref{cond:interior:non:empty:D} are mild and mean that there are sufficiently many knots and sufficiently many active variables at the beginning of the MaxMod algorithm and that the Hilbert space $\Hilb_{\activeset}$ is sufficiently rich for $\activeset \supseteq \activeset_0$. Finally, Lemma \ref{lem:non-empty:contains} can be straightforwardly adapted.
			
			Condition \ref{cond:piSC:subset:C} is replaced by the following one.
			%\begin{condition}[extension of Condition \ref{cond:piSC:subset:C}: constraint set stabi\-lity  by projection] \label{cond:piSC:subset:C:D}
				\begin{condition}[extension of Condition \ref{cond:piSC:subset:C}] \label{cond:piSC:subset:C:D}
					For all $\activeset \subseteq \{1 , \ldots , D\}$,  
					for all subdivision $\subdiv \in \subdivset_{\activeset}$, $\pi_{\subdiv} (\ineqset_{\activeset}) \subseteq \ineqset_{\activeset} $.
				\end{condition}
				As for Theorem \ref{theorem:extension:convergence:spline}, Condition \ref{cond:piSC:subset:C:D} can be shown to hold for boundedness, monotonicity and input-wise convexity constraints.
				Finally, Condition \ref{cond:PfC:subset:C} is replaced by the following one.
				%\begin{condition}[extension of Condition \ref{cond:PfC:subset:C}: constraint set stability by multiaffine extension] \label{cond:PfC:subset:C:D}
					\begin{condition}[extension of Condition \ref{cond:PfC:subset:C}] \label{cond:PfC:subset:C:D}
						For any $\activeset = (a_1,\ldots,a_d) \subseteq \{1 , \ldots , D\}$, for a closed set $\clos = \clos_{a_1} \times \dots \times \clos_{a_d} \subseteq [0,1]^{d}$ with $0,1 \in \clos_{a_j}$ for $j=1,\ldots,d$, let us define $P_{\clos \to \domain}$ as in Proposition \ref{prop:multiaffine:extension}.
						Then, for $f \in \ineqset_{\activeset}$, with $f_{|\clos}$ the restriction of $f$ to $\clos$, we have $P_{\clos \to \domain} (f_{|F}) \in \ineqset_{\activeset}$.  
					\end{condition}
					Remark that Lemma \ref{lem:cond:PfC:subset:C} can be extended straightforwardly.

					\section{Proofs} \label{appendix:section:proofs}
					
					%%%%%%%%%%%%%%%%%%%%%%%%%%%%%%%%%%%%%%%%%%%%%%%%%%%%%%%%
					\begin{proof}[{\bf Proof of Lemma \ref{lem:sup:finite}}]
						%%%%%%%%%%%%%%%%%%%%%%%%%%%%%%%%%%%%%%%%%%%%%%%%%%%%%%%%
						Since the reward $R_{\activeset_m, \subdiv^{(m)}}(i, t)$ is smaller than $\max(\Delta,\Delta')$,
						it is sufficient to show that, for $i \in \{1 , \ldots , D\}$,
						\begin{equation} \label{eq:max:of:sup}
							\underset{
								\substack{
									t \in [0,1], \\
									\text{s.t. } d(t , \subdiv^{(m)}_{ i })\, \geq \, b_m \\
									\text{if $i \in \activeset_m$}
								}
							}{\sup}
							I_{\activeset_m, S^{(m)}}(i ,t) 
							%+ \Delta d(t, \subdiv^{(m)}_{ i })
							< \infty.
						\end{equation}
						If $i \not \in \activeset_m$,  $I_{\activeset_m, S^{(m)}}(i ,t)$ in \eqref{eq:max:of:sup} does not depend on $t$ (since the knots for the variable $i$, after this variable has been added, are $\{-1,0,1,2\}$ independently on $t$). Thus the $\sup$ in \eqref{eq:max:of:sup} is finite.
						
						Consider now $i \in \activeset_m$.
						Let $E_{i,m}$ be the set of $t \in [0,1]$ such that $d \big(t, \subdiv_{i}^{(m)} \big) \geq b_m$.
						By continuity of $d \big(., \subdiv_{i}^{(m)} \big)$, notice that $E_{i,m}$ is compact.
						
						Consider the function $\widehat{Y}_{\activeset_m, \, S^{(m)}}$. 
						We can write it in the thinner space obtained by inserting the knot $t$ at coordinate $i$, in the form 
						$Y_{S^{(m)} \addknot{i} t , \, \alpha_{m,t}}$ 
						for some $\alpha_{m,t} \in A_{S^{(m)} \addknot{i} t}$. 
						
						The coefficients $\alpha_{m,t}$ are equal to the values of the 
						function $\widehat{Y}_{\activeset_m,S^{(m)}}$ at the (multidimensional) 
						knots in $S^{(m)} \addknot{i} t$, as remarked just after \eqref{eq:Y:S:alpha}.
						Since $\widehat{Y}_{\activeset_m, S^{(m)}}$ does not depend on $t$ and is continuous on 
						$[0, 1]^{\vert \activeset_m \vert}$, 
						it follows that $\alpha_{m,t}$ is bounded with respect to $t \in E_{i,m}$.\\
						Furthermore, let $\lambda_{\max}(M)$ and $\lambda_{\min}(M)$ be the largest and smallest eigenvalues of a matrix $M$.
						By definition of $\widehat{\alpha}_{\activeset_m,S^{(m)} \addknot{i} t }$, 
						and since $\widehat{Y}_{\activeset_m, S^{(m)}}$ belongs to  $\interpset_{X^{(\activeset_m)}, y^{(n)}} \cap \, \ineqset_{\activeset_m}$, we have:
						\begin{align*}
							& \widehat{\alpha}_{\activeset_m,S^{(m)} \addknot{i} t }^\top
							k_{\activeset_m}( S^{(m)} \addknot{i} t , S^{(m)} \addknot{i} t  )^{-1}
							\widehat{\alpha}_{\activeset_m,S^{(m)} \addknot{i} t }
							\\
							& \leq 
							\alpha_{m,t}^\top
							k_{\activeset_m}( S^{(m)} \addknot{i} t , S^{(m)} \addknot{i} t  )^{-1}
							\alpha_{m,t} \\
							& \leq 
							\frac{||\alpha_{m,t}||^2}{\lambda_{\min}
								\left(
								k_{\activeset_m}( S^{(m)} \addknot{i} t , S^{(m)} \addknot{i} t  )
								\right)}.
						\end{align*}
						Furthermore, 
						\begin{align*}
							& \widehat{\alpha}_{\activeset_m,S^{(m)} \addknot{i} t }^\top
							k_{\activeset_m}( S^{(m)} \addknot{i} t , S^{(m)} \addknot{i} t  )^{-1}
							\widehat{\alpha}_{\activeset_m,S^{(m)} \addknot{i} t }
							\\
							& \geq 
							\frac{||\widehat{\alpha}_{\activeset_m,S^{(m)} \addknot{i} t }||^2}
							{\lambda_{\max}
								\left( k_{\activeset_m}( S^{(m)} \addknot{i} t , S^{(m)} \addknot{i} t  ) \right)
							}.
						\end{align*} 
						Hence, we have that
						\[
						\sup_{t \in E_{i,m}}
						|| \widehat{\alpha}_{\activeset_m,S^{(m)} \addknot{i} t }  ||^2
						\leq 
						\sup_{t \in E_{i,m}}
						\frac{ \lambda_{\max}
							\left(
							k_{\activeset_m}( S^{(m)} \addknot{i} t , S^{(m)} \addknot{i} t  )
							\right) }{\lambda_{\min}
							\left(
							k_{\activeset_m}( S^{(m)} \addknot{i} t , S^{(m)} \addknot{i} t  )
							\right)}
						\sup_{t \in E_{i,m}}
						||\alpha_{m,t}||^2.
						\]
						By assumption, $k_{\activeset_m}$ is continuous and the matrix $k_{\activeset_m}( S^{(m)} \addknot{i} t , S^{(m)} \addknot{i} t  )$ is invertible for all $t$
						in the compact set $E_{i,m}$. Thus, the ratio of eigenvalues above is bounded. 
						Hence $|| \widehat{\alpha}_{\activeset_m,S^{(m)} \addknot{i} t }  ||$ is bounded with respect to $t \in E_{i,m}$ and thus also the supremum of the function $\widehat{Y}_{\activeset_m,S^{(m)} \addknot{i} t}$ is bounded with respect to $t \in E_{i,m}$. Hence the $\sup$ in \eqref{eq:max:of:sup} is indeed finite, which concludes the proof.
					\end{proof}
					
					\begin{proof}[{\bf Proof of Proposition \ref{prop:efficient:computation:new:knot}}]
						
						The current mode function 
						\begin{equation} \label{eq:hatY:step:m:minus:one:bis}
							\widehat{Y}_{\activeset , \, \subdiv}
							=
							\sum_{\widetilde{\multi} \in \mathcal{L}_{\subdiv}} 
							(\widehat{\alpha}_{\activeset,\subdiv})_{\widetilde{\multi}}
							\phi_{\widetilde{\multi}}^{(\subdiv)}
						\end{equation}
						can be expressed of the form
						\[
						\widehat{Y}_{\activeset , \, \subdiv}
						=
						\sum_{\multi \in \mathcal{L}_{\subdiv'}} 
						\gamma_{\multi}
						\phi_{\multi}^{(\subdiv')}.
						\]
						Let us express $\gamma_{\multi}$. Let $\multi = (\ell_{1},\ldots,\ell_{d}) \in \mathcal{L}_{\subdiv'}$. First, if  $\ell_1 \in \{1 , \ldots , \nu \}$, we have
						\[
						\gamma_{\multi}
						=
						\widehat{Y}_{\activeset , \, \subdiv}
						(
						(
						t^{(\subdiv'_{1})}_{(\ell_{1})}
						,
						\ldots
						,
						t^{(\subdiv'_{d})}_{(\ell_{d})}
						) 
						)
						\]
						and because $t^{(\subdiv'_1)}_{(\ell_1)} = t^{(\subdiv_1)}_{(\ell_1)}$, we obtain from \eqref{eq:hatY:step:m:minus:one:bis}, $\widehat{Y}_{\activeset , \, \subdiv}
						(
						(
						t^{(\subdiv'_{1})}_{(\ell_{1})}
						,
						\ldots
						,
						t^{(\subdiv'_{d})}_{(\ell_{d})}
						) 
						)
						=
						(\widehat{\alpha}_{\activeset,\subdiv})_{\multi}
						$
						and thus
						\[
						\gamma_{\multi}
						=
						(\widehat{\alpha}_{\activeset,\subdiv})_{\multi}.
						\]
						
						Second, if  $\ell_1 \in \{\nu+2 , \ldots , m_1 + 1 \}$, we have
						\[
						\gamma_{\multi}
						=
						\widehat{Y}_{\activeset , \, \subdiv}
						(
						(
						t^{(\subdiv'_{1})}_{(\ell_{1})}
						,
						\ldots
						,
						t^{(\subdiv'_{d})}_{(\ell_{d})}
						) 
						)
						\]
						and because $t^{(\subdiv'_1)}_{(\ell_1)}  = t^{(\subdiv_1)}_{(\ell_1-1)}$, we obtain from \eqref{eq:hatY:step:m:minus:one:bis},
						\[
						\widehat{Y}_{\activeset , \, \subdiv}
						(
						(
						t^{(\subdiv'_{1})}_{(\ell_{1})}
						,
						\ldots
						,
						t^{(\subdiv'_{d})}_{(\ell_{d})}
						) 
						)
						=
						(\widehat{\alpha}_{\activeset,\subdiv})_{(\ell_1-1,\ell_2,\ldots,\ell_{d})}.
						\]
						Hence 
						\[
						\gamma_{\multi} = (\widehat{\alpha}_{\activeset,\subdiv})_{(\ell_1-1,\ell_2,\ldots,\ell_{d})}.
						\]
						Finally, if  $\ell_1 = \nu+1$, we have
						\begin{align*}
							\gamma_{\multi}
							&
							=
							\widehat{Y}_{\activeset , \, \subdiv}
							(
							(
							t^{(\subdiv'_{1})}_{(\nu+1)}
							,
							t^{(\subdiv'_{2})}_{(\ell_{2})}
							,
							\ldots
							,
							t^{(\subdiv'_{d})}_{(\ell_{d})}
							) 
							)
							\\
							& =
							(\widehat{\alpha}_{\activeset,\subdiv})_{(\nu,\ell_2,\ldots,\ell_{d})}
							\phi_{t^{(\subdiv_1)}_{(\nu-1)},t^{(\subdiv_1)}_{(\nu)},t^{(\subdiv_1)}_{(\nu+1)}} ( t^{(\subdiv'_1)}_{(\nu+1)} )
							+
							(\widehat{\alpha}_{\activeset,\subdiv})_{(\nu+1,\ell_2,\ldots,\ell_{d})}
							\phi_{t^{(\subdiv_1)}_{(\nu)},t^{(\subdiv_1)}_{(\nu+1)},t^{(\subdiv_1)}_{(\nu+2)}} ( t^{(\subdiv'_1)}_{(\nu+1)} ),
						\end{align*}
						from the definition of the multidimensional hat basis functions and the position of $(
						t^{(\subdiv'_{1})}_{(\nu+1)}
						,
						t^{(\subdiv'_{2})}_{(\ell_2)}
						,
						\linebreak[1]
						\ldots
						,
						t^{(\subdiv'_{d})}_{(\ell_d)}
						) $
						relatively to the knots in the subdivision $\subdiv$. 
						Hence we have, since  $t^{(\subdiv'_1)}_{(\nu+1)} = t$,
						\begin{align*}
							\gamma_{\multi}
							=
							(\widehat{\alpha}_{\activeset,\subdiv})_{(\nu,\ell_2,\ldots,\ell_{d})}
							\frac{ t^{(\subdiv_1)}_{(\nu+1)} - t }{t^{(\subdiv_1)}_{(\nu+1)} - t^{(\subdiv_1)}_{(\nu)}}
							+
							(\widehat{\alpha}_{\activeset,\subdiv})_{(\nu+1,\ell_2,\ldots,\ell_{d})}
							\frac{ t - t^{(\subdiv_1)}_{(\nu)}  }{t^{(\subdiv_1)}_{(\nu+1)} - t^{(\subdiv_1)}_{(\nu)}}.
						\end{align*}
						
						Hence, we have shown that
						\[
						\widehat{Y}_{\activeset , \, \subdiv}
						-
						\widehat{Y}_{\activeset , \, \subdiv'}
						=
						\sum_{\multi \in \mathcal{L}_{\subdiv'}} 
						\beta_{\multi}
						\phi_{\multi}^{(\subdiv')},
						\]
						with $\beta_{\multi}$ as given in the proposition. To conclude the proof, by bilinearity of the square $L^2$ distance, it remains to prove that for $\multi = (\ell_{1},\ldots,\ell_{d}) \in \mathcal{L}_{\subdiv'}$, $\multi' = (\ell'_{1},\ldots,\ell'_{d}) \in \mathcal{L}_{\subdiv'}$,
						\begin{equation} \label{eq:int:prod:phi}
							\int_{[0,1]^d}
							\phi_{\multi}^{(\subdiv')}
							(x)
							\phi_{\multi'}^{(\subdiv')}
							(x)
							dx
							=
							\Psi_{\multi,\multi'},
						\end{equation}
						with $\Psi_{\multi,\multi'}$  as given in the proposition. If there is $j \in \{ 1,\ldots,d \}$ such that $
						| \ell_j - \ell'_j | > 1$, then the supports of $\phi_{\multi}^{(\subdiv')}$
						and
						$\phi_{\multi'}^{(\subdiv')}$ are disjoint and thus \eqref{eq:int:prod:phi} is zero. Consider now that for $j \in \{ 1,\ldots,d \}$, $
						| \ell_j - \ell'_j | \leq 1$. Then \eqref{eq:int:prod:phi} is equal to 
						\[
						\prod_{j=1,\ldots,d}
						\int_{0}^1
						\phi_{t^{(\subdiv'_j)}_{(\ell_j-1)},t^{(\subdiv'_j)}_{(\ell_j)},t^{(\subdiv'_j)}_{(\ell_j+1)}}(x)
						\phi_{t^{(\subdiv'_j)}_{(\ell'_j-1)},t^{(\subdiv'_j)}_{(\ell'_j)},t^{(\subdiv'_j)}_{(\ell'_j+1)}}(x)
						dx.
						\]
						Let now $j \in \{ 1 , \ldots , d\}$ and write $t_{\ell}$ as a short-hand for $ t^{(\subdiv'_j)}_{(\ell)}$, for $\ell \in \{0 , \ldots , m_{\subdiv'_j} +1 \}$. Write also $\phi_{\ell}$ as a short-hand for $\phi_{t^{(\subdiv'_j)}_{(\ell-1)},t^{(\subdiv'_j)}_{(\ell)},t^{(\subdiv'_j)}_{(\ell+1)}}$.

						Consider $\ell , \ell' \in \{1 , \ldots , m_j +1\}$. If $\ell = \ell' = 1$, we have
						\begin{align*}
							\int_{0}^1
							\phi_{\ell}(x)
							\phi_{\ell'}(x)
							=
							\int_{t_1}^{t_2}
							\left(
							\frac{ t_2 - x }{ t_2 - t_1 }
							\right)^2
							dx
							=
							\frac{t_2 - t_1}{3}.
						\end{align*}
						If $\ell = \ell' = m_j+1$, we have
						\begin{align*}
							\int_{0}^1
							\phi_{\ell}(x)
							\phi_{\ell'}(x)
							=
							\int_{t_{m_j}}^{t_{m_j+1}}
							\left(
							\frac{ x - t_{m_j} }{ t_{m_j+1} - t_{m_j} }
							\right)^2
							dx
							=
							\frac{t_{m_j+1} - t_{m_j}}{3}.
						\end{align*}
						
						If $\ell = \ell' \in \{ 2 , \ldots ,  m_j \}$, we have
						\begin{align*}
							\int_{0}^1
							\phi_{\ell}(x)
							\phi_{\ell'}(x)
							=
							\int_{t_{\ell-1}}^{t_{\ell}}
							\left(
							\frac{ x -  t_{\ell-1}  }{ t_{\ell} - t_{\ell-1} }
							\right)^2
							dx
							+
							\int_{t_{\ell}}^{t_{\ell+1}}
							\left(
							\frac{ t_{\ell+1} -  x   }{ t_{\ell+1} - t_{\ell} }
							\right)^2
							dx
							=
							\frac{t_{\ell+1} - t_{\ell-1}}{3}.
						\end{align*}
						If $\ell' = \ell+1 \in \{ 2 , \ldots ,  m_j +1\}$, we have
						\begin{align*}
							\int_{0}^1
							\phi_{\ell}(x)
							\phi_{\ell'}(x)
							=
							\int_{t_{\ell}}^{t_{\ell+1}}
							\left(
							\frac{  t_{\ell+1} - x }{ t_{\ell+1} - t_{\ell} }
							\right)
							\left(
							\frac{  x - t_{\ell} }{ t_{\ell+1} - t_{\ell} }
							\right)
							dx
							=
							\frac{t_{\ell+1} - t_{\ell}}{6}.
						\end{align*}
						Finally, if $\ell' = \ell - 1 \in \{ 1 , \ldots ,  m_j \}$, we have
						\begin{align*}
							\int_{0}^1
							\phi_{\ell}(x)
							\phi_{\ell'}(x)
							=
							\int_{t_{\ell-1}}^{t_{\ell}}
							\left(
							\frac{  t_{\ell} - x }{ t_{\ell} - t_{\ell-1} }
							\right)
							\left(
							\frac{  x - t_{\ell-1} }{ t_{\ell} - t_{\ell-1} }
							\right)
							dx
							=
							\frac{t_{\ell} - t_{\ell-1}}{6}.
						\end{align*}
						Hence, we have shown that for  $j \in \{ 1 , \ldots , d\}$ and $\ell_j, \ell'_j \in \{ 1 , \ldots , m_j +1\}$ with $|\ell_j- \ell'_j| \leq 1$, 
						\[
						\int_{0}^1
						\phi_{t^{(\subdiv'_j)}_{(\ell_j-1)},t^{(\subdiv'_j)}_{(\ell_j)},t^{(\subdiv'_j)}_{(\ell_j+1)}}(x)
						\phi_{t^{(\subdiv'_j)}_{(\ell'_j-1)},t^{(\subdiv'_j)}_{(\ell'_j)},t^{(\subdiv'_j)}_{(\ell'_j+1)}}(x)
						dx
						=
						\psi^{(j)}_{\ell_j,\ell'_j}
						\]
						with the notation of the proposition. This concludes the proof.
					\end{proof}
					
					\begin{proof}[{\bf Proof of Proposition \ref{prop:efficient:computation:new:variable}}]
						
						The current mode function 
						\begin{equation*} 
							\widehat{Y}_{\activeset , \, \subdiv}
							=
							\sum_{\widetilde{\multi} \in \mathcal{L}_{\subdiv}} 
							(\widehat{\alpha}_{\activeset,\subdiv})_{\widetilde{\multi}}
							\phi_{\widetilde{\multi}}^{(\subdiv)}
						\end{equation*}
						can be expressed of the form
						\[
						\widehat{Y}_{\activeset , \, \subdiv}
						=
						\sum_{\widetilde{\multi} \in \mathcal{L}_{\subdiv}} 
						\sum_{\ell_{d+1} =1}^2
						(\widehat{\alpha}_{\activeset,\subdiv})_{\widetilde{\multi}}
						\phi_{\widetilde{\multi}}^{(\subdiv)}
						\phi_{
							t^{(\subdiv^{0})}_{(\ell_{d+1}-1)},
							t^{(\subdiv^{0})}_{(\ell_{d+1})},
							t^{(\subdiv^{0})}_{(\ell_{d+1}+1)} }
						=
						\sum_{\multi \in \mathcal{L}_{\subdiv'}} 
						\gamma_{\multi}
						\phi_{\multi}^{(\subdiv+(d+1))}
						\]
						with $\gamma_{\multi} =  (\widehat{\alpha}_{\activeset,\subdiv})_{\widetilde{\multi}}$ when $\multi = (\widetilde{\multi} , \ell_{d+1})$. Hence we obtain 
						\[
						\widehat{Y}_{\activeset , \, \subdiv}
						-
						\widehat{Y}_{\activeset \cup \{ d+1 \} , \, \subdiv + (d+1)}
						=
						\sum_{\multi \in \mathcal{L}_{\subdiv + (d+1)}} 
						\beta_{\multi}
						\phi_{\multi}^{(\subdiv+(d+1))},
						\]
						with $\beta_{\multi}$ as defined in the proposition. The rest of the proof is then identical to the proof of Proposition \ref{prop:efficient:computation:new:knot}.
					\end{proof}

					\begin{proof}[{\bf Proof of Proposition \ref{prop:multiaffine:extension}}]
						%%%%%%%%%%%%%%%%%%%%%%%%%%%%%%%%%%%%%%%%%%%%%%%%%%%%%%%%%%%%%%%%%%%%%%%%%%
						Let us first prove that
						\[
						L_1 \dots L_d f(t)
						= \sum_{\epsilon_1, \dots, \epsilon_d \in \{-, +\}} 
						\Bigg(\prod_{j = 1}^d \omega_{\epsilon_j}(t_j) \Bigg)
						f(t_1^{\epsilon_1}, \dots, t_d^{\epsilon_d}).
						\]
						In dimension $1$, notice that the affine extension can be rewritten as
						\begin{equation} \label{eq:linExtGen}
							L_B f(t) = \omega_-(t) f(t^-) + \omega_+(t) f(t^+)
							= \sum_{\epsilon \in \{-, + \}} \omega_\epsilon(t) f(t^\epsilon),
						\end{equation}
						with the convention chosen for $\omega_-(t), \omega_+(t)$ for $t \in B$. 
						Indeed, in that case $t^- = t^+ = t$, and thus $L_B f (t) = \frac{1}{2}(f(t) + f(t)) =  f(t)$.
						Then, let us show by induction on $i = d, d-1, \dots, 1$ the property
						\begin{align} \label{eq:multiExtFormulaAux}
							& (\mathcal{P}_{i}): \quad  \forall t \in \prodF{i-1} \times [0,1]^{d-i+1},
							\notag
							\\
							&  \quad L_{i} \dots L_d f (t)
							= \sum_{\epsilon_{i}, \dots, \epsilon_d \in \{-, +\}} 
							\Bigg(\prod_{j = i}^d \omega_{\epsilon_j} (t_j) \Bigg) f(t_1, \dots, t_{i-1}, t_i^{\epsilon_i}, \dots, t_d^{\epsilon_d}).
						\end{align} 
						First $\mathcal{P}_d$ is true, by the expression (\ref{eq:linExtGen}) for $L_{\clos_d}$.
						Now, assume that $\mathcal{P}_{i+1}$ is true (for $i \in \{1, \dots, d-1\}$). 
						Then, we have, for $t \in \prodF{i-1} \times [0,1]^{d-i+1}$:
						\begin{align*}
							& L_i \dots L_d f(t)  = L_{\clos_i} (L_{i+1} \dots L_d f(., t_{\sim i}))(t_i) \\
							& =  \sum_{\epsilon_i \in \{-, +\}} \omega_{\epsilon_i}(t_i) (L_{i+1} \dots L_d f)(t_i^{\epsilon_i}, t_{\sim i}) \\
							&=  \sum_{\epsilon_i \in \{-, +\}} \omega_{\epsilon_i}(t_i) 
							\Bigg( \sum_{\epsilon_{i+1}, \dots, \epsilon_d \in \{-, +\}} 
							\Bigg(\prod_{j = i+1}^d \omega_{\epsilon_j}(t_j)\Bigg)
							f(t_1, \dots, t_{i-1},  t_i^{\epsilon_i}, t_{i+1}^{\epsilon_{i+1}}, \dots, t_d^{\epsilon_d}) \Bigg) 
						\end{align*}
						which gives $\mathcal{P}_i$. Finally $\mathcal{P}_1$ gives \eqref{eq:multiExtFormula}. 
						Furthermore, notice that:
						$$ \sum_{\epsilon_1, \dots, \epsilon_d \in \{-, +\}} 
						\Bigg(\prod_{j = 1}^d \omega_{\epsilon_j}(t_j) \Bigg)
						= \prod_{j=1}^d (\omega_-(t_j) + \omega_+(t_j)) = 1.$$
						
						%%%%%%%%%%%%%%%%%%%%%%%%%%%%%%%%%%%%%%%%%%%%%%%%%%%%%%%%%%%%%%%%%%%%%%%%%%
						
						%%%%%%%%%%%%%%%%%%%%%%%%%%%%%%%%%%%%%%%%%%%%%%%%%%%%%%%%%%%%%%%%%%%%%%%%%%
						Let us now justify the existence and unicity of $\extfun$.\\
						Firstly, if $g$ exists, then necessarily $g = L_1 \dots L_d f$.
						Indeed, by assumption, when $t_{\sim 1} = (t_2, \dots, t_d)$ is fixed in $[0, 1]^{d-1}$, the univariate function 
						$$ g(., t_{\sim 1}): u_1 \mapsto g(u_1, t_{\sim 1})$$
						is continuous and affine on all intervals of $[0, 1] \setminus \clos_1$.
						By property of the affine extension, it is equal to the affine extension of its restriction to $\clos_1$:
						$$ g(., t_{\sim 1}) = L_{\clos_1}(g(., t_{\sim 1})_{\vert \clos_1}) = L_{\clos_1} \left( g_{\vert \clos_1 \times [0, 1]^{d-1}}(., t_{\sim 1}) \right).$$
						This shows that
						$$ g = L_1 \, g_{\vert \clos_1 \times [0, 1]^{d-1}}.$$
						By the same reasoning,
						$$ g_{\vert \clos_1 \times [0, 1]^{d-1}} = L_2 \, g_{\vert \clos_1 \times \clos_2 \times [0, 1]^{d-2}}, $$
						and by an immediate induction, 
						$$g = L_1 \dots L_d \, g_{\vert \clos} = L_1 \dots L_d f.$$
						
						\noindent Now, let us check that the function $g = L_1 \dots L_d f$ verifies the conditions of the proposition.
						\begin{itemize}
							\item It is equal to $f$ on $\clos$, since each $L_i$ leaves the values of its input function unchanged on $\clos$.
							\item Let check that all 1-dimensional cuts of $g$ are affine on all intervals of the complements of the $\clos_i$'s.
							Indeed, consider the explicit formula (\ref{eq:multiExtFormula}). 
							Let fix $i \in \{1, \dots, d\}$ and consider an interval $I_i$ included in $[0, 1] \setminus \clos_i$.
							Without loss of generality, we assume that $I_i  = [a_i, b_i]$ is closed, with $a_i < b_i$.
							Then, for all $t_i \in I_i$, we have $a_i^- = t_i^- < t_i^+ = b_i^+$. 
							Thus, $t_i^-, t_i^+$ do not depend on $t_i$.
							Consequently, $\omega_+(t_i) = \frac{t_i - a_i^-}{b_i^+ - a_i^-}$  
							and $\omega_-(t_i) = 1 - \omega_+(t_i)$ depend linearly on $t_i$. 
							Finally, by (\ref{eq:multiExtFormula}), for all $t_{\sim i} \in [0, 1]^{d-1}$, 
							the cut function $t_i \mapsto L_1 \dots L_d f (t)$ is affine on $I_i$.
							\item To prove continuity, by composition, it is sufficient to prove that for $i \in \{1 , \ldots , d\}$, when $h$ belongs to $\mathcal{C}(\prodF{i}\times [0,1]^{d-i},\mathbb{R})$ then $L_i \, h$ belongs to $\mathcal{C}(\prodF{(i-1)}\times [0,1]^{d-i+1},\mathbb{R})$. Let us thus consider $i$, and $h$ as just described.
							Let $t  = (t_1,\ldots,t_d) \in \prodF{(i-1)}\times [0,1]^{d-i+1}$ and consider a sequence $t_n  = (t_{n,1},\ldots,t_{n,d}) \in \prodF{(i-1)}\times [0,1]^{d-i+1}$ converging to $t$. We will show that $L_i \, h (t_n)$ converges to $L_i \, h (t)$.
							
							Up to extracting subsequences, it suffices to consider the cases (1) $t_{n,i} \in \clos_i$, (2) $t_{n,i} \not \in \clos_i$, $t_{n,i} \leq t_i$, $t_{n,i}$ increasing and (3) $t_{n,i} \not \in \clos_i$, $t_{n,i} \geq t_i$, $t_{n,i}$ decreasing.\\
							In case (1), since $\clos_i$ is closed we have $t_i \in \clos_i$. Thus, as $h$ is continuous, $L_i \, h (t_n) = h(t_n)$ converges to $h(t) = L_i \, h (t)$. \\
							Consider now the case (2). Observe that we have 
							\begin{equation} \label{eq:Lihtn}
								L_i h(t_n) =
								%h(t_{n,i}^-, t_{n, \sim i}) + \frac{t_{n,i} - t_{n,i}^-}{t_{n,i}^+ - t_{n,i}^-}( h(t_{n,i}^+, t_{n, \sim i}) - h(t_{n,i}^-, t_{n, \sim i})).
								[1 - \omega_+(t_{n,i})] h( t_{n,i}^-, t_{n, \sim i}) + \omega_+(t_{n,i}) h(t_{n,i}^+, t_{n, \sim i}).  
							\end{equation}
							Furthermore, as $t_{n,i}^-$ is increasing and bounded above by %smaller than 
							$t_i$, it converges to a limit $t_{\infty,i}^- \leq t_i$. 
							Consider the case (2a) where $t_i \in \clos_i$. 
							Then, by definition of $t_{n,i}^+$, we have $t_{n,i} \leq t_{n,i}^+ \leq t_i$. Hence $t_{n,i}^+$ converges to $t_i$. If, first, $t_{\infty,i}^- = t_i$, then from \eqref{eq:Lihtn}, $ L_i h(t_n)$ is a convex combinations of two values of $h$ at two inputs that converge to $t$ so $ L_i h(t_n)$ converges to $h(t) = L_i h(t)$:
							\begin{align*}
								& \vert L_i h(t_n) - h(t) \vert \\
								&  =   \left \vert [1 - \omega_+(t_{n,i})] ( h( t_{n,i}^-, t_{n, \sim i})- h(t) ) 
								+ \omega_+(t_{n,i}) (h(t_{n,i}^+, t_{n, \sim i}) - h(t) ) \right \vert \\
								& \leq  \max(\vert  h( t_{n,i}^-, t_{n, \sim i})- h(t) \vert, \vert h(t_{n,i}^+, t_{n, \sim i}) - h(t) \vert) 
								\underset{n \to +\infty}{\longrightarrow} 0.
							\end{align*}
							If, second, $t_{\infty,i}^- < t_i$, then 
							$\omega_+(t_{n,i}) \to 1$ and by \eqref{eq:Lihtn}, $ L_i h(t_n)$ converges to $h(t) = L_i h(t)$.\\ 
							Consider now the case (2b) where $t_i \not \in \clos_i$. 
							Then, since $[0,1] \backslash \clos_i$ is open in $[0,1]$, for $n$ large enough we have $t_{n,i}^- = t_{i}^-$ and 
							$t_{n,i}^+ = t_{i}^+$. 
							Thus, $\omega_+(t_{n,i}) \to \omega_+(t_i)$, and by \eqref{eq:Lihtn}, $L_i h(t_n) \to L_i h(t)$.
							This concludes the case (2). The case (3) is treated similarly.
						\end{itemize}
					\end{proof}

					%%%%%%%%%%%%%%%%%%%%%%%%%%%%%%%%%%%%%%%%%%%%%%%%%%%%%%%%%%%%%%%%%%%%%%%%%
					\begin{proof}[{\bf Proof of proposition \ref{prop:extension:properties}}]
						%%%%%%%%%%%%%%%%%%%%%%%%%%%%%%%%%%%%%%%%%%%%%%%%%%%%%%%%%%%%%%%%%%%%%%%%%
						{~}
						\begin{enumerate}
							%\item By construction, the affine extension of a continuous function is continuous. % on $[0, 1]$.
							%The result then follows directly by composition of continuous functions.
							\item First, the linearity of $\extfun$ comes from the linearity of $g \mapsto L_B g$,
							and by composition of affine maps.
							
							Now, let us prove that $\extfun$ is $1$-Lipschitz. By linearity, it is sufficient to show that for $f \in \mathcal{C}(\clos , \mathbb{R}),$
							\begin{equation} \label{eq:to:show:extension:lipschitz}
								\sup_{t \in \domain}
								| \extfun (f)(t) |
								\leq 
								\sup_{t \in \clos}
								|f(t) |.
							\end{equation}
							Notice that, with $g$ a univariate and continuous function on a closed subset $B$ 
							of $[0, 1]$ containing the boundaries $0$ and $1$,
							by construction, the values of the affine extension of $L_B(g)$ lie in the range of $g$ values. This implies that, for $f \in \mathcal{C}(\clos , \mathbb{R})$,
							\[
							\sup_{t \in \clos_1 \times \dots  \times \clos_{d-1} \times [0,1]}
							| L_d \, f(t) |
							\leq 
							\sup_{\substack{ t_{\sim d} \in \clos_1 \times \dots  \times \clos_{d-1} \\  u_d \in \clos_d }}
							|f (u_d , t_{\sim d}) |
							\leq 
							\sup_{t \in \clos}
							|f(t) |.
							\]
							Similarly, we have
							\begin{align*}
								\sup_{t \in \clos_1 \times \dots  \times \clos_{d-2} \times [0,1]^2}
								| L_{d-1} L_d \, f(t) |
								& \leq 
								\sup_{\substack{ t_{\sim (d - 1)} \in \clos_1 \times \dots  \times \clos_{d-2} \times [0,1] \\  u_{d-1} \in \clos_{d-1} }}
								| L_d \,f (u_{d-1} , t_{\sim (d - 1)}) |
								\\
								& \leq 
								\sup_{t \in \clos}
								|f(t) |.
							\end{align*}
							Hence, by iteration we show \eqref{eq:to:show:extension:lipschitz}.
							
							\item 
							Any $f \in E_\subdiv$ can be written as $Y_{\subdiv,\alpha}$ with $\alpha \in A_{\subdiv}$.
							Let us first consider the marginal extension of $Y_{\subdiv, \alpha \vert \clos}$ with respect to coordinate $d$: for $t=(t_1,\ldots,t_d) \in \clos_1 \times \dots \times \clos_{d-1} \times [0,1]$, we have
							$$ L_d  Y_{\subdiv, \alpha \vert \clos} (t) 
							= L_{\clos_d} [u_d \mapsto Y_{\subdiv, \alpha \vert \clos}(t_1, \dots, t_{d-1}, u_d)] (t_d).$$
							Observe that, when $t_1, \dots, t_{d-1}$ are fixed in $\clos_1 \times \dots \times \clos_{d-1}$, the univariate function 
							$$u_d \mapsto Y_{\subdiv, \alpha \vert \clos_1 \times \dots \times \clos_{d-1} \times [0,1]}(t_1, \dots, t_{d-1}, u_d)$$
							is piecewise linear, and all its knots are contained in $\clos_d$, by definition of $\clos$.
							Hence, when restricting it to $\clos_d$ and then reextending it to $[0, 1]$ with $L_{\clos_d}$, 
							one obtains exactly the same function.
							In other words: for $t=(t_1,\ldots,t_d) \in \clos_1 \times \dots \times \clos_{d-1} \times [0,1]$, we have
							$$ L_d  Y_{\subdiv, \alpha \vert \clos} (t) =
							Y_{\subdiv, \alpha \vert \clos_1 \times \dots \times \clos_{d-1} \times [0,1]}(t).$$
							By the same reasoning, we have, for $t=(t_1,\ldots,t_d) \in \clos_1 \times \dots \times \clos_{d-2} \times [0,1]^2$,
							$$ L_{d-1} L_d  Y_{\subdiv, \alpha \vert \clos} (t) =
							Y_{\subdiv, \alpha \vert \clos_1 \times \dots \times \clos_{d-2} \times [0,1]^2}(t).$$
							Hence by an immediate induction:
							$ \extfun  Y_{\subdiv, \alpha \vert \clos} (t) = Y_{\subdiv, \alpha}(t)$ for $t  \in \domain$.
						\end{enumerate}
					\end{proof}
					
					\begin{proof}[{\bf Proof of Corollary \ref{cor:vertice:to:hypercube}}]
						The functions $f$ and $g$ are $d$-affine on $\Delta$, thus they are polynomial functions and can be defined on $\domain$. 
						Consider $\clos = \prod_{j=1}^d \left( [0,1] \backslash (x_j^- , x_j^+) \right)$.
						One can simply show that $f$ satisfies the conditions for $\extfun (f)$ in Proposition \ref{prop:multiaffine:extension},  then by unicity $f = \extfun(f)$. Similarly $g = \extfun (g)$. Finally, from \eqref{eq:multiExtFormula}, $ \extfun (g) = \extfun(f)$ since $f$ and $g$ coincide on the $2^d$ vertices of $\Delta$. 
					\end{proof}
					
					\begin{proof}[{\bf Proof of Lemma \ref{lem:non-empty:contains}}]
						Observe that $\strictineqset = \mathrm{int}_{|| . ||_{\infty}} ( \ineqset )$.
						Let us consider $g \in \Hilb \cap \mathrm{int}_{||.||_{\infty}} (\ineqset)$. 
						Then $g \in \Hilb \cap \ineqset$. Since $k$ is continuous and defined on the compact set $\domain \times \domain$, there exists a constant $C_{\sup}$ such that for all $h \in \mathcal{C}(\domain , \mathbb{R})$, $||h||_{\infty} \leq C_{\sup} ||h ||_{\Hilb}$ (see for instance Lemma 2 in \cite{bay2016generalization}). Let $\epsilon >0$ such that $||g-h||_{\infty} \leq \epsilon$ implies $h \in \ineqset$. Then, for all $h \in \Hilb$ such that $|| g - h ||_{\Hilb} \leq \epsilon / C_{\sup}$, we have $h \in \ineqset$. This means that $g$ is in $\mathrm{int}_{||.||_{\Hilb}} (\Hilb \cap \ineqset)$.
					\end{proof}
					
					\begin{lemma} \label{lem:LB:to:increasing:function}
						Let $B$ be a closed subset of $[0,1]$ containing $0$ and $1$. Let $f_B  \in \mathcal{C}(B,\mathbb{R})$ be non-decreasing. Then $L_B f_{B}$ is non-decreasing from $[0,1] \to \mathbb{R}$.
					\end{lemma}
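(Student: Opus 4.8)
The plan is to work directly with the explicit form of the affine extension in \eqref{eq:piecewiseLinearExtension}. Write $h = L_B f_B$. The first step I would record is an elementary sandwich bound: for every $x \in [0,1]$, the value $h(x)$ is a convex combination of $f_B(x^-)$ and $f_B(x^+)$, where $x^- = \max\{u \in B; u \leq x\}$ and $x^+ = \min\{u \in B; u \geq x\}$ satisfy $x^- \leq x \leq x^+$ and belong to $B$ (this covers the case $x \in B$ as well, where $x^- = x^+ = x$). Since $f_B$ is non-decreasing and $x^- \leq x^+$, the weights being nonnegative and summing to one gives
\[
f_B(x^-) \leq h(x) \leq f_B(x^+).
\]

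Next I would fix $s,t \in [0,1]$ with $s < t$ and split into two cases according to whether $B$ meets the closed interval $[s,t]$. If $B \cap [s,t] \neq \emptyset$, pick any $b \in B \cap [s,t]$. Then $b \geq s$ with $b \in B$ forces $s^+ \leq b$, while $b \leq t$ with $b \in B$ forces $b \leq t^-$; applying the sandwich bound at $s$ and at $t$ and using monotonicity of $f_B$ yields
\[
h(s) \leq f_B(s^+) \leq f_B(b) \leq f_B(t^-) \leq h(t).
\]

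The remaining case $B \cap [s,t] = \emptyset$ is the only step I expect to require genuine care, and it is purely a matter of bookkeeping with the neighbours $s^\pm, t^\pm$. Here $s \notin B$, so $a := s^- < s$, and since no point of $B$ lies in $[s,t]$ we have $c := s^+ > t$; moreover $(a,c) \cap B = \emptyset$ by definition of $s^\pm$, and $[s,t] \subseteq (a,c)$. From $(a,c) \cap B = \emptyset$ one checks that $t^- = a$ and $t^+ = c$ as well, so $s,t$ lie in a common connected component $(a,c)$ of $[0,1]\setminus B$ with $a,c \in B$ and $a<c$. On $(a,c)$ the extension $h$ is the affine interpolant of the points $(a, f_B(a))$ and $(c, f_B(c))$, whose slope $(f_B(c) - f_B(a))/(c-a)$ is nonnegative because $f_B(a) \leq f_B(c)$; hence $h(s) \leq h(t)$. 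This exhausts both cases and proves that $h = L_B f_B$ is non-decreasing. The main obstacle, as noted, is verifying that $s^\pm$ and $t^\pm$ coincide when no knot of $B$ separates $s$ from $t$, which is exactly what guarantees that $h$ is affine with nonnegative slope on the relevant subinterval.
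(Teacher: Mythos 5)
Your proof is correct and follows essentially the same route as the paper's: the sandwich bound $f_B(x^-)\leq L_Bf_B(x)\leq f_B(x^+)$ corresponds to the paper's observation that $L$ is affine on $[v^-,v^+]$ with $L(v^-)\leq L(v^+)$, and your case split on whether $B$ meets $[s,t]$ is equivalent to the paper's split on $v^+\leq w^-$ versus $v^+>w^-$, with identical arguments in each case.
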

					\begin{proof}[{\bf Proof of Lemma \ref{lem:LB:to:increasing:function}}]
						Write $L = L_B f_{B}$.
						Let $0 \leq v < w \leq 1$. Since $L$ is affine on $[v^- , v^+]$ and $L(v^-) = f_B(v^-) \leq f_B(v^+) = L(v^+)$, we have $L(v^-) \leq L(v) \leq L(v^+)$. Similarly  $L(w^-) \leq L(w) \leq L(w^+)$. Hence if (1) $v^+ \leq w^-$, then $L(v) \leq L(w)$. If (2) $v^+ > w^-$, then one can show that $[v,w] \cap B = \varnothing$ and thus $v^- = w^-$ and $v^+ = w^+$. Then $L$ is affine on $[v^- , v^+]$ with $L(v^-) \leq L(v^+) $ and  $v , w \in [v^- , v^+]$. Hence $L(v) \leq L(w)$. Hence $L$ is increasing from $[0,1] \to \mathbb{R}$.
					\end{proof}

					In the next lemma, for $B \subseteq \mathbb{R}$, we assume that $f_B : B \to \mathbb{R}$ is convex, i.e. for all $x , y\in B$, $\lambda \in [0,1]$ with $\lambda x + (1 - \lambda) y \in B$, we have
					\begin{equation} \label{eq:convex:on:B}
						f_B( \lambda x + (1 - \lambda) y) 
						\leq 
						\lambda f_B(x) + (1 - \lambda) f_B(y).
					\end{equation}

					\begin{lemma} \label{lem:LB:to:convex:function}
						Let $B$ be a closed subset of $[0,1]$ containing $0$ and $1$. Let $f_B  \in \mathcal{C}(B,\mathbb{R})$ be convex. Then $L_B f_{B}$ is convex from $[0,1] \to \mathbb{R}$.
					\end{lemma}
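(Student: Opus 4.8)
The plan is to show that $L := L_B f_B$ admits, at every interior point, an affine minorant touching its graph, so that $L$ is a pointwise supremum of affine functions and hence convex. Throughout I use two structural facts about $L$ coming from \eqref{eq:piecewiseLinearExtension}: $L$ is continuous (as established in the proof of Proposition~\ref{prop:multiaffine:extension}), it equals $f_B$ on $B$, and on each connected component $(a,b)$ of the open set $[0,1]\setminus B$ (with $a,b \in B$ since $B$ is closed) it is the affine chord joining $(a, f_B(a))$ to $(b, f_B(b))$.

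The crucial consequence of the hypothesis is the monotonicity of secant slopes of $f_B$ on $B$: for $p<q<r$ in $B$, the point $q$ is a convex combination of $p$ and $r$ lying in $B$, so \eqref{eq:convex:on:B} applies and yields $\frac{f_B(q)-f_B(p)}{q-p} \le \frac{f_B(r)-f_B(q)}{r-q}$. I would first record this, together with two elementary corollaries. First, for a gap $(a,b)$ every graph point $(c,f_B(c))$ with $c\in B$ lies on or above the line through $(a,f_B(a))$ and $(b,f_B(b))$, by applying the slope inequality to $a<b<c$ or to $c<a<b$. Second, for an interior point $t_0 \in B\cap(0,1)$ the left slope $\sup_{p\in B, p<t_0}\frac{f_B(t_0)-f_B(p)}{t_0-p}$ and the right slope $\inf_{q\in B, q>t_0}\frac{f_B(q)-f_B(t_0)}{q-t_0}$ are finite, since $0,1\in B$ make both index sets nonempty, and the former is $\le$ the latter.

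Next I construct the supporting line at each interior point $t_0\in(0,1)$. If $t_0\in B$, I pick any slope $c$ between the left and right slopes and set $\ell(u)=f_B(t_0)+c(u-t_0)$; the slope comparison gives $\ell \le f_B$ on $B$. If $t_0$ lies in a gap $(a,b)$, I take $\ell$ to be the chord line itself, which satisfies $\ell\le f_B$ on $B$ by the first corollary. In either case $\ell(t_0)=L(t_0)$ and $\ell\le f_B=L$ on $B$. To upgrade this to $\ell\le L$ on all of $[0,1]$, I use that on each gap $(a,b)$ both $\ell$ and $L$ are affine with $\ell(a)\le L(a)$ and $\ell(b)\le L(b)$, so $L-\ell$ is affine and nonnegative at the endpoints, hence $\ell\le L$ on $[a,b]$. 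Thus $L=\sup_{t_0\in(0,1)}\ell_{t_0}$ pointwise on $(0,1)$, so $L$ is convex on $(0,1)$, and convexity extends to $[0,1]$ by continuity.

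The main obstacle is the generality of $B$: it may have accumulation points and a complicated (e.g.\ Cantor-type) gap structure, so one cannot argue by a finite or inductive gap-by-gap comparison as one might for a finite subdivision. This forces the global, supporting-line formulation, whose two pillars are the secant monotonicity of $f_B$ on $B$ --- exactly where the hypothesis \eqref{eq:convex:on:B} enters, and where closedness of $B$ (ensuring gap endpoints lie in $B$ and that convex-combination midpoints stay in $B$) is essential --- and the finiteness of the one-sided slopes, which I secure by restricting the construction to interior points $t_0\in(0,1)$ and recovering the endpoints by continuity.
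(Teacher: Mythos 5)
Your proof is correct, but it follows a genuinely different route from the paper's. The paper proves the identity $\hull(\epi(f_B)) = \epi(L_B\, f_B)$: one inclusion is purely geometric (Thal\`es-type argument on the chords over the gaps) and the other, which is where convexity of $f_B$ enters, invokes Carath\'eodory's theorem and the Krein--Milman theorem to reduce an arbitrary point of the convex hull to convex combinations of points with abscissas in $\{t^-,t^+\}$; convexity of $L_B f_B$ then follows because its epigraph is a convex hull, hence convex. You instead exhibit, at each interior point $t_0$, an affine minorant of $L_B f_B$ touching its graph at $t_0$ --- built from the monotonicity of secant slopes of $f_B$ on $B$ when $t_0\in B$, and from the chord itself when $t_0$ lies in a gap --- extend the minorization from $B$ to $[0,1]$ by comparing two affine functions at the endpoints of each gap, and conclude that $L_B f_B$ is a pointwise supremum of affine functions. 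Your argument is more elementary (no Carath\'eodory, no Krein--Milman) and handles the possibly Cantor-like structure of $B$ just as cleanly, since the supporting-line construction is local and never enumerates gaps; its only delicate points, the finiteness of the one-sided slopes and the passage from $(0,1)$ to $[0,1]$, are correctly secured by $0,1\in B$ and by continuity of $L_B f_B$. What the paper's route buys in exchange is the structural identity between the convex hull of $\epi(f_B)$ and $\epi(L_B f_B)$, which is a slightly stronger statement than convexity alone and makes the role of the affine extension as a ``convexification over the gaps'' explicit.
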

					
					\begin{proof}[{\bf Proof of Lemma \ref{lem:LB:to:convex:function}}]
						For a function $g$ defined on a domain $D \subseteq \R$, we recall the definition of the epigraph of $f$, 
						denoted $\epi(f)$:
						$$\epi(f) := \{(t,y) \in D \times \R \text{ s.t. } y \geq f(t) \}.$$
						We also recall that the convex hull of a set $A \subseteq \R^2$, denoted $\hull(A)$, is the smallest convex set containing $A$. Equivalently, it is the set of convex combinations of points in $A$ (see e.g. \cite{eggleston_1958}).
						Then, we are going to prove that, \textit{if $f_B$ is convex, then the convex hull of the epigraph of $f_B$ is the epigraph of its affine extension $L_B f_B$}: 
						$$\hull(\epi(f_B)) = \epi(L_B f_B).$$
						The result will then be deduced since it shows that $\epi(L_B f_B)$ is convex, which is equivalent to the convexity of $L_B f_B$ (as $L_B f_B$ is defined on the interval $[0, 1]$). Let us now come back to the proof.
						
						\begin{itemize}
							\item First let us show that (and even if $f_B$ is not convex)  $\epi(L_B f_B) \subseteq \hull(\epi(f_B))$.\\
							Let $(t,y) \in \epi(L_B f_B)$.
							\begin{itemize}
								\item If $t \in B$, then $y \geq L_B f_B(t) = f_B(t)$. Thus $(t, y) \in \epi(f_B) \subseteq \hull(\epi(f_B))$.
								\item If $t \notin B$, then $t^- < t < t^+$. Consider the straight line joigning $(t^-, f_B(t^-))$
								and $(t, y)$. Let $(t^+, y^+)$ the point on that straight line with abscissa $t^+$.
								Notice that $L_B f_B$ is a straight line on $[t^-, t^+]$, joining $(t^-, f_B(t^-))$ and $(t^+, f_B(t^+))$.
								Then, by Thal\`es theorem, the sign of $y^+ - f_B(t^+)$ is the same as $y - L_B f_B(t)$,
								which is positive because $(t,y) \in \epi(L_B f_B)$. Thus $(t^+, y^+) \in \epi(f_B)$. 
								Finally, we have shown that $(t,y)$ belongs to a line segment whose endpoints $(t^-, f_B(t^-)), (t^+, y^+)$ 
								belong to $\epi(f_B)$, which proves that $(t,y) \in \hull(\epi(f_B))$.
							\end{itemize}
							\item Conversely, let us prove that $\hull(\epi(f_B)) \subseteq \epi(L_B f_B)$. Here the convexity of $f_B$ is required.\\
							Let $(t,y) \in \hull(\epi(f_B))$.
							Thus $(t, y)$ belongs to a polygon $P$, which is either a singleton, a segment or a triangle and whose vertices $(t_1, y_1), \dots, (t_m, y_m)$ are in $\epi(f_B)$, with $m \in \{1 ,2 , 3 \}$, from Carath\'eodory's theorem.
							The intersection of $P$ with the band $[t^-, t^+] \times \R$ is a convex polygon containing $(t,y)$.
							Thus, from Krein–Milman theorem, $(t,y)$ is a convex combination of its extremal points $(u_1, z_1), \dots, (u_r, z_r)$.
							By definition of $t^-, t^+$, the points $t_1, \dots, t_m \notin (t^-, t^+)$. Then, one can see that these extremal points have abscissas  in $\{t^- , t^+ \}$ and either belong to $\{ (t_1, y_1), \dots, (t_m, y_m) \}$ or are in the segments with endpoints in $\{ (t_1, y_1), \dots, (t_m, y_m) \}$.
							
							Let us now prove that these extremal points are in $\epi(f_B)$. Consider for instance $(u_1, z_1)$.
							\begin{itemize}
								\item If $(u_1, z_1)$ is in $\{ (t_1, y_1), \dots, (t_m, y_m) \}$, it belongs to $\epi(f_B)$.
								\item Otherwise, assume without loss of generality that $(u_1, z_1)$ is the intersection of the segment joining $(t_1, y_1), (t_2, y_2)$ and $\{t^- \} \times \R$, with $t_1 \leq t^- \leq t_2$.
								Then, there exists $\lambda \in [0,1]$ such that $(u_1,z_1) = \lambda (t_1,y_1) + (1 - \lambda) (t_2,y_2)$. We have, by using that $(t_1,y_2) , (t_2,y_2) \in \epi(f_B)$ and by convexity of $f_B$,
								\begin{align*}
									z_1 & = \lambda y_1 + (1 - \lambda ) y_2
									\\
									& \geq \lambda f_B(t_1) + (1 - \lambda ) f_B(t_2)
									\\
									& \geq f_B( \lambda t_1 + (1 - \lambda ) t_2)
									\\
									& = f_B(u_1).
								\end{align*}
								Hence $(u_1, z_1)$ belongs to $\epi(f_B)$.
							\end{itemize}
							Now, $L_B f_B$ is affine on $[t^-, t^+]$. Recall that $u_1, \dots, u_r$ are elements of $\{t^-, t^+\}$, and thus are in $B$. Hence $L_B f_B$ coincides with $f_B$ at $u_1, \dots, u_m$.
							Writing that $(t,y)$ is a convex combination of $(u_1, z_1), \dots, (u_r, z_r)$, and using that $(u_1, z_1), \dots, (u_r, z_r) \in \epi(f_B)$, we obtain:
							$$y = \sum_{i=1}^r \lambda_i z_i \geq \sum_{i=1}^r \lambda_i f_B(u_i) 
							=  \sum_{i=1}^r \lambda_i L_B f_B(u_i) =
							L_B f_B \left( \sum_{i=1}^r \lambda_i u_i \right)
							=
							L_B f_B(t).$$
							This proves that $(t,y) \in \epi(L_B f_B)$.
						\end{itemize}
						
					\end{proof}

					\begin{proof}[{\bf Proof of Lemma \ref{lem:cond:PfC:subset:C}}]
						{\bf Consider that $\ineqset$ is given by \eqref{eq:boundedness}.} 
						Let $f \in \ineqset$, with $f_{|\clos}$ the restriction of $f$ to $\clos$.
						From \eqref{eq:multiExtFormula}, for $x = (x_1,\ldots,x_d) \in \domain$, the value of $P_{\clos \to \domain} (f_{|F}) (x)$ is a convex combination of the values $(f( x_1^{\epsilon_1} , \ldots , x_d^{\epsilon_d} ))_{\epsilon_1,\ldots,\epsilon_d \in \{-,+\}}$. These $2^d$ values are in $[a,b]$ and thus $P_{\clos \to \domain} (f_{|F}) (x)$ is also in $[a,b]$. Thus $P_{\clos \to \domain} (f_{|F}) \in \ineqset$.
						
						{\bf Consider that $\ineqset$ is given by \eqref{eq:monotonicity}.}
						Let $i \in \{1,\ldots,d\}$. Let $x \in \domain$ and $0 \leq u_i < v_i \leq 1$. From \eqref{eq:multiExtFormula}, we have, with the notation $x_{\sim i}^{\epsilon} = (x_{1}^{\epsilon_1},\ldots,x_{i-1}^{\epsilon_{i-1}},x_{i+1}^{\epsilon_{i+1}},\ldots,x_{d}^{\epsilon_d})$,
						\begin{align}   
							& \extfun (f_{| \clos}) (v_i,x_{\sim i})
							-
							\extfun (f_{| \clos}) (u_i,x_{\sim i})
							\notag \\
							= & \sum_{\epsilon_1, \dots, \epsilon_d \in \{-, +\}} 
							\left(w_{\epsilon_i}( v_i ) \prod_{\substack{j = 1 \\j \neq i }}^d \omega_{\epsilon_j}(x_j)  \right)
							%f(x_1^{\epsilon_1}, \dots,x_{i-1}^{\epsilon_{i-1}},v_i^{\epsilon_i},x_{i+1}^{\epsilon_{i+1}}, \ldots ,  x_d^{\epsilon_d})
							f(v_i^{\epsilon_i},  x_{\sim i}^{\epsilon})
							\notag 
							\\
							& -
							\sum_{\epsilon_1, \dots, \epsilon_d \in \{-, +\}} 
							\left(w_{\epsilon_i}( u_i ) \prod_{\substack{j = 1 \\j \neq i }}^d \omega_{\epsilon_j}(x_j)  \right)
							%f(x_1^{\epsilon_1}, \dots,x_{i-1}^{\epsilon_{i-1}},u_i^{\epsilon_i},x_{i+1}^{\epsilon_{i+1}}, \ldots ,  x_d^{\epsilon_d})
							f(u_i^{\epsilon_i},  x_{\sim i}^{\epsilon})
							\notag \\
							= &
							%   \sum_{\epsilon_1, \dots, \epsilon_{i-1} , \epsilon_{i+1} , \ldots , \epsilon_d \in \{-, +\}}
							\sum_{ \epsilon_{\sim i} \in \{-, +\}^{d-1}}
							\left( \prod_{\substack{j = 1 \\j \neq i }}^d \omega_{\epsilon_j}(x_j) 
							\notag  
							\Bigg(
							\sum_{\epsilon_i \in \{-,+\}}
							w_{\epsilon_i}( v_i )
							f(v_i^{\epsilon_i},  x_{\sim i}^{\epsilon})
							%    f(x_1^{\epsilon_1}, \dots,x_{i-1}^{\epsilon_{i-1}},v_i^{\epsilon_i},x_{i+1}^{\epsilon_{i+1}}, \ldots ,  x_d^{\epsilon_d})
							-
							\sum_{\epsilon_i \in \{-,+\}}
							w_{\epsilon_i}( u_i )
							f(u_i^{\epsilon_i},  x_{\sim i}^{\epsilon})
							%    f(x_1^{\epsilon_1}, \dots,x_{i-1}^{\epsilon_{i-1}},u_i^{\epsilon_i},x_{i+1}^{\epsilon_{i+1}}, \ldots ,  x_d^{\epsilon_d})
							\Bigg)
							\notag \right)  \\
							= & %    \sum_{\epsilon_1, \dots, \epsilon_{i-1} , \epsilon_{i+1} , \ldots , \epsilon_d \in \{-, +\}}
							\sum_{\epsilon_{\sim i} \in \{-, +\}^{d-1}}
							\left(\prod_{\substack{j = 1 \\j \neq i }}^d \omega_{\epsilon_j}(x_j) %\right)
							%\label{eq:for:monotonicity:weighted:sum:one}    
							%\\    & 
							\left(
							%    L_{\clos_i}  f_{| \clos}(x_1^{\epsilon_1}, \dots,x_{i-1}^{\epsilon_{i-1}}, \cdot ,x_{i+1}^{\epsilon_{i+1}}, \ldots ,  x_d^{\epsilon_d}) ( v_i )
							L_{\clos_i}  f_{| \clos}(\cdot,  x_{\sim i}^{\epsilon})( v_i )
							-
							%      L_{\clos_i}  f_{| \clos}(x_1^{\epsilon_1}, \dots,x_{i-1}^{\epsilon_{i-1}}, \cdot ,x_{i+1}^{\epsilon_{i+1}}, \ldots ,  x_d^{\epsilon_d}) ( u_i )
							L_{\clos_i}  f_{| \clos}(\cdot,  x_{\sim i}^{\epsilon})( u_i )
							\right ) \right). \label{eq:for:monotonicity:weighted:sum:two}
						\end{align}
						In the above display, the function 
						%$f_{| \clos}(x_1^{\epsilon_1}, \dots,x_{i-1}^{\epsilon_{i-1}}, \cdot ,x_{i+1}^{\epsilon_{i+1}}, \ldots ,  x_d^{\epsilon_d})$ 
						$f_{| \clos}(\cdot, x_{\sim i}^{\epsilon})$
						is continuous non-decreasing from $\clos_i$ to $\mathbb{R}$. Hence, from Lemma \ref{lem:LB:to:increasing:function}, the function
						%\[
						%t \in [0,1] \mapsto  L_{\clos_i}  f_{| \clos}(x_1^{\epsilon_1}, \dots,x_{i-1}^{\epsilon_{i-1}}, \cdot ,x_{i+1}^{\epsilon_{i+1}}, \ldots ,  x_d^{\epsilon_d}) ( t )
						%\] 
						$$t \in [0,1] \mapsto  L_{\clos_i}  f_{| \clos}(\cdot, x_{\sim i}^{\epsilon}) ( t )$$
						is non-decreasing. Hence, the difference in \eqref{eq:for:monotonicity:weighted:sum:two} is non-negative and the weighted sum 
						%in \eqref{eq:for:monotonicity:weighted:sum:one}
						is non-negative since the weights are non-negative. This concludes the proof.

						{\bf Consider that $\ineqset$ is given by \eqref{eq:convexity}.} Then the proof is identical to the case where $\ineqset$ is given by \eqref{eq:monotonicity}. Instead of using Lemma \ref{lem:LB:to:increasing:function} we use Lemma \ref{lem:LB:to:convex:function}. 
						For $i \in \{1,\ldots,d\}$, $x \in \domain$ and $0 \leq u_i < v_i < w_i \leq 1$, instead of considering the first order finite difference
						\[
						\extfun (f_{| \clos}) (v_i,x_{\sim i})
						-
						\extfun (f_{| \clos}) (u_i,x_{\sim i}),
						\]
						we consider the second order finite difference
						\begin{align*}
							& 
							\frac{ 
								\extfun (f_{| \clos}) (w_i,x_{\sim i})
								-
								\extfun (f_{| \clos}) (v_i,x_{\sim i})
							}{
								w_i - v_i
							}
							\\
							& -
							\frac{ 
								\extfun (f_{| \clos}) (v_i,x_{\sim i})
								-
								\extfun (f_{| \clos}) (u_i,x_{\sim i})
							}{
								v_i - u_i
							}.
						\end{align*}
						
					\end{proof}
					
					%%%%%%%%%%%%%%%%%%%%%%%%%%%%%%%%%%%%%%%%%%%%%%%%%%%%%%%%%%%%%%%%%%%%%%%%%%%%%%%% 
					\begin{proof}[{\bf Proof of Theorem \ref{theorem:extension:convergence:spline}}]
						%%%%%%%%%%%%%%%%%%%%%%%%%%%%%%%%%%%%%%%%%%%%%%%%%%%%%%%%%%%%%%%%%%%%%%%%%%%%%%%% 
						To prove the uniform convergence, we cannot directly apply Theorem \ref{theorem:convergence:spline}
						because the set of d-dimensional knots is not assumed to be dense in $[0, 1]^d$.
						The idea is to match this situation, by considering functions defined on their closure $\clos$ and by using the multiaffine extension $P_{\clos \to \domain}$.
						
						We will thus find functions and sets corresponding to $\widehat{Y}_{\subdiv^{(m)}}$, $k$, $\Hilb$, $\ineqset$ and $\interpset_{X,y^{(n)}}$ in Theorem \ref{theorem:convergence:spline}. We will then show that conditions corresponding to those of Theorem \ref{theorem:convergence:spline} hold. Under these new conditions, the proof of Theorem \ref{theorem:convergence:spline} can be repeated, enabling us to obtain the conclusion of Theorem \ref{theorem:extension:convergence:spline}.
						
						\indent 
						To $\widehat{Y}_{\subdiv^{(m)}}$, we thus associate  $\widehat{Y}_{\subdiv^{(m)} \vert \clos}$, its restriction to $F$. The input space of $\widehat{Y}_{\subdiv^{(m)}}$ is $\domain$  (written $X$ in \cite{bay2017new}) while the input space of $\widehat{Y}_{\subdiv^{(m)} \vert \clos}$ is $\clos$. To $k$ we associate $k_{\clos}$.
						To $\interpset_{X,y^{(n)}}$ (the set of equality constraints, written $I$ in \cite{bay2017new}) we associate $\interpset_{\clos,X,y^{(n)}}$. To $\ineqset$ (the set of inequality constraints) we associate $\ineqset_\clos$. To $\phi_{\multi }^{(\subdiv^{(m)})}$, $\multi \in \mathcal{L}_{\subdiv^{(m)}}$ (the basis functions) we associate their restriction to $\clos$, $\phi_{\multi |\clos}^{(\subdiv^{(m)})}$.
						Then notice that  $\widehat{Y}_{\subdiv^{(m)} \vert \clos}$ indeed corresponds to the mode function $\widehat{Y}_{\subdiv^{(m)}}$ of Theorem \ref{theorem:convergence:spline}, in the sense that we have, for $t \in \clos$,
						\begin{equation*}
							\widehat{Y}_{\subdiv^{(m)} \vert \clos} = \sum_{\multi \in \mathcal{L}_{\subdiv^{(m)}}} (\widehat{\alpha}_{\subdiv^{(m)}})_{\multi} \phi_{\multi | \clos}^{(\subdiv^{(m)})}(t)
						\end{equation*}
						and
						\begin{align} 
							\widehat{\alpha}_{\subdiv^{(m)}}
							&
							\in \underset{ \substack{\alpha \in A_{\subdiv^{(m)}}
									\\
									Y_{\subdiv^{(m)},\alpha} \, \in \, \interpset_{X,y^{(n)}} \, \cap \, \ineqset
							} }{\mathrm{argmin}}
							\alpha^\top
							k(\subdiv^{(m)},\subdiv^{(m)})^{-1}
							\alpha \notag
							\\
							& = \underset{ \substack{\alpha \in A_{\subdiv^{(m)}}
									\\
									Y_{\subdiv^{(m)},\alpha \vert \clos} \, \in \,  \interpset_{\clos,X,y^{(n)}} \, \cap \, \ineqset_\clos
							} }{\mathrm{argmin}}
							\alpha^\top
							k_\clos(\subdiv^{(m)},\subdiv^{(m)})^{-1}
							\alpha. \label{eq:for:apply:bayetal}
						\end{align}
						Above, we have used that $ \extfun Y_{\subdiv^{(m)},\alpha \vert \clos} = Y_{\subdiv^{(m)},\alpha}$, from Proposition \ref{prop:extension:properties}. We finally remark that, by construction, $\clos$ contains all the knots $t^{(\subdiv^{(m)})}_{\multi}$, $\multi \in \mathcal{L}_{\subdiv^{(m)}}$ obtained from $\subdiv^{(m)}$.

						Then, let us prove that the following assumptions, corresponding to those required for Theorem~\ref{theorem:convergence:spline}, are satisfied.
						Notice that $\ineqset_\clos$ is convex, since the map $P_{\clos \to \domain}$ is affine and $\ineqset$ is convex.
						Then, we will show that the following extensions to $(H1)$ and $(H2)$ hold:
						\begin{itemize}
							\item[$(H1,\clos)$] {\it $\mathrm{int}_{||.||_{\Hilb_\clos}}(\Hilb_\clos \cap \ineqset_\clos) \cap \interpset_{\clos,X,y^{(n)}} \neq \emptyset$.}  
							\item[$(H2,\clos)$] {\it $\pi_{\subdiv^{(m)} \vert \clos}(\ineqset_\clos) \subseteq \ineqset_\clos$
								%where $\pi_{\subdiv^{(m)} \vert \clos}(f) =
								%\sum_{\multi \in \mathcal{L}_{\subdiv^{(m)}}} f(t^{(\subdiv^{(m)})}_{\multi}) \phi_{\multi |\clos}^{(\subdiv^{(m)})}
								% $ for $f \in \mathcal{F}(\clos , \mathbb{R})$. 
							}
						\end{itemize}
						
						Let us first prove $(H2,\clos)$. 
						Let $f_\clos \in \ineqset_\clos$. Thus $f = \extfun(f_\clos) \in \ineqset$.
						Therefore  $\pi_{\subdiv^{(m)}}(f) \in \ineqset$ from Condition \ref{cond:piSC:subset:C}.
						Now $\pi_{\subdiv^{(m)} \vert \clos}(f_\clos)$ is the restriction to $\clos$ of the function $\pi_{\subdiv^{(m)}}(f)$,
						since $f = f_\clos$ on $\clos$ (by definition of $\extfun$).
						Finally, by Proposition~\ref{prop:extension:properties}, 
						$$ \extfun \left(\pi_{\subdiv^{(m)} \vert \clos}(f_\clos) \right) 
						= \extfun \left( \left( \pi_{\subdiv^{(m)}}(f) \right)_{\vert \clos} \right) = \pi_{\subdiv^{(m)}}(f) \in \ineqset.$$
						Hence $\pi_{\subdiv^{(m)} \vert \clos}(f_\clos) \in \ineqset_{\clos}$ which shows $(H2,\clos)$.
						
						Let us now prove $(H1,\clos)$. For $i=1,\ldots,n$ let us write $x^{(i)} = (x^{(i)}_1,\ldots,x^{(i)}_d)$.
						For $j=1,\ldots,d$, write $x^{(i)}_{j,-} = \max \{ u \in \clos_j; u \leq x^{(i)}_{j}  \}$ 
						and $x^{(i)}_{j,+} = \min \{ u \in \clos_j; u \geq x^{(i)}_{j}  \}$. The set 
						\[
						\{  ( x^{(i)}_{1,\epsilon_1} , \ldots , x^{(i)}_{d,\epsilon_d} ) \}_{\epsilon_1,\ldots,\epsilon_d \in \{ - , +\} , i=1,\ldots,n}
						\]
						can be written as $\{ w_1,\ldots,w_p\}$ with $w_1,\ldots,w_p$ two by two distinct.
						
						From Condition \ref{cond:non:empty}, for each $m  \geq 0$
						we have
						\begin{equation}
							\varnothing
							\neq 
							\{ \alpha \in A_{\subdiv^{(m_0)}};
							Y_{\subdiv^{(m_0)},\alpha} \in  \interpset_{X,y^{(n)}} \cap \strictineqset
							\}
							\subseteq 
							\{ \alpha \in A_{\subdiv^{(m)}};
							Y_{\subdiv^{(m)},\alpha} \in \interpset_{X,y^{(n)}} \cap \strictineqset
							\label{eq:in:proof:convergence:the:set}
							\},
						\end{equation}
						
						because the sequence of function spaces $\{ Y_{\subdiv^{(m)},\alpha} ; \alpha \in A_{\subdiv^{(m)}} \}_{m \geq m_0}$ is nested.
						Hence, we can take $\alpha$ in the set in \eqref{eq:in:proof:convergence:the:set}.
						Write, for $i=1,\ldots,p$, $z_i = Y_{\subdiv^{(m)},\alpha}(w_i)$. 
						
						Then from Condition \ref{cond:interior:non:empty}, the set 
						$\mathrm{int}_{||.||_{\Hilb}}(\Hilb \cap \ineqset) \cap \interpset_{W,z^{(n)}}$ is non-empty. 
						For $f$ in this set, let us show that $f_{\vert \clos}$ 
						belongs to $\mathrm{int}_{||.||_{\Hilb_\clos}}(\Hilb_\clos \cap \ineqset_\clos) \cap \interpset_{\clos,X,y^{(n)}}$.
						Thus, this set will be non-empty, what is to prove.
						
						Firstly, let us check that $f_{\vert \clos} \in \interpset_{\clos,X,y^{(n)}}$. 
						Indeed, on each of the $n$ hypercubes 
						\[
						\prod_{j=1,\ldots,d} [ x^{(i)}_{j,-}, x^{(i)}_{j,+} ],
						\]
						$i=1,\ldots,n$, $P_{\clos \to \domain}(f_{\vert \clos})$ coincide with $Y_{\subdiv^{(m)},\alpha} $. Indeed, consider one of these hypercubes. 
						The $2^d$ vertices of this hypercube belong to $\clos$, so on these $2^d$ points, $P_{\clos \to \domain}(f_{\vert \clos})$ coincide with $f_{\vert \clos}$ which coincide with $f$ which coincide with $Y_{\subdiv^{(m)},\alpha}$.
						Furthermore, the two functions $P_{\clos \to \domain}(f_{\vert \clos})$ and $Y_{\subdiv^{(m)},\alpha}$ are $d$-affine on this hypercube, and we have shown that they take the same values on the $2^d$ vertices. Thus they are equal on the hypercube from Corollary \ref{cor:vertice:to:hypercube}.
						Hence in particular $P_{\clos \to \domain}(f_{\vert \clos})\left(x^{(i)}\right) =  Y_{\subdiv^{(m)},\alpha}\left( x^{(i)} \right) = y_i$ and thus $P_{\clos \to \domain}(f_{\vert \clos}) \in \interpset_{X,y^{(n)}}$ and so 
						$f_{\vert \clos} \in \interpset_{\clos,X,y^{(n)}}$. Note that the above argumentation still goes through in the case where there exist $i,j$'s such that $x^{(i)}_{j,-} =  x^{(i)}_{j,+}$.
						
						Secondly, notice that $f \in \Hilb \cap \ineqset$. By Theorem 6 in \cite{berlinet2011reproducing}, $\Hilb_\clos$ is formed by restrictions to $F$ of functions in $\Hilb$, and for all $f_F \in \Hilb_\clos$, we have 
						$\Vert f_F \Vert_{\Hilb_\clos} = \displaystyle
						\inf_{h \in \Hilb, h_{\vert \clos} = f_F} \Vert h \Vert$.
						Thus $f_{\vert \clos} \in \Hilb_\clos$. %(by Theorem 6 in \cite{berlinet2011reproducing})
						Moreover, $f_{\vert \clos} \in \ineqset_\clos$ by Condition \ref{cond:PfC:subset:C}. 
						%$f \in \ineqset$ implies $P_{\clos \to \domain} (f_{|F}) \in \ineqset$. 
						Furthermore, let $\epsilon>0$ be such that $g \in \Hilb \cap \ineqset$ 
						for all $g \in \Hilb$ such that $||g-f||_{\Hilb} \leq 2 \epsilon$ (recall that $f \in \mathrm{int}_{||.||_{\Hilb}}(\Hilb \cap \ineqset)$). 
						Now, let $g_\clos \in \Hilb_\clos$ with $||g_{\clos} - f_{\vert \clos}||_{\Hilb_\clos} \leq \epsilon$.
						By Theorem 6 in \cite{berlinet2011reproducing}, there exists $\psi \in \Hilb$ such that $\psi_{\vert \clos} = g_\clos - f_{| \clos}$,
						and 
						$\Vert \psi \Vert_\Hilb  \leq  \Vert g_\clos - f_{\vert \clos} \Vert_{\Hilb_\clos} + \epsilon \leq 2 \epsilon.$
						This implies that $f + \psi$ belongs to $\Hilb \cap \ineqset$. 
						This in turn implies that $(f + \psi)_{|\clos}$ belongs to $\Hilb_\clos \cap \ineqset_\clos$, 
						with the same arguments as above. Also, we have $(f + \psi)_{|\clos} = f_{|\clos} + g_\clos - f_{| \clos} = g_\clos$, so $ g_\clos \in \Hilb_\clos \cap \ineqset_\clos$.
						Hence, $f_{\vert \clos} \in \mathrm{int}_{||.||_{\Hilb_\clos}}(\Hilb_\clos \cap \ineqset_\clos)$.

						With the conditions $(H1,\clos)$, $(H2,\clos)$, 
						one can check that the proof of Theorem~\ref{theorem:convergence:spline} can be carried out, 
						similarly as in \cite{bay2017new}. There are just a few modifications, that we now explain.
						
						The main modification is that in \cite{bay2017new},  the statement of Lemma 1 and its proof need to be adapted to our context. The adapted lemma is Lemma \ref{lem:adapted:from:bay:et:al}, that we state and prove below.
						
						The second modification is that the reference \cite{bay2017new} considers functions indexed on $[0,1]$ while we consider functions indexed on $\clos$. This only entails straightforward changes, since we prove convergence or uniform convergence of functions on $\clos$, where the set of knots $S^{(m)}_1 \times \dots \times S^{(m)}_d$ is dense in $\clos$.
						
						The last modification is that in \cite{bay2017new}, the set of inequality constraints corresponding to $\interpset_{\clos,X,y^{(n)}}$ is of the form
						\begin{equation} \label{eq:old:form}
							\left \lbrace
							f : \domain \to \R; f\left(x^{(i)}\right) = y_i,i=1,\ldots,n
							\right \rbrace,
						\end{equation}
						while the set $\interpset_{\clos,X,y^{(n)}}$ in our case is of the form
						\begin{equation} \label{eq:new:form}
							\bigg\{
							f_{\vert \clos} : \clos \to \R; 
							\sum_{j=1}^q
							\lambda_{i,j} f(a_j)
							= y_i, i=1,\ldots,n
							\bigg\},
						\end{equation}
						where the fixed coefficients $(\lambda_{i,1} , \ldots ,  \lambda_{i,q})_{i=1,\ldots,n}$ and the points $a_{1}, \ldots , a_q \in \clos$ are explicited in the proof of Lemma \ref{lem:adapted:from:bay:et:al} and come from \eqref{eq:multiExtFormula}. 
						
						The difference between \eqref{eq:old:form} and \eqref{eq:new:form} changes the arguments in the proof of \cite{bay2017new} only in the first item after (13) there, the change being straightforward.
						%The conclusion of this proof is then that
						Thus, from this adaptation of the proof in \cite{bay2017new},
						$\widehat{Y}_{\subdiv^{(m)} \vert \clos}$ converges uniformly on $\clos$ to the function $Y_{\clos,\text{opt}}$
						defined in the text of Theorem~\ref{theorem:extension:convergence:spline}.
						
						By Proposition~\ref{prop:extension:properties}, this implies that
						$\widehat{Y}_{\subdiv^{(m)}}$ converges uniformly on $\domain$ to the function $\extfun(Y_{\clos,\text{opt}})$.
					\end{proof}

					For the next lemmas, notice that the definition of $\pi_{\subdiv^{(m)}}(f)$ in \eqref{eq:piSm} 
					for functions in $\mathcal{C}(\domain , \mathbb{R})$ can be extended for functions
					$f \in \mathcal{C}(\clos , \mathbb{R})$.
					Indeed, it relies only on the sequence of knots $(t^{(\subdiv^{(m)})}_{\multi})_{\multi \in \mathcal{L}_{\subdiv^{(m)}}}$, which are included in $\clos$.

					\begin{lemma} \label{lem:adapted:from:bay:et:al}
						Consider the setting of Theorem \ref{theorem:extension:convergence:spline}.
						Write 
						\[
						F_m = \{ f \in \Hilb_\clos : 
						\pi_{\subdiv^{(m)}}(f) (x^{(i)}) = y_i , i=1,\ldots,n
						\}.
						\]
						Let $g \in \Hilb_\clos \cap \interpset_{\clos,X,y^{(n)}} $. Then for $m$ large enough we can define $g_m$ by
						\[
						g_m =  \mathrm{argmin}_{h \in F_m}
						|| h - g ||_{\Hilb_\clos}.
						\] 
						Furthermore as $m \to \infty$,
						\[
						|| g_m - g ||_{\Hilb_\clos} \to 0.
						\] 
					\end{lemma}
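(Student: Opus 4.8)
The plan is to recast $F_m$ as a closed affine subspace of the Hilbert space $\Hilb_\clos$ and to identify $g_m$ with the metric projection of $g$ onto it, so that $\|g_m - g\|_{\Hilb_\clos} = \mathrm{dist}(g, F_m)$; the statement then reduces to proving $\mathrm{dist}(g, F_m) \to 0$. First I would observe that, by \eqref{eq:piSm}, the value $\pi_{\subdiv^{(m)}}(f)(x^{(i)})$ is a fixed linear combination of the values of $f$ at the $2^d$ corner knots of the cell of $\subdiv^{(m)}$ containing $x^{(i)}$, all of which lie in $\clos$ by construction; hence $L_i^{(m)} : f \mapsto \pi_{\subdiv^{(m)}}(f)(x^{(i)})$ is a bounded linear functional on $\Hilb_\clos$, with representer $r_i^{(m)} = \sum_{\epsilon} \omega_{i,\epsilon}^{(m)} k_\clos(\cdot, a_{i,\epsilon}^{(m)}) \in \Hilb_\clos$. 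Thus $F_m = \{ f \in \Hilb_\clos : \langle f, r_i^{(m)} \rangle_{\Hilb_\clos} = y_i,\ i=1,\ldots,n \}$ is closed, convex and affine, so once it is nonempty the minimiser $g_m$ exists and is unique.

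Next I would settle nonemptiness and the two convergences that drive the argument. For nonemptiness, Condition~\ref{cond:spans:Rn} gives $\alpha$ with $Y_{\subdiv^{(m_0)},\alpha}(x^{(i)}) = y_i$; since $E_{\subdiv^{(m_0)}} \subseteq E_{\subdiv^{(m)}}$, this function equals $Y_{\subdiv^{(m)},\beta}$ for a suitable $\beta$, and by the assumed invertibility of the kernel matrices one can pick $f \in \Hilb_\clos$ interpolating $\beta$ at the knots of $\subdiv^{(m)}$, whence $\pi_{\subdiv^{(m)}}(f) = Y_{\subdiv^{(m)},\beta}$ and $f \in F_m$; so $F_m \neq \emptyset$ for all $m \geq m_0$. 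For the convergences, since $\subdiv^{(m)}_i$ is dense in $\clos_i$ and each $\clos_i$ is closed, the cell corners $a_{i,\epsilon}^{(m)}$ and the multilinear weights $\omega_{i,\epsilon}^{(m)}$ converge to the neighbours and weights defining $\extfun f(x^{(i)})$ in \eqref{eq:multiExtFormula}; together with the continuity of $k_\clos$ (which makes $a \mapsto k_\clos(\cdot,a)$ continuous into $\Hilb_\clos$) this yields $r_i^{(m)} \to r_i^{(\infty)}$ in $\Hilb_\clos$, where $r_i^{(\infty)}$ represents $f \mapsto \extfun f(x^{(i)})$. Consequently the defects satisfy $e_i^{(m)} := y_i - \langle g, r_i^{(m)} \rangle = \langle g, r_i^{(\infty)} - r_i^{(m)} \rangle \to 0$, using that $g \in \interpset_{\clos,X,y^{(n)}}$ gives $\langle g, r_i^{(\infty)} \rangle = \extfun g(x^{(i)}) = y_i$.

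Finally I would bound the distance. Writing the minimal correction $c_m \in \mathrm{span}(r_1^{(m)},\ldots,r_n^{(m)})$ with $g + c_m \in F_m$, one gets $\mathrm{dist}(g, F_m)^2 = (e^{(m)})^\top (G^{(m)})^{+} e^{(m)}$, where $G^{(m)} = (\langle r_i^{(m)}, r_j^{(m)} \rangle)_{i,j}$ and $(G^{(m)})^{+}$ is the pseudoinverse. The main obstacle is that $G^{(m)}$ can degenerate as $m \to \infty$: if several distinct data points eventually share one shrinking cell (i.e.\ lie in a common cell of the product of the gaps of $\clos$), the representers $r_i^{(m)}$ may tend to linearly dependent limits, and then $(G^{(m)})^{+}$ blows up; indeed a simple two-functional example shows $\mathrm{dist}(g,F_m)$ need not vanish without further structure. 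The key point to establish is therefore that the limit Gram matrix $G^{(\infty)} = (\langle r_i^{(\infty)}, r_j^{(\infty)} \rangle)$ is invertible, equivalently that the functionals $f \mapsto \extfun f(x^{(i)})$ are linearly independent on $\Hilb_\clos$. This I would deduce from Condition~\ref{cond:spans:Rn}: that condition forces the functionals $\alpha \mapsto Y_{\subdiv^{(m_0)},\alpha}(x^{(i)})$ to be independent, hence the multilinear weight vectors of data points sharing a cell of $\subdiv^{(m_0)}$ to be independent; since the gaps of $\clos$ refine the cells of $\subdiv^{(m_0)}$, this independence transfers to the $\clos$-cells, and combined with the strict positive definiteness of $k_\clos$ (making kernel sections at distinct points independent across different cells) it gives independence of the $r_i^{(\infty)}$. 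With $G^{(\infty)}$ invertible, $G^{(m)} \to G^{(\infty)}$ gives $\|(G^{(m)})^{-1}\|$ bounded, and $\mathrm{dist}(g,F_m)^2 \leq \|(G^{(m)})^{-1}\| \, \|e^{(m)}\|^2 \to 0$, completing the proof.
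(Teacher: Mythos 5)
Your overall architecture coincides with the paper's: you encode $F_m$ through the $n$ bounded linear functionals $f \mapsto \pi_{\subdiv^{(m)}}(f)(x^{(i)})$, whose representers are weighted sums of kernel sections at the cell corners (the paper's $\sum_{j'} \lambda_{m,i',j'} k_\clos(\cdot, a_{m,j'})$), you reduce $\|g_m - g\|_{\Hilb_\clos}$ to solving a linear system in the Gram matrix of these representers with right-hand side the interpolation defects, and you obtain invertibility of the limiting Gram matrix from Condition \ref{cond:spans:Rn} exactly as the paper does (surjectivity of the weight map composed with positive definiteness of the kernel matrix at the distinct limit corners). Your non-emptiness argument via the kernel interpolant of $Y_{\subdiv^{(m)},\beta}$ at the knots is a small, clean variant of the paper's.

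The genuine gap is in the step where you justify $r_i^{(m)} \to r_i^{(\infty)}$ (hence $G^{(m)} \to G^{(\infty)}$ and $e^{(m)} \to 0$) by asserting that ``the cell corners $a_{i,\epsilon}^{(m)}$ and the multilinear weights $\omega_{i,\epsilon}^{(m)}$ converge to the neighbours and weights defining $\extfun f(x^{(i)})$''. This is false in general. When $x^{(i)}_j \in \clos_j$, the paper's convention sets $x^{(i),-}_j = x^{(i),+}_j = x^{(i)}_j$ and $\omega_\pm = 1/2$, yet the subdivision knots may approach $x^{(i)}_j$ from one side only: take $\clos_j = [0,1/2]\cup\{1\}$ and $x^{(i)}_j = 1/2$ never a knot; then $x^{(i)}_{m,j,+} = 1$ for all $m$ and $\omega^m_\pm(x^{(i)}_j) \to (1,0) \neq (1/2,1/2)$, so neither the corner nor the weight converges to its counterpart in \eqref{eq:multiExtFormula}. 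The desired limit $\pi_{\subdiv^{(m)}}(f)(x^{(i)}) \to \extfun(f)(x^{(i)})$ is nonetheless true, but only because the terms with vanishing weights drop out while the surviving corners converge; establishing this requires the subsequence extraction and the four-way case split (corners off $\clos_j$, two-sided approach, left-only, right-only) that the paper isolates as Lemma \ref{lem:pi:m:to:ext} --- the most delicate piece of the whole proof, which your argument effectively assumes. The paper then converts this pointwise convergence into Gram-matrix convergence by a further device you would also need a substitute for: it realizes $R_m$ as the covariance of the Gaussian vector $(\pi_{\subdiv^{(m)}}(Z)(x^{(i)}))_i$ for a continuous Gaussian process $Z$ with covariance $k_\clos$, applies Lemma \ref{lem:pi:m:to:ext} pathwise, and invokes the fact that almost-sure convergence of Gaussian vectors implies convergence of their covariances; alternatively you could prove $\|r_i^{(m)} - r_i^{(\infty)}\|_{\Hilb_\clos} \to 0$ directly, but that again reduces to the same case analysis applied to $(u,v)\mapsto k_\clos(u,v)$. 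Supplying this lemma would close your proof.
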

					
					The interpretation of  Lemma \ref{lem:adapted:from:bay:et:al} is that functions in $\Hilb_\clos$ satisfying the $n$ equality constraints are asymptotically well approximated by their projections on $F_m$. The space $F_m$ is the set of functions in $\Hilb_\clos$ that satisfy the $n$ equality constraints, when interpolated through $\pi_{\subdiv^{(m)}}$. Note that this convergence result applies to functions defined on $\clos$ and follows from the density of the knots on $\clos$.
					
					\begin{proof}[{\bf Proof of Lemma \ref{lem:adapted:from:bay:et:al}}]
						We introduce notation for the current subdivision and the current left and right neighbors of $(x^{(i)}_j)_{i=1,\ldots,n,j=1,\ldots,d}$, as in Section \eqref{susubsection:multiaffine:extension}.
						For $i \in \{1 , \ldots , d\}$ and $m \geq m_0$, let us write $x^{(i)}_{m,j,-} = \max \{ u \in \subdiv^{(m)}_j; u \leq x^{(i)}_{j}  \}$ 
						and $x^{(i)}_{m,j,+} = \min \{ u \in \subdiv^{(m)}_j; u \geq x^{(i)}_{j}  \}$.
						Also, if $x^{(i)}_j \not \in \clos_j$,  we write 
						$\omega^m_+(x^{(i)}_j) = \frac{x^{(i)}_j - x^{(i)}_{m,j,-}}{x^{(i)}_{m,j,+} - x^{(i)}_{m,j,-}}$ and $\omega^m_-(x^{(i)}_j) = 1- \omega^m_+(x^{(i)}_j)$. If $x^{(i)}_j  \in \clos_i$,  we write 
						$\omega^m_+(x^{(i)}_j) = \omega^m_-(x^{(i)}_j) = 1/2$.

						We then have, for $i=1,\ldots,n$, for $f \in \mathcal{C}(\clos,\mathbb{R})$,
						\begin{equation} \label{eq:the:sum:with:products}
							\pi_{\subdiv^{(m)}}(f) (x^{(i)})
							=
							\sum_{\epsilon_1, \dots, \epsilon_d \in \{-, +\}} 
							\Bigg(\prod_{j = 1}^d \omega^m_{\epsilon_j}(x^{(i)}_j)  \Bigg)
							f( x^{(i)}_{m,1,\epsilon_1} , \ldots , x^{(i)}_{m,d,\epsilon_d} ),
						\end{equation}
						from \eqref{eq:multiExtFormula} and \eqref{eq:pi:S:f:equal:multiaffine:extension}.
						
						Reindexing by a single index the $n 2^d$ vertices of hypercubes in \eqref{eq:the:sum:with:products},  let us write $q=n 2^d$,
						\[
						\{a_{ m,1 } , \ldots, a_{m,q}\} \subset \clos
						\]
						and, for $i=1,\ldots,n$,
						\[
						\{\lambda_{m,i, 1 } , \ldots, \lambda_{m,i,q}\} \subset \mathbb{R},
						\] 
						such that, for $f \in \mathcal{C}(\clos,\mathbb{R})$,
						\[
						\pi_{\subdiv^{(m)}}(f) (x^{(i)})
						=
						\sum_{j=1}^q
						\lambda_{m,i,j} f(a_{m,j}).
						\] 
						Notice that the $(\lambda_{m,i, j })_{i=1,\ldots,n,j=1,\ldots,q}$ are the products in \eqref{eq:the:sum:with:products} (also reindexed) and are thus between $0$ and $1$.
						Similarly there exist 
						\[
						\{a_{ 1 } , \ldots, a_{q}\} \in \clos
						\]
						and, for $i=1,\ldots,n$,
						\[
						\{\lambda_{i, 1 } , \ldots, \lambda_{i,q}\}
						\] 
						such that, for $f \in \mathcal{C}(\clos,\mathbb{R})$,
						\begin{equation} \label{eq:sum:lambda:f:a}
							\extfun (f) (x^{(i)})
							=
							\sum_{j=1}^q
							\lambda_{i,j} f(a_{j}).
						\end{equation}
						Let us show that, for $m$ large enough, there exist $\gamma_1,\ldots,\gamma_n\in \mathbb{R}$ such that
						\begin{equation} \label{eq:gamma:combination}
							\pi_{S^{(m)}}
							\Bigg(
							\sum_{i'=1}^n 
							\gamma_{i'}
							\Bigg(
							\sum_{j'=1}^q
							\lambda_{m,i',j'}
							k_\clos ( \cdot , a_{m,j'})
							\Bigg)
							\Bigg)
							(x^{(i)})
							=
							y_i
							~ ~,
							i=1,\ldots,n.
						\end{equation}
						This is equivalent to 
						\[
						\sum_{j=1}^q
						\lambda_{m,i,j}
						\Bigg(
						\sum_{i'=1}^n 
						\gamma_{i'}
						\Bigg(
						\sum_{j'=1}^q
						\lambda_{m,i',j'}
						k_\clos ( a_{m,j} , a_{m,j'})
						\Bigg)
						\Bigg)
						=
						y_i
						~ ~,
						i=1,\ldots,n.
						\]
						This is equivalent to 
						\[
						\sum_{i'=1}^n 
						\gamma_{i'}
						\sum_{j=1}^q
						\sum_{j'=1}^q
						\lambda_{m,i,j}
						\lambda_{m,i',j'}
						k_\clos ( a_{m,j} , a_{m,j'})
						=
						y_i
						~ ~,
						i=1,\ldots,n.
						\]
						With $\gamma = (\gamma_1,\ldots,\gamma_n)^\top$, this is equivalent to
						\[
						R_m \gamma = y^{(n)},
						\]
						where $R_m$ is the $n \times n$ matrix with 
						\[
						(R_m)_{i,i'} = \sum_{j=1}^q 
						\sum_{j'=1}^q
						\lambda_{m,i,j}
						\lambda_{m,i',j'}
						k_\clos ( a_{m,j} , a_{m,j'}).
						\]

						Consider a GP $Z$ on $\clos$ with continuous trajectories and covariance function $k_{\clos}$. Note that this exists by taking the restriction to $\clos$ of a GP on $[0,1]^d$ with continuous trajectories and covariance function $k$.
						Then $R_m$ is the covariance matrix of the Gaussian vector
						\begin{equation} \label{eq:vector:Rm}
							\Bigg(
							\sum_{j=1}^q
							\lambda_{m,i,j} 
							Z( a_{m,j} )
							\Bigg)_{i=1,\ldots,n}
							=
							\left(
							\pi_{\subdiv^{(m)}}(Z)
							( x^{(i)} )
							\right)_{i=1,\ldots,n}.
						\end{equation}

						Define $R$ as the covariance matrix of the Gaussian vector
						\begin{equation} \label{eq:vector:R}
							\Bigg(
							\sum_{j=1}^q
							\lambda_{i,j} 
							Z( a_j )
							\Bigg)_{i=1,\ldots,n}
							=
							\left(
							\extfun(Z)
							( x^{(i)} )
							\right)_{i=1,\ldots,n}.
						\end{equation}

						We have that, for all the (continuous) trajectories of $Z$, $\pi_{\subdiv^{(m)}}(Z)$ converges uniformly to $\extfun(Z)$ on $[0,1]^d$ as $m \to \infty$ from Lemma \ref{lem:pi:m:to:ext}. Hence, the Gaussian vector \eqref{eq:vector:Rm} converges almost surely to the Gaussian vector \eqref{eq:vector:R} as $m \to \infty$. Thus, by Gaussianity (see for instance \cite[Lemma 1]{ibragimov1978gaussian}), the covariance matrix of the Gaussian vector \eqref{eq:vector:Rm} converges as $m \to \infty$ to the covariance matrix of the Gaussian vector \eqref{eq:vector:R}. Hence $R_m$ goes to $R$ as $m \to \infty$.

						Let us show that $R$ is invertible.
						Up to a re-arrangement in the sum \eqref{eq:sum:lambda:f:a}, we can assume that $a_1,\ldots,a_q$ are two by two distinct (in this case we have removed duplicates and $q$ can be smaller than $n2^d$).
						From Condition \ref{cond:spans:Rn}, for any $v^{(n)} \in \mathbb{R}^n$, there exists a function $g$ in $E_{\subdiv^{(m_0)}}$ such that 
						\[
						(g(x^{(i)}))_{i=1,\ldots,n}
						=
						v^{(n)}.
						\]
						The function $g$ is equal to $\extfun(g)$ from Proposition \ref{prop:extension:properties}, item 2. Hence, with this function $g$ we have from \eqref{eq:sum:lambda:f:a}
						\[
						\left(
						\extfun(g)
						( x^{(i)} )
						\right)_{i=1,\ldots,n}
						=
						\Bigg(
						\sum_{j=1}^q
						\lambda_{i,j}
						g( a_j )
						\Bigg)_{i=1,\ldots,n}
						=
						v^{(n)}.
						\]
						This means that the application
						\[
						(h_1,\ldots , h_q)
						\mapsto 
						\Bigg(
						\sum_{j=1}^q
						\lambda_{i,j}
						h_j
						\Bigg)_{i=1,\ldots,n}
						\]
						is surjective. 
						In addition, the covariance matrix of $(Z(a_j))_{j=1}^q$ is invertible because $a_1,\ldots,a_q$ are two-by-two distinct. Hence the Gaussian vector 
						\[
						\Bigg(
						\sum_{j=1}^q
						\lambda_{i,j}
						Z(a_j)
						\Bigg)_{i=1,\ldots,n}
						\]
						has an invertible covariance matrix. Its covariance matrix is $R$ which shows that $R$ is invertible.

						Hence, for $m$ large enough, $R_m$ is invertible  and there exist $\gamma_1,\ldots,\gamma_n$ such that \eqref{eq:gamma:combination} holds and thus $F_m$ is non-empty and thus $g_m$ is well-defined (from the classical projection theorem). Let us take $m$ large enough such that $R_m$ is invertible for the rest of the proof.

						We define the spaces
						$G_0^m$ and $G_1^m$, respectively, as 
						\[
						G_0^m = \Bigg\{ f \in \Hilb_\clos : 
						\sum_{j=1}^q
						\lambda_{m,i,j} f(a_{m,j}) = 0 , i=1,\ldots,n
						\Bigg\}
						\]
						and
						\[
						G_1^m
						=
						\mathrm{span}
						\Bigg(
						\sum_{j=1}^q
						\lambda_{m,i,j}
						k_\clos ( \cdot , a_{m,j})
						;
						i=1,\ldots,n
						\Bigg).
						\]

						For arbitrary $f$ in $F_m$, following the proof of Lemma 1 in  \cite{bay2017new}, we have $F_m = f + G_0^m$ and $g_m = f + P_{G_0^m}(g - f)  $, where $P_{G_0^m}$ is the orthogonal projection onto $G_0^m$. Therefore, $g - g_m = g - f - P_{G_0^m}(g - f) \in (G_0^m)^\perp = G_1^m$, where $(G_0^m)^\perp $ is the orthogonal space to $G_1^m$. Then there exist $\beta^m_1 , \ldots , \beta^m_n \in \mathbb{R}$ such that
						\begin{equation} \label{eq:g:minus:gm}
							g - g_m
							=
							\sum_{i'=1}^n
							\beta^m_{i'}
							\sum_{j'=1}^q
							\lambda_{m,i',j'}
							k_\clos ( \cdot , a_{m,j'}).
						\end{equation}
						Hence for $i=1,\ldots,n$ we have
						\begin{align*}
							\Bigg(
							\sum_{j=1}^q
							\lambda_{m,i,j}
							g (a_{m,j})
							\Bigg)
							-
							y_i
							& =
							\sum_{j=1}^q
							\lambda_{m,i,j}
							(g - g_m) (a_{m,j})
							\\
							& =
							\sum_{j=1}^n
							\lambda_{m,i,j}
							\sum_{i'=1}^n
							\beta^m_{i'}
							\sum_{j'=1}^q
							\lambda_{m,i',j'}
							k_\clos ( a_{m,j} , a_{m,j'}).
						\end{align*}
						Hence $\beta^m = (\beta^n_1,\ldots,\beta_n^m)^\top$ is solution of the system
						\[
						R_m \beta_m
						=
						z_m,
						\]
						where 
						\[
						z_m
						=
						\Bigg(
						\sum_{j=1}^q
						\lambda_{m,i,j}
						g(a_{m,j})
						-
						y_i
						\Bigg)_{i=1,\ldots,n}.
						\]
						
						The matrix $R^m$ converges to an invertible matrix as seen before. Furthermore, from Lemma \ref{lem:pi:m:to:ext}, 
						\[
						\sum_{j=1}^q
						\lambda_{m,i,j}
						g(a_{m,j}) = \pi_{\subdiv^{(m)}}(g)(x^{(i)}) \to_{m \to \infty} \extfun(g) (x^{(i)}) = y_i
						\]
						since $g \in \interpset_{\clos,X,y^{(n)}}$. Hence the vector $z_m$ goes to zero. Hence $\beta^m_1 , \ldots , \beta^m_n$ go to zero as $m \to \infty$.   Furthermore, the $\lambda_{m,i,j}$'s are non-negative and bounded by $1$ and $q$ is fixed, hence, from \eqref{eq:g:minus:gm} and the triangle inequality,
						\[
						||g - g_m||_{\Hilb_{\clos}}
						\leq 
						n  q
						\max_{i'=1,\ldots,n} 
						\max_{j'=1,\ldots,q} 
						|\beta^m_{i'}|
						|\lambda_{m,i',j'}|
						\max_{x \in \clos}
						\sqrt{k(x,x)}
						\to_{m \to \infty} 0,
						\]
						which concludes the proof.
						
					\end{proof}

					\begin{lemma} \label{lem:pi:m:to:ext}
						Consider the setting of Theorem \ref{theorem:extension:convergence:spline}.
						Let $f \in \mathcal{C}(\clos , \mathbb{R})$. Then as $m \to \infty$, $\pi_{\subdiv^{(m)}}(f) - \extfun (f) \to 0$, uniformly on $\domain$. 
					\end{lemma}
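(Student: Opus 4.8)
The plan is to reduce the statement to a one-dimensional interpolation estimate that is then tensorized. First I would set $h := \extfun(f) = P_{\clos \to \domain}(f)$, which by Proposition~\ref{prop:extension:properties} (item~1) belongs to $\mathcal{C}(\domain,\R)$, hence is uniformly continuous on the compact $\domain$; I denote its modulus of continuity by $\omega$ and write $\mathrm{osc}(h) \le 2\|h\|_\infty < \infty$. The crucial observation is that $h$ coincides with $f$ on $\clos$, and $\clos$ contains every knot $t^{(\subdiv^{(m)})}_{\multi}$; therefore $\pi_{\subdiv^{(m)}}(f) = \pi_{\subdiv^{(m)}}(h)$, since the projection only reads values of its argument at the knots. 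Thus it suffices to prove that $\pi_{\subdiv^{(m)}}(h) \to h$ uniformly on $\domain$ for $h \in \mathcal{C}(\domain,\R)$.

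Next I would exploit the product structure of $\pi_{\subdiv^{(m)}}$. By \eqref{eq:pi:S:f:equal:multiaffine:extension} together with the factorisation $P = L_1 \cdots L_d$ of Proposition~\ref{prop:multiaffine:extension}, the operator $\pi_{\subdiv^{(m)}}$ equals the composition of the commuting coordinatewise one-dimensional linear interpolations $\pi^{(1)}_m, \dots, \pi^{(d)}_m$ (commutativity is the second item of Remark~\ref{remark:multi:affine:extension}), where $\pi^{(j)}_m$ interpolates in the $j$-th variable using the knots $\subdiv^{(m)}_j \cap [0,1]$. Each $\pi^{(j)}_m$ is linear and, being a pointwise convex combination, is $1$-Lipschitz for $\|\cdot\|_\infty$. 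Writing $\Pi_j = \pi^{(1)}_m \cdots \pi^{(j)}_m$ with $\Pi_0 = \mathrm{Id}$, the telescoping identity
\[
\pi_{\subdiv^{(m)}}(h) - h = \sum_{j=1}^d \Pi_{j-1}\big( \pi^{(j)}_m - \mathrm{Id}\big) h
\]
and the operator bound $\|\Pi_{j-1}\|_{\mathrm{op}} \le 1$ give $\big\|\pi_{\subdiv^{(m)}}(h) - h\big\|_\infty \le \sum_{j=1}^d \big\| (\pi^{(j)}_m - \mathrm{Id})h \big\|_\infty$. It remains to show that each summand tends to $0$.

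For a fixed coordinate $j$ I would prove the one-dimensional estimate uniformly over the other variables. For each frozen $t_{\sim j}$, the cut $u \mapsto h(u, t_{\sim j})$ has modulus of continuity at most $\omega$ (uniformly in $t_{\sim j}$) and, by Proposition~\ref{prop:multiaffine:extension}, is affine on every interval of $[0,1]\setminus\clos_j$. Since the subdivisions are nested and $\bigcup_{m} \subdiv^{(m)}_j$ is dense in the compact $\clos_j$, a standard compactness argument yields, for every $\delta>0$, an index $M$ such that $\subdiv^{(m)}_j$ is $\delta/2$-dense in $\clos_j$ for $m \ge M$. Fix $\eta>0$, pick $\delta_1$ with $\omega(\delta_1) < \eta$, and then pick $\delta \in (0,\delta_1/2)$ to be fixed later. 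For $t$ with consecutive enclosing knots $t_m^- \le t_j \le t_m^+$ there are two cases. If $t_m^+ - t_m^- \le \delta_1$, then since $\pi^{(j)}_m(h)(t)$ is a convex combination of two values of $h$ at points differing from $t$ by at most $t_m^+-t_m^-$ in coordinate $j$, the error is at most $\omega(\delta_1) < \eta$. Otherwise $t_m^+ - t_m^- > \delta_1$; the $\delta/2$-density forces $(t_m^- + \delta/2,\, t_m^+ - \delta/2)\cap \clos_j = \emptyset$, so there is a gap $(a^\ast,b^\ast)\supseteq (t_m^-+\delta/2, t_m^+-\delta/2)$ of $\clos_j$ with $a^\ast \in [t_m^-, t_m^-+\delta/2]$ and $b^\ast \in [t_m^+-\delta/2, t_m^+]$, on which the cut of $h$ is affine. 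Comparing the interpolating chord on $[t_m^-,t_m^+]$ with this affine piece, using $|h(t_m^\pm,\cdot) - h(a^\ast\text{ or }b^\ast,\cdot)| \le \omega(\delta/2)$, $b^\ast-a^\ast > \delta_1/2$ and the resulting slope bound $\le 2\,\mathrm{osc}(h)/\delta_1$, one bounds the error by $\omega(\delta/2) + C\,\mathrm{osc}(h)\,\delta/\delta_1$; choosing $\delta$ small makes this $<\eta$. The main obstacle is precisely this second case, where consecutive knots stay far apart yet the cell still meets $\clos_j$: there the cut may have large slope across the enclosed gap, and only the two-scale argument (continuity at scale $\delta_1$, density at the finer scale $\delta$) combined with the affineness on the gap and the bounded oscillation of $h$ controls it. Since $M$, $\omega$ and $\mathrm{osc}(h)$ are independent of $t_{\sim j}$, this yields $\|(\pi^{(j)}_m - \mathrm{Id})h\|_\infty < \eta$ for $m \ge M$, and summing over $j$ completes the proof.
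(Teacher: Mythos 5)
Your proof is correct, but it takes a genuinely different route from the paper's. The paper argues by contradiction: it extracts a convergent sequence $x^{(m)} \to x^{(\infty)}$ along which the discrepancy would stay above $\epsilon$, partitions the coordinates into four classes according to whether $x^{(\infty)}_j$ lies in $\clos_j$ and which of the neighbors $x^{(m)}_{m,j,\pm}$ converge to $x^{(\infty)}_j$, and then compares the two $2^d$-term tensor-product sums from \eqref{eq:multiExtFormula} term by term until they cancel. You instead replace $f$ by its continuous extension $h=\extfun(f)$ (legitimate, since $\pi_{\subdiv^{(m)}}$ only reads values at knots, all of which lie in $\clos$), factor $\pi_{\subdiv^{(m)}}$ into commuting one-dimensional interpolation operators, telescope using their $1$-Lipschitz property, and prove a quantitative one-dimensional error estimate via a two-scale argument: cells with small diameter are handled by the modulus of continuity of $h$, while wide cells must (by $\delta/2$-density of the knots in $\clos_j$) essentially coincide with a gap of $\clos_j$, on which $h$ is affine by Proposition~\ref{prop:multiaffine:extension}, so the chord is close to the true affine piece. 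This buys an explicit error bound in terms of the modulus of continuity of $\extfun(f)$ and the fill distance of $\subdiv^{(m)}_j$ in $\clos_j$, and it isolates cleanly the single point where non-density of the knots matters; the paper's softer subsequence argument avoids any quantitative bookkeeping but requires the four-way case analysis. One phrasing slip worth fixing: your sentence ``it suffices to prove that $\pi_{\subdiv^{(m)}}(h)\to h$ uniformly for $h\in\mathcal{C}(\domain,\R)$'' is false for a general continuous $h$ when $\clos\neq\domain$; your argument in fact only establishes (and only needs) the convergence for the specific $h=\extfun(f)$, whose cuts are affine on the intervals of $[0,1]\setminus\clos_j$, so the reduction should be stated for that $h$ alone.
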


					\begin{remark} \label{rem:continuity:F:to:ext:F}
						From Remark \ref{remark:multi:affine:extension}, we know that 
						\[
						\pi_{\subdiv^{(m)}}(f) = 
						P_{ \clos_{\subdiv^{(m)}} \to [0,1]^d} (f),
						\]
						where $\clos_{\subdiv^{(m)}} =  \prod_{j=1}^d ( \subdiv_j^{(m)} \cap [0,1] )$. The set  $\clos_{\subdiv^{(m)}}$ is contained in $\clos$ and converges to $\clos$, for the Hausdorff distance, as $m \to \infty$. Hence, Lemma \ref{lem:pi:m:to:ext} is a continuity property of the application $\clos' \mapsto P_{ \clos' \to [0,1]^d}(f)  $, for fixed $f$, with respect to $\clos'$ and for the Hausdorff distance.
					\end{remark}

					\begin{proof}[{\bf Proof of Lemma \ref{lem:pi:m:to:ext}}]
						Let $f \in \mathcal{C}(\clos , \mathbb{R})$. Assume that as $m \to \infty$, $\pi_{\subdiv^{(m)}}(f) - \extfun (f)$ does not go to zero. Then, up to extracting subsequences and by compacity of $[0,1]^d$, there exist $\epsilon >0$ and a sequence $(x^{(m)})_{m \geq m_0}$ converging to $x^{(\infty)} \in \domain$ such that
						\begin{equation} \label{eq:to:be:contradicted:pi:minus:ext}
							\left| \pi_{\subdiv^{(m)}}(f)(x^{(m)}) - \extfun (f)(x^{(m)}) \right| \geq \epsilon.
						\end{equation}
						Let us now contradict \eqref{eq:to:be:contradicted:pi:minus:ext}.
						Similarly to the proof of Lemma \ref{lem:adapted:from:bay:et:al}, we introduce notation for the current subdivision and the current left and right neighbors of $x^{(m)}_1 , \ldots , x^{(m)}_d$.
						For $j \in \{1 , \ldots , d\}$ and $m \geq m_0$, let us write $x^{(m)}_{m,j,-} = \max \{ u \in \subdiv^{(m)}_j; u \leq x^{(m)}_{j}  \}$ 
						and $x^{(m)}_{m,j,+} = \min \{ u \in \subdiv^{(m)}_j; u \geq x^{(m)}_{j}  \}$.
						Also, if $x^{(m)}_j \not \in \clos_j$,  we write 
						$\omega^m_+(x^{(m)}_j) = \frac{x^{(m)}_j - x^{(m)}_{m,j,-}}{x^{(m)}_{m,j,+} - x^{(m)}_{m,j,-}}$ and $\omega^m_-(x^{(m)}_j) = 1- \omega^m_+(x^{(m)}_j)$. If $x^{(m)}_j  \in \clos_j$,  we write 
						$\omega^m_+(x^{(m)}_j) = \omega^m_-(x^{(m)}_j) = 1/2$.

						We then have
						\[
						\pi_{\subdiv^{(m)}}(f) (x^{(m)})
						=
						\sum_{\epsilon_1, \dots, \epsilon_d \in \{-, +\}} 
						\Bigg(\prod_{j = 1}^d \omega^m_{\epsilon_j}(x^{(m)}_j)  \Bigg)
						f( x^{(m)}_{m,1,\epsilon_1} , \ldots , x^{(m)}_{m,d,\epsilon_d} ),
						\]
						from \eqref{eq:multiExtFormula} and \eqref{eq:pi:S:f:equal:multiaffine:extension}.
						Up to extracting subsequences, we can partition $\{1,\ldots,d\}$ as $J_1 \cup J_2 \cup J_3 \cup J_4$ (where some of the four sets are possibly empty) where 
						\begin{itemize}
							\item for $j \in J_1$ we have $x^{(\infty)}_j \not \in \clos_j$,
							\item  for $j \in J_2$ we have $x^{(\infty)}_j \in \clos_j$, $x^{(m)}_{m,j,-} \to x^{(\infty)}_{j}$ and $x^{(m)}_{m,j,+} \to x^{(\infty)}_{j}$, 
							\item for $j \in J_3$  we have $x^{(\infty)}_j \in \clos_j$, $x^{(m)}_{m,j,-} \to x^{(\infty)}_{j}$ and $\liminf | x^{(m)}_{m,j,+} - x^{(\infty)}_{j} | >0$,
							\item and for $j \in J_4$  we have $x^{(\infty)}_j \in \clos_j$,  $\liminf | x^{(m)}_{m,j,-} - x^{(\infty)}_{j} | >0$ and $x^{(m)}_{m,j,+}  \to x^{(\infty)}_{j}$.
						\end{itemize}
						Note that we can indeed find this partition, after extraction of subsequences, because for $x^{(\infty)}_j \in \clos_j$, we have $\liminf |x^{(m)}_{m,j,-} - x^{(\infty)}_{j} | = 0$ or $\liminf |x^{(m)}_{m,j,+} - x^{(\infty)}_{j} | = 0$.
						Up to re-indexing and without loss of generality, we assume that $J_1 = \{1,\ldots,j_1\}$, $J_2 = \{j_1+1,\ldots,j_2\}$, $J_3 = \{j_2+1,\ldots,j_3\}$ and $J_4 = \{j_3+1,\ldots,d\}$.

						We have
						\begin{align} \label{eq:two:sums:piSm:minus:ext}
							& \left|
							\pi_{\subdiv^{(m)}}(f) (x^{(m)})
							-
							\extfun (f)(x^{(m)})
							\right|
							\\
							& = 
							\Bigg|
							\sum_{\epsilon_1, \dots, \epsilon_d \in \{-, +\}} 
							\Bigg(\prod_{j = 1}^d \omega^m_{\epsilon_j}(x^{(m)}_j)  \Bigg)
							f( x^{(m)}_{m,1,\epsilon_1} , \ldots , x^{(m)}_{m,d,\epsilon_d} )
							\notag
							\\
							& ~ ~ -
							\sum_{\epsilon_1, \dots, \epsilon_d \in \{-, +\}} 
							\Bigg(\prod_{j = 1}^d \omega_{\epsilon_j}(x^{(m)}_j)  \Bigg)
							f( x^{(m)}_{1,\epsilon_1} , \ldots , x^{(m)}_{d,\epsilon_d} )
							\Bigg|. \notag
						\end{align}

						In the above display, for $j \in J_{3}$, then $ \liminf | x^{(m)}_{m,j,+} -  x^{(\infty)}_{j} | >0$ and $ x^{(m)}_{m,j,-} \to  x^{(\infty)}_{j}$. 
						Hence one can see that if $\epsilon_j = +$, $w^m_{\epsilon_j} (x_j^{(m)}) \to 0$. Similarly, for $j \in J_4$, if $\epsilon_j = -$, then $w^m_{\epsilon_j} (x_j^{(m)}) \to 0$.

						Similarly, for $j \in J_3$, if $\epsilon_j = +$, then $w_{\epsilon_j} (x_j^{(m)}) \to 0$ and for $j \in J_4$, if $\epsilon_j = -$, then $w_{\epsilon_j} (x_j^{(m)}) \to 0$.

						Hence, in the two sums in \eqref{eq:two:sums:piSm:minus:ext}, if $(\epsilon_1,\ldots,\epsilon_d)$ is such that $\epsilon_j = +$ for $j \in J_3$, or $\epsilon_j = -$ for $j \in J_4$, then the corresponding summand goes to zero.
						As a consequence we have
						\begin{align*}
							& \left|
							\pi_{\subdiv^{(m)}}(f) (x^{(m)})
							-
							\extfun (f)(x^{(m)})
							\right|
							\\
							& = 
							o(1)
							+
							\\
							&
							\Bigg|
							\sum_{\epsilon_1, \dots, \epsilon_{j_2} \in \{-, +\}} 
							\Bigg(\prod_{j = 1}^{j_2} \omega^m_{\epsilon_j}(x^{(m)}_j)  \Bigg)
							f( x^{(m)}_{m,1,\epsilon_1} , \ldots , x^{(m)}_{m,j_2,\epsilon_{j_2}},
							\\
							&
							\hspace{6cm}
							x^{(m)}_{m,j_2+1,-},\ldots,x^{(m)}_{m,j_3,-} ,  x^{(m)}_{m,j_3+1,+},\ldots,x^{(m)}_{m,j_4,+})
							\Bigg.
							\\
							&
							\Bigg.
							-
							\sum_{\epsilon_1, \dots, \epsilon_{j_2} \in \{-, +\}} 
							\Bigg(\prod_{j = 1}^{j_2} \omega_{\epsilon_j}(x^{(m)}_j)  \Bigg)
							f( x^{(m)}_{1,\epsilon_1} , \ldots , x^{(m)}_{j_2,\epsilon_{j_2}},x^{(m)}_{j_2+1,-},\ldots,x^{(m)}_{j_3,-} ,  x^{(m)}_{j_3+1,+},\ldots,x^{(m)}_{j_4,+})
							\Bigg|.
						\end{align*}

						For $j \in \{j_1+1 , \ldots , j_2\}$, we have $x^{(\infty)}_j \in \clos_j$, $x^{(m)}_{m,j,-} \to x^{(\infty)}_{j}$ and $x^{(m)}_{m,j,+} \to x^{(\infty)}_{j}$. As a consequence, because $ x^{(m)}_{m,j,-} \leq x^{(m)}_{j,-} \leq x^{(m)}_{j} \leq x^{(m)}_{j,+} \leq x^{(m)}_{m,j,+}$, also $x^{(m)}_{j,-} \to x^{(\infty)}_{j}$ and $x^{(m)}_{j,+} \to x^{(\infty)}_{j}$.
						Similarly, for $j \in \{j_2+1 , \ldots , j_3\}$, we have $x^{(m)}_{m,j,-} \to x^{(\infty)}_{j}$ and so also $x^{(m)}_{j,-} \to x^{(\infty)}_{j}$. Similarly, for $j \in \{j_3+1 , \ldots , d\}$, we have $x^{(m)}_{m,j,+} \to x^{(\infty)}_{j}$ and so also $x^{(m)}_{j,+} \to x^{(\infty)}_{j}$.

						This yields, by continuity of $f$,
						\begin{align*}
							& \left|
							\pi_{\subdiv^{(m)}}(f) (x^{(m)})
							-
							\extfun (f)(x^{(m)})
							\right|
							\\
							& = 
							o(1)
							+
							\Bigg|
							\sum_{\epsilon_1, \dots, \epsilon_{j_2} \in \{-, +\}} 
							\Bigg(\prod_{j = 1}^{j_2} \omega^m_{\epsilon_j}(x^{(m)}_j)  \Bigg)
							\\
							&
							f( x^{(m)}_{m,1,\epsilon_1} , \ldots , x^{(m)}_{m,j_1,\epsilon_{j_1}},
							x^{(\infty)}_{j_1+1} , \ldots , x^{(\infty)}_{j_2},
							x^{(\infty)}_{j_2+1},\ldots,x^{(\infty)}_{j_3} ,  x^{(\infty)}_{j_3+1},\ldots,x^{(\infty)}_{j_4})
							\\
							&
							-
							\sum_{\epsilon_1, \dots, \epsilon_{j_2} \in \{-, +\}} 
							\Bigg(\prod_{j = 1}^{j_2} \omega_{\epsilon_j}(x^{(m)}_j)  \Bigg)
							\\
							&
							f( x^{(m)}_{1,\epsilon_1} , \ldots , x^{(m)}_{j_1,\epsilon_{j_1}},
							x^{(\infty)}_{j_1+1} , \ldots , x^{(\infty)}_{j_2},
							x^{(\infty)}_{j_2+1},\ldots,x^{(\infty)}_{j_3} ,  x^{(\infty)}_{j_3+1},\ldots,x^{(\infty)}_{j_4})
							\Bigg|.
						\end{align*}

						In the above display, for the first sum, we can isolate 
						$$
						\sum_{\epsilon_{j_1+1}, \dots, \epsilon_{j_2} \in \{-, +\}} 
						(\prod_{j = j_1+1}^{j_2} \omega^m_{\epsilon_j}(x^{(m)}_j) ) = 1,
						$$
						because the argument of $f$ does not depend on $\epsilon_{j_1+1}, \dots, \epsilon_{j_2}$. We can proceed similarly with the second sum.
						This yields
						\begin{align*}
							& \left|
							\pi_{\subdiv^{(m)}}(f) (x^{(m)})
							-
							\extfun (f)(x^{(m)})
							\right|
							\\
							& = 
							o(1)
							+
							\Bigg|
							\sum_{\epsilon_1, \dots, \epsilon_{j_1} \in \{-, +\}} 
							\Bigg(\prod_{j = 1}^{j_1} \omega^m_{\epsilon_j}(x^{(m)}_j)  \Bigg)
							\\
							&
							f( x^{(m)}_{m,1,\epsilon_1} , \ldots , x^{(m)}_{m,j_1,\epsilon_{j_1}},
							x^{(\infty)}_{j_1+1} , \ldots , x^{(\infty)}_{j_2},
							x^{(\infty)}_{j_2+1},\ldots,x^{(\infty)}_{j_3} ,  x^{(\infty)}_{j_3+1},\ldots,x^{(\infty)}_{j_4})
							\\
							&
							-
							\sum_{\epsilon_1, \dots, \epsilon_{j_1} \in \{-, +\}} 
							\Bigg(\prod_{j = 1}^{j_1} \omega_{\epsilon_j}(x^{(m)}_j)  \Bigg)
							\\
							&
							f( x^{(m)}_{1,\epsilon_1} , \ldots , x^{(m)}_{j_1,\epsilon_{j_1}},
							x^{(\infty)}_{j_1+1} , \ldots , x^{(\infty)}_{j_2},
							x^{(\infty)}_{j_2+1},\ldots,x^{(\infty)}_{j_3} ,  x^{(\infty)}_{j_3+1},\ldots,x^{(\infty)}_{j_4})
							\Bigg|.
						\end{align*}

						Now for $j=1,\ldots,j_1$, we have $x^{(\infty)}_{j,-} < x^{(\infty)}_j < x^{(\infty)}_{j,+}$. Hence by continuity, we have
						\begin{align*}
							& \left|
							\pi_{\subdiv^{(m)}}(f) (x^{(m)})
							-
							\extfun (f)(x^{(m)})
							\right|
							\\
							& = 
							o(1)
							+
							\Bigg|
							\sum_{\epsilon_1, \dots, \epsilon_{j_1} \in \{-, +\}} 
							\Bigg(\prod_{j = 1}^{j_1} \omega_{\epsilon_j}(x^{(\infty)}_j)  \Bigg)
							\\
							&
							f( x^{(\infty)}_{1,\epsilon_1} , \ldots , x^{(\infty)}_{j_1,\epsilon_{j_1}},
							x^{(\infty)}_{j_1+1} , \ldots , x^{(\infty)}_{j_2},
							x^{(\infty)}_{j_2+1},\ldots,x^{(\infty)}_{j_3} ,  x^{(\infty)}_{j_3+1},\ldots,x^{(\infty)}_{j_4})
							\\
							&
							-
							\sum_{\epsilon_1, \dots, \epsilon_{j_1} \in \{-, +\}} 
							\Bigg(\prod_{j = 1}^{j_1} \omega_{\epsilon_j}(x^{(\infty)}_j)  \Bigg)
							\\
							&
							f( x^{(\infty)}_{1,\epsilon_1} , \ldots , x^{(\infty)}_{j_1,\epsilon_{j_1}},
							x^{(\infty)}_{j_1+1} , \ldots , x^{(\infty)}_{j_2},
							x^{(\infty)}_{j_2+1},\ldots,x^{(\infty)}_{j_3} ,  x^{(\infty)}_{j_3+1},\ldots,x^{(\infty)}_{j_4})
							\Bigg|.
						\end{align*}
						
						Notice that the two sums above are the same. Hence 
						\[
						\left|
						\pi_{\subdiv^{(m)}}(f) (x^{(m)})
						-
						\extfun (f)(x^{(m)})
						\right|
						=
						o(1).
						\]
						This is in contradiction with \eqref{eq:to:be:contradicted:pi:minus:ext} which concludes the proof.
					\end{proof}
					
					%%%%%%%%%%%%%%%%%%%%%%%%%%%%%%%%%%%%%%%%%%%%%%%%%%%%%%%%%%%%%%%
					\begin{proof}[{\bf Proof of Theorem \ref{theorem:convergence}}]
						%%%%%%%%%%%%%%%%%%%%%%%%%%%%%%%%%%%%%%%%%%%%%%%%%%%%%%%%%%%%%%%
						
						Since the sequence of sets $\activeset_m$ is nested, it is equal to a set $\activeset_{\infty}$ for $m$ larger than some $m_0 \in \mathbb{N}$. Let us consider $m \geq m_0$ for the rest of the proof. Let $d = |\activeset_{\infty}|$. Then, with the set of variables $\activeset_{\infty}$, and the sequence of subdivisions $\left(\subdiv^{(m)}\right)_{m \geq m_0}$ in $\subdivset_{\activeset_{\infty}}$, we can check that the conditions of Theorem \ref{theorem:convergence} imply the conditions of Theorem \ref{theorem:extension:convergence:spline}.
						
						Hence, from Theorem \ref{theorem:extension:convergence:spline}, as $m \to \infty$, 
						the sequence of functions $\widehat{Y}_{\activeset_m,\subdiv^{(m)}}$ 
						converges uniformly to a limit function. Hence, as $m \to \infty$, we have, 
						\[
						I_{\activeset_m, S^{(m)}}(i^\star_{m+1},t^\star_{m+1})
						\to 0.
						\]
						
						Assume now that there exists $i \in \activeset_{\infty}$ such that $S^{(m)}_i$ is not dense in $[0,1]$. 
						As for $t_i \in [0,1]$, the sequence $d ( t_i , S^{(m)}_i )$ is decreasing (with respect to $m$) and thus has a limit, 
						this implies that there exists $\epsilon >0$ and $t_i \in [0,1]$ 
						such that $d ( t_i , S^{(m)}_i ) \geq \epsilon$ for all $m \in \mathbb{N}$. 
						This means that for $m$ large enough such that $b_m \leq \epsilon$,
						\[
						\underset{
							\substack{
								i \in \activeset_{m}, \, t \in [0,1], \\
								d(t , \subdiv^{(m)}_i) \geq b_m
							}
						}{\sup}
						\left(
						I_{\activeset_m, S^{(m)}}(i ,t) + \Delta d(t, \subdiv^{(m)}_{i})
						\right)
						\geq \Delta \epsilon.
						\]
						As $m \geq m_0$, then $i^\star_{m+1} \in \activeset_m = \activeset_\infty$. Thus, by definition of algorithm \ref{alg:iterative},
						\[
						I_{\activeset_m, S^{(m)}}(i^\star_{m+1} ,t^\star_{m+1}) + \Delta \, d\big(t^\star_{m+1}, \subdiv^{(m)}_{i^\star_{m+1}} \big) + a_m
						\geq \Delta \epsilon.
						\]
						Hence, we will reach a contradiction if we show that $
						\liminf_{m \to \infty} d(t^\star_{m+1}, \subdiv^{(m)}_{i^\star_{m+1}}) = 0$. 
						There is at least one coordinate  $i \in \activeset_m$ chosen an infinite number of times by MaxMod algorithm.
						%Let $i \in \{1,\ldots,d\}$ be that coordinate. 
						%be such that $i^\star_{m+1} = i$ for an infinite number of values of $m$. 
						Let $(m_\ell)_{\ell \in \mathbb{N}}$ be the corresponding subsequence of values of $m$, i.e. for which $i^\star_{m+1} = i$. 
						Then we have $\{ t^\star_{m_1+1} , \ldots , t^{\star}_{m_{\ell}+1} \} \subseteq S^{(m_{\ell}+1)}_{i} $ 
						and thus 
						$d \big( t^\star_{m_\ell+1},  S^{(m_{\ell}+1)}_{i}  \big) \leq 
						d \big( t^\star_{m_\ell+1} , \{t^{\star}_{m_{1}} , \ldots , t^{\star}_{m_{\ell}} \} \big)$. 
						The limit inferior of this last quantity is zero, 
						by considering a convergent subsequence of $(t^{\star}_{m_{\ell}})_\ell$ in the compact interval $[0, 1]$.
						%This last quantity goes to zero since for all sequence $(u_i)_{i \in \mathbb{N}}$ with $u_i \in [0,1]$ we have $d(u_i , \{u_1 , \ldots , u_{i-1}\}) \to 0$ as $i \to \infty$. 
						%Indeed, assume that for some $\delta>0$ there is an infinite number of $i$ such that $d(u_i , \{u_1 , \ldots , u_{i-1}\}) > \delta$. Let $(v_j)_{j \in \mathbb{N}}$ be the sequence of corresponding  $u_i$. Then we have $d(v_j , \{v_1,\ldots,v_{j-1}\}) \geq \delta$ for $j \in \mathbb{N}$. This means that the points $(v_{j})_{j \in \mathbb{N}}$ are $\delta$ apart from each other and in $[0,1]$ which is impossible. 
						Hence, we have reached a contradiction and, eventually,  for all $i \in \{1 , \ldots , d\}$, $S^{(m)}_i$ is dense in $[0,1]$.
						
						Let us now assume that $d < D$. Let us then consider $j_{\infty} \in \{1 , \ldots , D\} \backslash \activeset$. We then have by definition of the MaxMod algorithm, for $m$ larger than $m_0$,
						\[
						I_{\activeset_m,S^{(m)}}(i^\star_{m+1} ,t^\star_{m+1}) + \Delta \, d\big(t^\star_{m+1}, \subdiv^{(m)}_{i^\star_{m+1}} \big) + a_m
						\geq 
						\Delta',
						\] 
						by considering $i = j_\infty \not \in \activeset_{\infty}$ in \eqref{eq:max:mod}.
						The left hand side of the above display goes to zero, because for $i \in \activeset_{\infty}$, $S_i^{(m)}$ is dense in $[0,1]$, as we have shown and also because 
						\[
						I_{\activeset_m, S^{(m)}}(i^\star_{m+1} ,t^\star_{m+1}) \to 0 
						\]
						as we have shown.
						This yields a contradiction. Hence $\activeset_m = \{1 , \ldots , D\}$ for $m \geq m_0$.
						
						From the fact that all the variables are eventually active and from the density of $S_i^{(m)}$ for $i \in \{1 , \ldots , D\}$, we conclude from Theorem
						\ref{theorem:convergence:spline} (Theorem 3.2 in \cite{bay2017new}).
					\end{proof}

\bibliographystyle{abbrv}
\bibliography{Biblio}

\end{document}